\numberwithin{equation}{section}
\newtheorem{theorem}{Theorem}[section]
\newtheorem{lemma}[theorem]{Lemma}
\newtheorem{corollary}[theorem]{Corollary}
\theoremstyle{definition}\newtheorem{definition}[theorem]{Definition}
\newtheorem{conjecture}[theorem]{Conjecture}
\theoremstyle{definition}
\theoremstyle{definition}
\theoremstyle{definition}\newtheorem{remark}[theorem]{Remark}
\theoremstyle{definition}\newtheorem*{acknowledgments}{Acknowledgments}
\newcommand{\al}{\alpha}
\newcommand{\be}{\beta}
\newcommand{\ga}{\gamma}
\newcommand{\Ga}{\Gamma}
\newcommand{\del}{\delta}
\newcommand{\lam}{\lambda}
\newcommand{\Lam}{\Lambda}
\newcommand{\eps}{\epsilon}
\newcommand{\sig}{\sigma}
\newcommand{\om}{\omega}
\newcommand{\Om}{\Omega}
\newcommand{\vphi}{\varphi}
\newcommand{\cO}{\mathcal{O}}
\newcommand{\bC}{\mathbb{C}}
\newcommand{\bR}{\mathbb{R}}
\newcommand{\bZ}{\mathbb{Z}}
\newcommand{\bQ}{\mathbb{Q}}
\newcommand{\bT}{\mathbb{T}}
\newcommand{\goa}{\mathfrak{a}}
\newcommand{\gog}{\mathfrak{g}}
\newcommand{\goh}{\mathfrak{h}}
\newcommand{\gol}{\mathfrak{l}}
\newcommand{\gou}{\mathfrak{u}}
\newcommand{\SL}{\operatorname{SL}}
\newcommand{\GL}{\operatorname{GL}}
\newcommand{\re}{\operatorname{Re}}
\newcommand{\im}{\operatorname{Im}}
\newcommand{\lie}{\operatorname{Lie}}
\newcommand\norm[1]{\left\|#1\right\|}
\newcommand\set[1]{\left\{#1\right\}}
\newcommand\pa[1]{\left(#1\right)}
\newcommand\idist[1]{\langle#1\rangle}
\newcommand\av[1]{\left|#1\right|}
\newcommand\on[1]{\operatorname{#1}}
\newcommand\diag[1]{\operatorname{diag}\left(#1\right)}
\newcommand{\onto}{\xymatrix{\ar@{>>}[r]&}}
\newcommand{\da}[4]{\xymatrix{#1 \ar@<.5ex>[r]^{#2} \ar@<-.5ex>[r]_{#3} & #4}}
\begin{document}
\title{Homogeneous orbit closures and applications}
\author{Elon Lindenstrauss and Uri Shapira}
\thanks{E.L. was partially supported by NSF grants DMS-0554345 and DMS-0800345 and ISF grant 983/09. U.S. was partially supported by ISF grants 983/09 and 1157/08}
\dedicatory{In memory of Dan Rudolph}
\begin{abstract}
We give new classes of examples of orbits of the diagonal group in the space of unit volume lattices in $\bR ^ d$ for $d \geq 3$ with nice (homogeneous) orbit closures, as well as examples of orbits with explicitly computable but irregular orbit closures. We give Diophantine applications to the former, for instance we show that for all $\ga,\del \in\bR$
$$\liminf_{\av{n}\to \infty}\av{n}\idist{n \sqrt[3]{2}-\ga}\idist{n \sqrt[3]{4}-\del}=0,$$
where $\idist{c}$ denotes the distance of a real number $c$ to the integers.
\end{abstract}

\maketitle
\section{Introduction and results}
Let $G$ be a Lie group and $\Ga<G$ be a closed subgroup. The space $X=G/\Ga$ is a \textit{homogeneous space} on which $G$ acts transitively by left multiplication. In homogeneous dynamics one studies the action of a closed subgroup,
$H<G$, on $X$. One of the basic questions one can ask is to analyze orbit closures, $\overline{Hx}$, for various points $x\in X$. We will shortly restrict our discussion to a specific example, having number theoretic applications in mind, but for the meantime, let us make the following definitions:
\begin{definition}\label{regular points}
\begin{enumerate}
  \item An $H$-orbit $Hx$ is \emph{periodic} if $Hx$ supports an $H$-invariant probability measure.
  \item An $H$-orbit $Hx$ is  \emph{$H$-regular} if $\overline{Hx}=Lx$ for some closed subgroup $H<L<G$.
\item An $H$-orbit is \emph{$H$-regular of periodic type} if furthermore $Lx$ is a periodic $L$-orbit.
\end{enumerate}
   A point $x$ is said to be $H$-periodic, $H$-regular or $H$-regular of periodic type if the corresponding $H$-orbit $Hx$ has this property.
\end{definition}

A simple situation where every point is $H$-regular is given by the action of a closed subgroup $H<\bR^d$ on the torus $\bT^d=\bR^d/\bZ^d$.
It is well known that in this situation any point $x\in \bT^d$ is $H$-regular of periodic type.
Moreover, the commutativity of $\bR^d$ implies
that the group $L$ which satisfies $Lx=\overline{Hx}$ does not depend on $x$.
A much deeper theorem ensuring such regularity is the following fundamental result of M. Ratner (see~\cite{R-Duke} Theorems A and B):
\begin{theorem}[Ratner's Orbit Closure Theorem]
Assume $\Ga<G$ is a lattice and $H<G$ a closed subgroup generated by one parameter
unipotent subgroups of $G$. Then any point $x\in G/\Ga$ is $H$-regular of periodic type.
\end{theorem}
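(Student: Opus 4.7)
The plan is to follow Ratner's original route: first establish the \emph{measure classification theorem}, which asserts that every ergodic $H$-invariant probability measure on $G/\Ga$ is the $L$-invariant Haar measure on a periodic orbit $Lx$ for some closed subgroup $H<L<G$, and then deduce the orbit closure statement by combining measure classification with the Dani--Margulis non-divergence theorem. As a preliminary reduction, since $H$ is assumed to be generated by one-parameter unipotent subgroups, I would first treat the case that $H=\set{u_t}$ is a single such subgroup and then propagate the conclusion to the group they generate.

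To pass from measure classification to the orbit closure statement, fix $x\in G/\Ga$ and set $Y=\overline{Hx}$. By Dani--Margulis non-divergence, for every $\veps>0$ there is a compact $K\subset G/\Ga$ such that the orbit $\set{u_t x}_{t\in[0,T]}$ spends at least a $(1-\veps)$-fraction of its time in $K$; hence the empirical measures $\mu_T=\frac1T\int_0^T\del_{u_tx}\,dt$ are tight, and any weak-$*$ limit $\mu$ is an $H$-invariant probability measure supported in $Y$. Ergodically decomposing $\mu$ and applying measure classification, almost every component is Haar measure of a periodic $L_\al$-orbit. A linearization argument, controlling the set of times at which $u_tx$ visits neighborhoods of exceptional lower-dimensional periodic orbits (and using countability of the candidate subgroups $L$), upgrades this to the statement that $Y$ itself is a single periodic $L$-orbit.

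The core of the proof is the measure classification theorem. Given an ergodic $H$-invariant probability $\mu$, let $\Sig=\set{g\in G:g_*\mu=\mu}$; the aim is to show that $\on{supp}\mu$ is a single $\Sig^\circ$-orbit. The key input is the \emph{polynomial divergence} of unipotent flows: for $v\in\lie(G)$ transverse to $\lie(\Sig)$, the quantity $\on{Ad}(u_t)v$ is a polynomial in $t$, so the $H$-orbits of two nearby $\mu$-generic points $x$ and $y=\exp(v)x$ drift apart along a predictable transverse direction. Choosing $t$ so that $\on{Ad}(u_t)v$ has unit size and comparing Birkhoff averages along matching intervals via the pointwise ergodic theorem, one produces a nontrivial element of $\Sig$ in a specified transverse direction. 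Iterating this ``shearing trick'' until no further transverse directions remain forces $\mu$ to be $\Sig^\circ$-homogeneous.

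The principal obstacle is making the previous paragraph rigorous. One must choose Følner intervals and generic points carefully so that the shearing comparison succeeds outside a small exceptional set; one must argue that the new stabilizer element genuinely enlarges $\Sig$ (rather than some related subgroup); and one must finally verify that the stabilizer produced is closed and that $\mu$ is the Haar measure of a single orbit. The other ingredients (non-divergence, ergodic averaging, linearization) are technically demanding but conceptually clear compared with the shearing/extra-invariance argument, which is what occupies the bulk of Ratner's original papers and for which I do not see a conceptual shortcut.
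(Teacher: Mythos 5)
The paper does not prove this theorem: it states it as a known result of Ratner and cites \cite{R-Duke} (Theorems A and B) for the proof, so there is no in-paper argument to compare against. Your sketch correctly identifies the standard architecture of Ratner's argument — measure classification via polynomial divergence and the shearing mechanism, then passage to orbit closures via Dani--Margulis non-divergence and the Dani--Margulis/Shah linearization technique, with the reduction from a general $H$ generated by unipotent one-parameter subgroups to a single such subgroup. You also candidly flag where the real work lies (the extra-invariance/shearing argument and the uniform version of non-divergence needed for linearization), and that is accurate. As an outline this is faithful to the literature, but it is a roadmap rather than a proof, and the theorem as used in this paper is taken as a black box; nothing in the paper is at stake if your sketch is left as a sketch.
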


Apart from their considerable intrinsic interest, the study of orbit closures for group actions on homogeneous spaces has numerous applications to other areas of mathematics, notably to number theory and the theory of Diophantine approximations. For example, in the mid 1980s, G.A. Margulis established a long-standing conjecture of Oppenheim regarding values of indefinite quadratic forms by analyzing orbit closures for the action of  the group preserving such an indefinite form on $\SL_3(\bR)/\SL_3(\bZ)$ (see~\cite{M-Oppenheim,Margulis-Fields-medal}).

In Margulis' proof of the Oppenheim Conjecture the acting group is generated by unipotent one parameter groups.
We shall discuss in this paper the opposite situation where the acting subgroup $H<G$ is diagonalizable. 
In fact we will confine our discussion to the specific setting of $$X_d=G/\Ga\qquad G=\SL_d(\bR)\qquad\Ga=\SL_d(\bZ)\qquad d \ge2.$$
We denote the image of $e \in G$ under the projection $G \to G/\Gamma$ by $e_\Gamma$. More generally, if $g\in G$, we write $ge_\Ga$ for the image of $g$ under this projection.
The space $X_d$ is identified in a natural way with the
space of unit volume lattices in $\bR^d$. Under this identification $ge_\Ga\in X_d$ corresponds to the lattice spanned by the
columns of the matrix $g$ (hence $e_\Ga$ corresponds to the standard lattice $\bZ^d$), and the action of $G$ on $G/\Ga$ coincides with the action of $G$ on the space of lattices induced from the action of $G$ on $\bR^d$.
Unless stated otherwise, we shall view elements of $\bR^d$ as column vectors. We let
\begin{equation}\label{the diagonal group}
A=\set{\diag{e^{t_1},\dots,e^{t_d}}:t_i\in\bR,\;\sum_1^d t_i=0},
\end{equation}
denote the subgroup of
$G$ consisting of diagonal matrices with positive diagonal entries (the group $A$ depends implicitly on $d$). 

\subsection {Regular and irregular $A$-orbits in $X_d$}
It is well known that when $d=2$
 there are many irregular points for the $A$-action (though by ergodicity of the $A$-action, almost every $x \in X_2$ has a dense orbit under $A$, hence in particular is $A$-regular).
 Indeed, in this case there are points $x\in X_2$ such that the Hausdorff dimension of the orbit closure $\overline{Ax}$ is not an integer, including points  with a bounded $A$-trajectory.

The situation is expected to change dramatically for $d\ge 3$. For example, for $d \geq 3$ we have the following conjecture essentially due to Cassels and Swinnerton-Dyer \cite {CaSD}, recast in dynamical terms by Margulis \cite{Margulis-Fields-medal}:
\begin{conjecture} For $d \geq 3$ every bounded $A$-orbit in $X _ d$ is periodic.
\end{conjecture}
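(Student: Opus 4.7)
The strategy would be to reduce the problem to the measure classification theorem of Einsiedler, Katok, and Lindenstrauss (EKL), which asserts that for $d\ge 3$ any $A$-invariant, $A$-ergodic probability measure on $X_d$ whose entropy with respect to \emph{some} one-parameter subgroup of $A$ is positive must be algebraic, i.e.\ the Haar measure on a closed orbit $Lx$ of some intermediate closed subgroup $A<L<G$. Given a bounded orbit $Ax$, first I would produce an $A$-invariant probability measure $\mu$ supported on $K:=\overline{Ax}$ by a Krylov--Bogolyubov argument: average pushforwards of $\delta_x$ over a F{\o}lner sequence in $A$ and extract a weak-$\ast$ limit; boundedness of the orbit ensures that no mass escapes to infinity, so $\mu(K)=1$.

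Decomposing $\mu$ into $A$-ergodic components, if each component had positive entropy for some one-parameter subgroup then the EKL classification would force each to be the Haar measure on a periodic $L$-orbit for some intermediate $A<L\le G$. Compactness of $K$ rules out $L=G$ (its Haar measure on $X_d$ is not supported on a compact set), and a case analysis of the possible intermediate subgroups $L$ in $\SL_d(\bR)$, combined with the fact that $x$ itself lies in the support of $\mu$, should allow one to conclude that $Ax$ is periodic. So the crux is to understand $\mu$ on the level of each of its ergodic components.

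The \emph{main obstacle}, and the reason this conjecture is one of the outstanding open problems in the area, is producing positive entropy. A priori, the components of $\mu$ may have zero entropy with respect to every one-parameter subgroup of $A$, and no measure classification is currently available in that regime for the diagonal action on $X_d$. The best known partial result, also due to Einsiedler, Katok and Lindenstrauss, uses exactly this framework to prove only that the set of exceptions to the closely related Littlewood conjecture has Hausdorff dimension zero. A genuine proof would require either a new mechanism extracting positive entropy from the combination of boundedness and non-periodicity, or a measure-rigidity theorem valid without any positive-entropy hypothesis --- neither of which appears within reach of current techniques. This is precisely why the paper under consideration focuses instead on building \emph{specific} classes of points whose orbit closures can be analyzed by more direct means.
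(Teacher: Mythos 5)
This statement is a \emph{conjecture}, not a theorem of the paper: the authors attribute it to Cassels--Swinnerton-Dyer and Margulis and state explicitly that it remains open, so there is no proof in the paper to compare against. Your proposal is, correctly, not a proof either --- and you say so. Your diagnosis of the obstruction is accurate and matches the accepted state of the art: the Krylov--Bogolyubov step is unproblematic (boundedness of $\overline{Ax}$ gives compactness, hence no escape of mass), and the EKL measure classification would finish the argument \emph{if} one could guarantee positive entropy of some ergodic component under some one-parameter subgroup of $A$; but nothing forces that, and in the zero-entropy regime no classification is known. This is exactly why the conjecture is open.

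Two small calibration points. First, the partial result the paper actually invokes in this context is the consequence of \cite{EKL} that for $d\ge 3$ every bounded $A$-orbit has closure of the same dimension as $A$ (the Hausdorff-dimension-zero statement for Littlewood exceptions is a sibling corollary of the same measure rigidity theorem; both are correct, but the former is the one directly relevant to bounded orbits). Second, it is worth noting how the paper sidesteps the entropy issue for its own (much more special) regularity results: rather than trying to extract positive entropy from an arbitrary bounded orbit, it starts from points whose orbit closures provably contain a periodic orbit of a larger group $H\not\subset A$, averages that $H$-invariant measure over a carefully chosen cone in $A$ (Lemma~\ref{gaining unipotents}), and thereby gains unipotent invariance directly --- after which Theorem~\ref{EKL} applies. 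That mechanism has no analogue for a general bounded orbit, which is precisely the gap you identify.
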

While this conjecture remains open, Einsiedler, Katok and Lindenstrauss \cite{EKL} have shown that for $d \ge 3$, for any $x \in X _ d$ with a bounded $A$-orbit, the orbit closure $\overline {A x}$ has the same dimension as $A$.
In contrast to the unipotent case, it is easy to see that even for $d \ge 3$ there are points in $X _ d$ with an irregular $A$-orbit. For example, take any point in $ X_2$ whose orbit under the one parameter diagonal subgroup of $\SL _ 2 (\bR)$ is not $A$-regular, and let $\Lambda' $ denote the corresponding lattice in $ \bR^2$. Then the point in $X_3$  corresponding to the lattice $\Lambda=\Lambda' \oplus \bZ$
 has an irregular $A$-orbit.\label{simple construction} It seems reasonable to expect that there should be some countable union of explicit proper subvarieties $V_i \subset G$ so that every $x \not \in \bigcup_{i} V_i$ has a regular $A$-orbit (indeed, a dense $A$-orbit), but nailing down an explicit conjecture in this direction has proved to be somewhat tricky.
\medskip

The aim of this paper is to exhibit new explicit examples of $A$-regular points of periodic type as well as explicit examples of irregular points. We then use the results to obtain nontrivial information on Diophantine approximations
of algebraic numbers.
\medskip

The following theorem gives an explicit construction of interesting $A$-regular points of periodic type (see \S\ref{geometric embeddings} for definitions and terminology, e.g.\ of geometric embedding).

\begin{theorem}\label{theorem 1}
Let $K$ be a number field of degree $d\ge 3$ which is not a CM field\footnote{Recall that a number field $K$ is said to be a CM-field if it is a totally complex quadratic extension of a totally real field.
}, and let $\vphi:K\to\bR^d$ be a geometric embedding of $K$. Let $\Lam\subset K$ be
a lattice and  $x_\Lam\in X_d$ be the point corresponding to the lattice $\vphi(\Lam)$ in $\bR^d$ after normalizing its volume. Then $x_\Lam$ is $A$-regular of periodic type.
\end{theorem}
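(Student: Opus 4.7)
Since $\vphi(K) \otimes_\bQ \bR \cong \bR^r \times \bC^s$, where $(r,s)$ is the signature of $K$ (so $r+2s=d$), the lattice $\vphi(\Lam) \subset \bR^d$ carries extra structure as a module over the order $\mathcal{O} = \set{u \in K : u\Lam\subseteq \Lam}$. Let $H_K \subset G$ denote the $(d-1)$-dimensional closed subgroup consisting of norm-one elements of $\vphi(K)\otimes_\bQ\bR$ acting by multiplication on $\bR^d$. By Dirichlet's unit theorem, $H_K \cap \on{Stab}(x_\Lam)$ contains the image of the norm-one units of $\mathcal{O}$ as a uniform lattice, and hence $H_Kx_\Lam$ is a periodic $H_K$-orbit, namely a compact torus of dimension $d-1$.

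The central observation is that although $A \not\subseteq H_K$ whenever $s > 0$, the intersection $T_K = A \cap H_K$ is the maximal $\bR$-split subtorus of $H_K$, of dimension $r+s-1$. I would take $L$ to be the smallest closed connected subgroup of $G$ containing $A$ and having $Lx_\Lam$ periodic. In the totally real case ($s=0$), already $L = A$; in general $L$ is larger (in particular, $L \supseteq H_K$), and one identifies $L$ by observing that, in the coordinates furnished by $\vphi$, $L$ is the block-diagonal subgroup compatible with the decomposition $\bR^r \oplus \bC^s$. The periodicity of $Lx_\Lam$ is then verified by exhibiting $L \cap \on{Stab}(x_\Lam)$ as an arithmetic lattice in $L$ via Borel--Harish-Chandra applied to an appropriate reductive $\bQ$-structure.

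The heart of the proof is then $\overline{Ax_\Lam} = Lx_\Lam$. The non-CM hypothesis enters crucially: in a non-CM field, the \emph{phase map} $\mathcal{O}^\times \to U(1)^s$, $u \mapsto (\arg \tau_j(u))_j$, has dense image (in a CM field, the image would be finite, since units would essentially come from the totally real subfield). This phase-density implies that the orbit $T_Kx_\Lam$ is dense in the compact torus $H_Kx_\Lam$. Since $T_K \subseteq A$, one obtains $\overline{Ax_\Lam} \supseteq H_Kx_\Lam$, and $A$-invariance of the closure then forces it to contain the $A$-saturation of $H_Kx_\Lam$. A further argument---leveraging that $A$ and $H_K$ together topologically generate $L$, together with connectivity and structure of $Lx_\Lam$---upgrades this inclusion to the desired equality.

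The main obstacle is the last step. While the inclusion $\overline{Ax_\Lam} \supseteq H_Kx_\Lam$ follows relatively cleanly from the phase-density characterization of non-CM fields, passing from there to the full $L$-orbit is delicate: since $A$ is not unipotent, Ratner's orbit closure theorem does not apply directly. The argument must instead exploit the specific algebraic relationship between $A$, $H_K$, and $L$, possibly via an equidistribution argument for $A$-translates on the periodic orbit $Lx_\Lam$ or a topological rigidity ruling out intermediate closed $A$-invariant subsets.
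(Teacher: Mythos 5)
Your setup is on the right track: the compact torus orbit $H_Kx_\Lam$ (the paper's $T^{(r,s)}x_\Lam$, compact by Dirichlet's unit theorem) and its split part $T_K=A\cap H_K$ are exactly the objects the paper uses. But there are two genuine gaps. First, a local one: you assert that for non-CM $K$ the phase map $\mathcal{O}^\times\to U(1)^s$ has \emph{dense} image, hence $\overline{T_Kx_\Lam}=H_Kx_\Lam$. This is unjustified and is more than the non-CM hypothesis gives you: intermediate subfields of $K$ can force the closure $\overline{A_{r,s}x_\Lam}=Hx_\Lam$ to be a \emph{proper} subtorus orbit. The paper only proves (Lemmas~\ref{equiblock} and~\ref{centralizer 1}) that non-CM implies $H$ strictly contains $A_{r,s}$ and, crucially, that $C_G(H)=T^{(r,s)}$ --- and it is this centralizer statement, not density, that the argument actually needs.

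Second, and more seriously, the step you yourself flag as ``the main obstacle'' is the entire content of the theorem, and your proposal contains no mechanism for it. Knowing that $\overline{Ax_\Lam}$ contains a positive-dimensional compact torus orbit transverse to $A$ does not, by any soft topological or generation argument, yield a homogeneous orbit closure; $A$ is not unipotent and no Ratner-type orbit closure theorem applies. The paper's route is: take the $H$-invariant probability measure on $Hx_\Lam$, average it over F\o lner sets inside a carefully chosen open cone in $A$ (Lemma~\ref{gaining unipotents}); rule out escape of mass via Eskin--Mozes--Shah, which is precisely where compactness of $C_G(H)x_\Lam=T^{(r,s)}x_\Lam$ is used; show the limit measure gains invariance under a one-parameter unipotent by a highest-root contraction argument (Lemma~\ref{first step}); classify the resulting $A$- and unipotent-invariant measure as homogeneous on a periodic orbit of a reductive $L\supseteq A$ (Theorem~\ref{EKL}, via \cite{EKL} or Ratner); extract a compact $A$-orbit inside that periodic orbit via Prasad--Raghunathan (Theorem~\ref{PR}); and finally invoke Lindenstrauss--Weiss (Theorem~\ref{LW}) to conclude $A$-regularity of periodic type. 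Note also that the conclusion is reached without ever identifying the group $L$ with $\overline{Ax_\Lam}=Lx_\Lam$ explicitly, whereas your plan commits to a specific block-diagonal $L$, which is an additional unproven claim. As it stands, your proposal is a plausible outline whose decisive steps are missing.
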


Theorem~\ref{theorem 1} is a special case of Theorem~\ref{theorem 2}, whose statement is deferred to the next section.
When $K$ is totally real (i.e.\ it has only real embeddings) the $A$-orbit of the point $x_\Lambda$ is periodic, hence trivially $A$-regular of periodic type. Barak Weiss and the first named author \cite{LW} have shown that any point $x \in X _ d$ for which $\overline {Ax} \ni x _ \Lambda$ (with $x_\Lambda$ arising as above from a totally real field $K$, and $d\geq3$) is $A$-regular of periodic type, and this theorem can also be used to construct non-obvious explicit $A$-regular points. Theorem~\ref{theorem 1} and the results of \cite{LW} implies that in fact if $\overline {Ax} \ni x _ \Lambda$ then $x$ is $A$-regular of periodic type whenever $K$ satisfies the conditions of Theorem~\ref{theorem 1} (cf.\ Corollary~\ref{generalization of LW}).

In the other direction, in \cite{Shap1} the second named author established that there exist irregular $A$-orbits in $X_3$ not of the form outlined above on p.~\pageref{simple construction}. This is somewhat surprising, as it contradicts an influential conjecture regarding the orbit closure of multidimensional diagonalizable group by Margulis~\cite[Conjecture 1.1]{M-problems} (Maucourant~\cite{Mau}  has already given a counterexample to this conjecture when instead of taking the full diagonal group $A$, one takes a suitable multidimensional subgroup; we have learned while finalizing this text that Tomanov has also constructed interesting counterexamples somewhat similar to the class considered here for a different group $G$). The proof given in~\cite{Shap1} was indirect. In \S~\ref{irregular} we further analyze these examples and give a full description of the orbit closures in these cases.
Keeping notational introduction to the minimum, we state here the following theorem. A more accurate version in the form of Theorem~\ref{theorem 3'} appears in \S~\ref{irregular}.
For a vector $v\in\bR^{d-1}$, we let
\begin{equation}\label{hor sgrps}
h_v=\pa{
\begin{array}{ll}
1&0\\
v & I_{d-1}
\end{array}
},\quad
g_v=\pa{
\begin{array}{ll}
1&v^t\\
0 & I_{d-1}
\end{array}
}
\end{equation}
where $I_{d-1}$ denotes the identity matrix of dimension $d-1$ and the $0$'s denote the corresponding trivial vectors.
Let $x_{v},z_v\in X_d$, denote the lattices spanned by the columns of $h_v$ and $g_v$ respectively.
\begin{theorem}\label{irregular points theorem}
Let $v=(\al,\be)^t\in\bR^2$ be such that $\al,\be$ are irrational and $1,\al,\be$ linearly dependent over $\bQ$. Then
there exist two  reductive groups $H^{(i)}, i=1,2$ (containing $A$), and two lattices $y_1,y_2\in X_3$, such that the orbits $H^{(i)}y_i$ are closed and such that
\begin{enumerate}
\item $\overline{Ax_v}\subset Ax_v\cup H^{(1)}y_1 \cup H^{(2)}y_2$,
\item\label{T002} the orbit $Ax_v$ is disjoint from $H^{(i)}y_i$,
\item\label{T003} $\overline{Ax_v}\cap H^{(i)}y_i\ne\emptyset$.
\end{enumerate}
A corresponding statement for the lattice $z_v$ holds (with different groups $H^{(i)}$).
 In particular, $x_v,z_v$ are irregular for the $A$-action.
\end{theorem}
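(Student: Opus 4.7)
The plan is to exploit the rational structure of $x_v$ induced by the $\bQ$-linear relation, and then analyze the possible degenerations of the orbit via Mahler's compactness criterion. Clearing denominators, write the relation as $p + q\alpha + r\beta = 0$ with $(p,q,r) \in \bZ^3$ primitive; the irrationality of $\alpha, \beta$ forces $q, r \neq 0$. Pairing $(p,q,r)$ against the three columns of $h_v$ yields $0, q, r \in \bZ$, so $(p,q,r)$ lies in the dual lattice $x_v^*$, and the rational hyperplane $V := (p,q,r)^\perp$ meets $x_v$ in a rank-two sublattice $L := x_v \cap V$ containing the vector $(1,\alpha,\beta)$, giving a short exact sequence $0 \to L \to x_v \to \bZ \to 0$. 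Under the $A$-action, $ax_v$ contains the rotated sublattice $aL$ inside the plane $aV$ whose unit normal is proportional to $a^{-1}(p,q,r)$, and a cross-product computation on a basis of $L$ shows that the two-dimensional covolume of $aL$ in $aV$ equals $|a^{-1}(p,q,r)|$ up to a positive constant depending only on $(p,q,r)$.

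The two periodic orbits $H^{(i)} y_i$ are identified as follows. Analyzing the short vectors of $ax_v$ --- the basis columns $e^{t_2}e_2, e^{t_3}e_3$ together with integer combinations $n_1(1,\alpha,\beta) + n_2 e_2 + n_3 e_3$, whose lengths are controlled by simultaneous Diophantine approximation of $(\alpha,\beta)$ (sharpened by the $\bQ$-relation) --- and invoking Mahler's criterion, one finds that $ax_v$ remains in a fixed compact subset of $X_3$ only for $a$ in a bounded neighborhood of a distinguished one-parameter locus $A' \subset A$ determined by the relation. Along each of the two infinite ends of $A'$, the unit normal $a^{-1}(p,q,r)/|a^{-1}(p,q,r)|$ aligns asymptotically with one of the coordinate axes of $\bR^3$, so $aV$ converges to the corresponding rational coordinate hyperplane, and after reducing modulo $\SL_3(\bZ)$ the lattice $ax_v$ approaches a limit $y_i$ which splits as a direct sum of a rank-two lattice in the coordinate hyperplane and a rank-one lattice on the complementary axis. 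The stabilizer in $\SL_3(\bR)$ of such a rational splitting is a Levi subgroup $H^{(i)} \cong S(\GL_1 \times \GL_2)$ of the maximal parabolic, containing $A$, and $H^{(i)} y_i$ is closed because the splitting is defined over $\bQ$. The corresponding statement for $z_v$ follows by Cartan duality $g \mapsto (g^t)^{-1}$, which identifies the orbit of $z_v$ with the orbit of a lattice dual to some $x_{v'}$ whose rational structure is analogous.

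Finally, verify the three conclusions. For (1), any $a_n \to \infty$ outside a neighborhood of $A'$ satisfies $\mathrm{sys}(a_n x_v) \to 0$ by the Mahler analysis, so $a_n x_v$ escapes every compact subset of $X_3$; consequently every accumulation point of the orbit outside $Ax_v$ lies in $H^{(1)} y_1 \cup H^{(2)} y_2$. For (2), any lattice in $H^{(i)} y_i$ admits a rational direct-sum decomposition of $\bR^3$, whereas $ax_v$ contains the vector $(e^{t_1}, e^{t_2}\alpha, e^{t_3}\beta)$ with $\alpha, \beta \notin \bQ$, which cannot be split off rationally beyond the original relation encoded by $V$; this yields a contradiction and so $Ax_v \cap H^{(i)} y_i = \emptyset$. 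For (3), take explicit sequences $a_n$ diverging along one end of $A'$ and extract a convergent subsequence of $a_n x_v$; the limit lies in $H^{(i)} y_i$ by construction. The principal obstacle is (1): ruling out exotic accumulation behaviour requires a careful Mahler-compactness analysis of all unbounded sequences in $A$, possibly aided by a Ratner-type rigidity argument on a unipotent subgroup of each $H^{(i)}$ to confirm that no other homogeneous orbits arise as accumulation sets.
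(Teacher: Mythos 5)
Your overall strategy (exploit the rational relation, locate the non-divergence locus in $A$ via Mahler's criterion, identify the accumulation set with periodic orbits of block-reductive groups) is the right one and matches the paper's in outline, but two of its key steps are wrong or missing as stated. First, the geometry of the non-divergence locus: it is not a neighborhood of the two ends of a single one-parameter subgroup $A'$, but a neighborhood of the union of two \emph{rays of two different} one-parameter subgroups, $\set{\diag{e^{-t},e^t,1}}_{t>0}$ and $\set{\diag{e^{-t},1,e^t}}_{t>0}$ (the opposite rays contract $e_2$, resp.\ $e_3$, which lie in $x_v$, so the orbit diverges there). This is why the two limit groups $H^{(1)},H^{(2)}$ have $\GL_2$-blocks in the coordinate positions $\set{1,2}$ and $\set{1,3}$ respectively. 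Second, and more seriously, your identification of the limit lattices $y_i$ as \emph{rational direct sums} (a rank-two lattice in a coordinate plane plus a rank-one lattice on the complementary axis) is false whenever the relation $\be=\frac{p_1}{q}\al+\frac{p_2}{q}$ has denominator $q>1$. The actual accumulation set lies in the translated orbit $H^{(1)}p\,e_\Ga$ with $p=\bigl(\begin{smallmatrix}1&0&0\\0&1&0\\q^{-1}&0&1\end{smallmatrix}\bigr)$: the limit lattices have vectors whose third coordinates are of the form $\ell/q$ with $\ell\not\equiv 0\ (\mathrm{mod}\ q)$, so they do not split. Proving this containment is the delicate heart of the argument: one must show (a) every vector of a limit lattice has third coordinate in $\frac1q\bZ$, and (b) there are two such vectors whose numerators generate $\bZ/q\bZ$; (b) requires tracking the shortest and second-shortest vectors of the projected lattices $a(t)x_\al$ in $X_2$ and the $\SL_2(\bZ)$ change of basis between them. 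With the wrong $y_i$, your conclusion (3) fails, since $H^{(1)}e_\Ga$ and $H^{(1)}p\,e_\Ga$ are disjoint closed orbits.

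Beyond that, the quantitative input you defer --- showing that $a_nx_v$ diverges whenever $a_n$ leaves every neighborhood of the two rays --- is precisely where the rational relation enters: one applies Dirichlet's theorem to $\al$ with a scale $T$ tuned to the two diagonal parameters, and uses $\be=\frac{p_1}{q}\al+\frac{p_2}{q}$ to convert a good approximation of $\al$ into a single integer vector $(qk,qm,p_1m-p_2k)^t$ that becomes short in $a_nx_v$; calling this ``a careful Mahler-compactness analysis, possibly aided by a Ratner-type rigidity argument'' leaves the main obstacle unaddressed (and no unipotent-rigidity input is needed here). Finally, for non-emptiness of the limit sets you must rule out divergence of the trajectories $a^{(i)}(t)x_v$, $t>0$; this follows by projecting to $X_2$ and using that a geodesic ray with irrational endpoint $\al$ is non-divergent in the modular surface --- a step your proposal skips by simply ``extracting a convergent subsequence.''
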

\begin{remark}
In fact, it is not hard to see that for (Lebesgue) almost any $\al$, for any $\be$ as in Theorem~\ref{irregular points theorem}, one actually has the equality $$\overline{Ax_v}= Ax_v\cup H^{(1)}y_1 \cup H^{(2)}y_2.$$
\end{remark}

\subsection{Diophantine approximations of algebraic vectors}
One of the main motivations which led to the results appearing in this paper was to interpret dynamically the work of Cassels and Swinnerton-Dyer, who proved in~\cite{CaSD} that if $\al,\be\in\bR$ are two algebraic numbers belonging to the same cubic number field, then they satisfy the following conjecture of Littlewood:

\begin{conjecture}[Littlewood, c. 1930] For any pair of real numbers $\al,\be \in \bR$,
\begin{equation}\label{Littlewood}
\liminf_{\av{n}\to\infty}\av{n}\idist{n\al}\idist{n\be}=0.
\end{equation}
\end{conjecture}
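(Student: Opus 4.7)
The plan is to translate the statement into a question about unboundedness of certain $A$-orbits on $X_3$. Given $\al,\be\in\bR$, let $v=(\al,\be)^t$ and consider the lattice $x_v = h_v e_\Ga \in X_3$ defined via (\ref{hor sgrps}). A direct computation together with Mahler's compactness criterion shows that (\ref{Littlewood}) holds for $(\al,\be)$ if and only if the orbit $A x_v$ is unbounded in $X_3$. Indeed, applying $a_{s,t}=\diag{e^{-s-t},e^s,e^t}\in A$ to $x_v$ and forming the integer combination $n\cdot(\textrm{col }1)+p\cdot(\textrm{col }2)+q\cdot(\textrm{col }3)$ produces a lattice vector with entries $\pa{e^{-s-t}n,\; e^s(n\al+p),\; e^t(n\be+q)}$; optimizing $p,q\in\bZ$ and then $s,t\in\bR$, the product of the three absolute values equals $\av{n}\idist{n\al}\idist{n\be}$. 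Hence arbitrarily short nonzero vectors appear in $a_{s,t}x_v$ precisely when this quantity has infimum zero, which by Mahler's criterion is equivalent to $A x_v$ leaving every compact set.

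With this reformulation the strategy is to classify the possible closures $\overline{A x_v}$. One first disposes of the degenerate case where $1,\al,\be$ are linearly dependent over $\bQ$: either one of $\al,\be$ is rational, making the liminf trivially zero, or an integer relation $a+b\al+c\be=0$ reduces the problem to a one-dimensional simultaneous approximation question which admits an elementary treatment. In the remaining, generic, case one invokes the theorem of Einsiedler, Katok and Lindenstrauss cited earlier: the set of $x\in X_3$ with bounded $A$-orbit has Hausdorff dimension equal to $\dim A=2$, and one checks that the map $(\al,\be)\mapsto x_v$ is sufficiently nondegenerate to transfer this bound, producing a Hausdorff-dimension-zero set of exceptional pairs outside which (\ref{Littlewood}) is already established.

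To pass from \emph{almost every} pair to \emph{every} pair one would then establish the Cassels--Swinnerton-Dyer conjecture stated above, namely that every bounded $A$-orbit in $X_3$ is periodic. Combined with the observation that an $x_v$ with $1,\al,\be$ linearly independent over $\bQ$ cannot lie on a periodic $A$-orbit---since by Theorem~\ref{theorem 1} periodic $A$-orbits in $X_3$ are precisely the unit-lattices associated to geometric embeddings of totally real cubic fields, and a direct linear-algebra check shows such lattices are incompatible with the unipotent normal form $h_v$ in generic position---this would complete the proof.

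The principal obstacle, and the reason the conjecture remains open, lies in this final step: upgrading the measure rigidity of \cite{EKL} (``positive entropy implies algebraic'') to the topological assertion ``bounded implies periodic''. All presently available entropy techniques require a nontrivial entropy hypothesis with respect to at least one element of $A$, whereas a bounded $A$-orbit may a priori support $A$-invariant probability measures whose every one-parameter subgroup has zero entropy. Ruling out such anomalous zero-entropy measures is the critical unresolved issue. Accordingly, the present paper does not attempt a head-on proof of (\ref{Littlewood}) for arbitrary $(\al,\be)$; instead it exploits additional algebraic rigidity (as in the inhomogeneous $\sqrt[3]{2},\sqrt[3]{4}$ statement of the abstract) to obtain Littlewood-type conclusions in settings where the reduction to periodic orbits, and hence the strategy outlined above, can actually be carried out.
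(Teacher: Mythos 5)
The statement you were asked about is the Littlewood Conjecture itself; the paper records it as a \emph{conjecture} precisely because no proof is known, and it offers none. Your write-up is therefore not a proof and cannot be one: it is a (largely accurate) survey of the standard dynamical reduction together with an explanation of why the problem is open. To your credit, you locate the genuine gap yourself: the passage from ``$\overline{Ax_v}$ bounded'' to ``$Ax_v$ periodic'' is exactly the Cassels--Swinnerton-Dyer/Margulis conjecture, and the measure-rigidity input of \cite{EKL} only controls $A$-invariant measures with positive entropy under some one-parameter subgroup, so anomalous zero-entropy measures on a bounded orbit closure are not excluded. That diagnosis matches the paper's own framing: the conjecture appears only as motivation, and what is actually proved are the stronger ``property C'' statements of Theorem~\ref{theorem 3} for vectors whose coordinates, together with $1$, span a number field, where the orbit-closure analysis can be completed via Theorems~\ref{theorem 2} and~\ref{theorem 4}.

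Two inaccuracies in your outline are worth correcting even as a survey. First, \eqref{Littlewood} for $(\al,\be)$ is equivalent to unboundedness of the orbit of $x_v$ under the \emph{semigroup} $\set{\diag{e^{-s-t},e^s,e^t}:s,t\ge 0}$, not under all of $A$; unboundedness under the full group is a strictly weaker condition (note that the paper is careful to speak of ``an appropriate open semigroup of $A$'' when describing the Cassels--Swinnerton-Dyer result \cite{CaSD}). Your optimization over $p,q\in\bZ$ and $s,t\in\bR$ only yields the stated product if $s,t$ are restricted to the positive quadrant. Second, the characterization of compact (equivalently, periodic) $A$-orbits in $X_3$ as lattices coming from totally real cubic fields is a classical fact and is not the content of Theorem~\ref{theorem 1}, which instead asserts that lattices from non-CM number fields are $A$-regular of periodic type; citing Theorem~\ref{theorem 1} for that converse is a misattribution, though the fact you need is true.
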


In dynamical terms, the Cassels and Swinnerton-Dyer result amounts to  showing that for $v=(\al,\be)^t$, the orbit of $x_v$
under an appropriate open semigroup of $A$ is unbounded. We fully analyze the orbit closures in this and more general cases, and this stronger statement has further Diophantine implications.
In order to state our results on Diophantine approximations we give the following definition:
\begin{definition}\label{property C}
A vector $v\in\bR^d$ is said to have property C (after Cassels) of the first type, if the following statement holds:
\begin{align}\label{C2}
\textrm{For all }\vec{\ga}\in\bR^d \quad \liminf_{\av{n}\to\infty}\av{n}\prod_1^d\idist{nv_i-\ga_i}=0.
\end{align}
It is said to have property C of the second type, if the following statement holds:
\begin{align}\label{C1}
\textrm{For all }\ga\in\bR \quad \liminf_{\vec{n}\in\bZ^d, \prod\av{n_i}\to\infty}\pa{\prod_1^d\av{n_i}}\idist{\sum_1^d n_iv_i -\ga}=0.
\end{align}
\end{definition}
For $d=1$, it was shown by Khinchine in the early 1920's that numbers $v\in\bR$ with property C (the two notions of this property coincide in this case)
 do not exist (see~\cite{D}). The question of whether in higher dimensions vectors with property C exist was open until recently. In~\cite{Shap1}, the second named author proved that almost any vector in $\bR^d$ ($d\ge 2$) has property C of both types. Moreover, it was shown there that if $1,\al,\be$ form a basis for a totally real cubic number field, then the vector $(\al,\be)^t$ has property $C$ of both types.
We give the following more general result covering the case of non-totally real cubic fields and number field of higher degree:
\begin{theorem}\label{theorem 3}
Let $1,\al_2,\dots,\al_d\in \bR$ be a basis for a number field of degree $d\ge 3$ over $\bQ$. Then the vector $(\al_2,\dots,\al_d)^t\in\bR^{d-1}$ has property C of both types.
\end{theorem}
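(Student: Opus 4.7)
My plan is to derive Theorem~\ref{theorem 3} from Theorem~\ref{theorem 1} applied to the lattice $\Lambda = \bZ + \bZ\al_2 + \cdots + \bZ\al_d$ in $K := \bQ(\al_2,\ldots,\al_d)$. Since every $\al_i$ is real, $K$ has a real embedding and is therefore not a CM field, so Theorem~\ref{theorem 1} gives $\overline{A x_\Lambda} = L x_\Lambda$ for some closed $A \le L \le G$, with $L x_\Lambda$ a compact periodic $L$-orbit.

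I would begin with the standard dynamical reformulation of property C. By Minkowski's convex body theorem, property C of the second type for $v = (\al_2,\ldots,\al_d)^t$ at $\ga \in \bR$ amounts to asserting that the $A$-orbit, inside the grid space $Y_d = (G \ltimes \bR^d)/(\Ga \ltimes \bZ^d)$, of the affine lattice obtained from $z_v$ by translating its first coordinate by $\ga$ contains grids with nonzero vectors of arbitrarily small product of coordinates. Property C of the first type at $\ga \in \bR^{d-1}$ has the analogous formulation using $x_v$ and the translation $(0,\ga)^t$. When $\ga = 0$ these reductions say simply that the $A$-orbit of $z_v$ (resp.\ $x_v$) in $X_d$ is unbounded in the product-of-coordinates sense.

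The key step is to compare $z_v$ with $x_\Lambda$ inside $X_d$. Writing $M$ for the matrix with columns $\vphi(1), \vphi(\al_2), \ldots, \vphi(\al_d)$, a direct calculation using $\sigma_1 = \mathrm{id}$ gives the factorisation $M = h_{\vec 1}\, T_0\, g_v$, where $T_0 = \begin{pmatrix} 1 & 0 \\ 0 & T' \end{pmatrix}$ and $T' = (\sigma_i(\al_j) - \al_j)_{2 \le i,j \le d}$; hence, after volume normalisation, $x_\Lambda = h_{\vec 1} D z_v$ for a block-diagonal matrix $D$. The one-parameter $A$-subgroup $a_s = \diag{e^{(d-1)s}, e^{-s},\ldots,e^{-s}}$ commutes with $D$ (both have a scalar lower-right block) and contracts $h_{\vec 1}$ under conjugation, so $a_s z_v \approx D^{-1}(a_s x_\Lambda)$ as $s \to \infty$. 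Using that $a_s x_\Lambda \in L x_\Lambda$ (a compact set) and the density of the full $A$-orbit of $x_\Lambda$ in $L x_\Lambda$ provided by Theorem~\ref{theorem 1}, I would argue that $\overline{A z_v}$ contains $D^{-1} L x_\Lambda$, and then use $A$-invariance of the closure together with the structural description of $L$ to upgrade this to $\overline{A z_v} \supseteq L x_\Lambda$. For $\ga = 0$ this immediately yields property C of the second type by Mahler, since $Az_v$ accumulates on a nonempty compact $L$-orbit that it does not itself equal; the first type is handled by the analogous comparison involving $x_v$.

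For $\ga \neq 0$ I would lift to $Y_d$, a $\bT^d$-bundle over $X_d$. Above the compact base $L x_\Lambda$, the lifted orbit is an $A$-invariant subset of a restricted torus bundle, and using a one-parameter unipotent subgroup of $L$ whose action on the fiber $\bT^d$ is irrational, a Ratner-style minimality argument should produce density on the fibers and hence approximations to arbitrary $\ga$. The main obstacle is the comparison step producing $\overline{A z_v} \supseteq L x_\Lambda$: one needs to show that $D^{-1} L x_\Lambda$ equals $L x_\Lambda$ up to the closure operation, which reduces either to $D$ normalising $L$ (so that $D^{-1} L x_\Lambda = L D^{-1} x_\Lambda$) or to absorbing $D$ via the Dirichlet-unit stabiliser of $x_\Lambda$ inside $A$; the inhomogeneous extension is the second delicate point, needing a uniform-in-$\ga$ density on the torus fibers.
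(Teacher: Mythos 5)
Your proposal has two genuine gaps, and both sit exactly where you flag ``obstacles''; unfortunately these are not finishing touches but the core of the argument. First, the comparison step. You invoke Theorem~\ref{theorem 1} for $x_\Lam$ and try to transfer its conclusion to $z_v$ by pushing with $a_s=\diag{e^{(d-1)s},e^{-s},\dots,e^{-s}}$. But only elements of $A$ in a cone around this one--parameter subgroup contract $h_{\vec 1}$ under conjugation, so what you actually obtain is that $\overline{Az_v}$ contains $D^{-1}\cdot\overline{\set{a_sx_\Lam}_{s>0}}$ --- and the closure of a \emph{single one-parameter trajectory} is not $Lx_\Lam$: density of the full $A$-orbit in $Lx_\Lam$ says nothing about it. That closure is $H'x_\Lam$ for a subtorus $H'<T^{(r,s)}$ of the compact torus orbit (Lemma~\ref{regularity of torus}), and when $s>0$ (the non--totally--real case, which is the new content of the theorem) $H'x_\Lam$ contains no compact $A$-orbit, so you also cannot shortcut via \cite{LW}/Theorem~\ref{theorem 6}. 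This is precisely why the paper proves Theorem~\ref{theorem 2}: writing $z_v=px_\Lam$ with $p\in P_1^-$, it passes to the limit $y=p'x_\Lam$ and the orbit of the conjugated subtorus $H=p'H'p'^{-1}$, and then runs the full measure-rigidity machinery (F\o lner averaging, Lemma~\ref{gaining unipotents} to gain a unipotent, Eskin--Mozes--Shah non-divergence using $C_G(H')=T^{(r,s)}$ from Lemma~\ref{centralizer 2}, and the classification in Theorem~\ref{EKL}) to produce a periodic orbit of a reductive group inside $\overline{Az_v}$. Your proposed ``upgrade'' ($D$ normalising $L$, or absorbing $D$ into the unit stabiliser) has no substitute for this machinery, and as stated the inclusion $\overline{Az_v}\supseteq L x_\Lam$ is unjustified.

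Second, the inhomogeneous case $\ga\ne 0$. Your reduction to density in fibers of the torus bundle $Y_d$ via a ``Ratner-style minimality argument'' is exactly the content of the GDP machinery that the paper imports: Theorem~\ref{theorem 6} plus \cite[Theorem 4.5]{Shap1} (if $\overline{Ax}$ contains a compact $A$-orbit then $Ax$ is compact or $x$ is GDP) plus the implication GDP $\Rightarrow$ property C from \cite{Shap1}. That is a substantial theorem in its own right, not a routine fiber-density argument, and your sketch does not supply it. You would also need to rule out the other horn of the dichotomy, namely that $Ax_v$ or $Az_v$ is compact (easy here, since $z_v$ contains $e_1$, which has zero product of coordinates, but it must be said). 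In short: the correct skeleton is Theorem~\ref{theorem 2} $\Rightarrow$ Theorem~\ref{theorem 5}, combined with Theorem~\ref{theorem 4}; replacing Theorem~\ref{theorem 2} by Theorem~\ref{theorem 1} plus a soft comparison does not work.
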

Note that for the vector $(\al,\be)^t$ to have property C of the first type is a much stronger property than for it to satisfy Littlewood's conjecture. Note also that when $\al,\be$ are linearly dependent over $\bQ$, then $\al,\be$ satisfy Littlewood's conjecture almost trivially, while the vector $(\al,\be)^t$ does not have property C of any type; see Theorem 1.3 in~\cite{Shap1}. In this respect, Theorem~\ref{theorem 3} is a strengthening of the aforementioned  result of Cassels and Swinnerton-Dyer.

\medskip
We shall use the following definition from \cite{Shap1}
\begin{definition}
A
lattice $x\in X_d$ is said to be GDP\footnote{GDP is an acronym for all Grids have Dense Products.}, if for any vector $w\in\bR^d$, the set of products $\set{\prod_1^d(u_i+w_i):u\in x}$ is dense in $\bR$.
\end{definition}
In~\cite{Shap1} it is shown that if the lattice $x_v$ (resp.\ $z_v$) is GDP, then $v$ has property $C$ of type 1 (resp.\ 2).  Hence, Theorem~\ref{theorem 3} follows from the next two theorems:
\begin{theorem}\label{theorem 4}
Let $d\ge 3$ and $x\in X_d$ be given. If $x$ is $A$-regular of periodic type, then either $x$ is $A$-periodic, or $x$ is GDP.
\end{theorem}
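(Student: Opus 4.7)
The plan is to lift the problem to the space of grids $Y_d = (G \ltimes \bR^d)/(\Ga \ltimes \bZ^d)$, whose points parametrize affine translates $y+w$ of unit-covolume lattices $y \in X_d$. On $Y_d$ define the set-valued map $\tilde N(y+w) = \overline{\set{N(v) : v \in y+w}} \subset \bR$, where $N(v) = \prod_{i=1}^d v_i$. Two features of $\tilde N$ are crucial. First, $\tilde N$ is $A$-invariant, because $N$ is $A$-invariant and $A$ acts on $Y_d$ by invertible linear maps, merely permuting the vectors of a grid. Second, $\tilde N$ is lower semi-continuous in the Kuratowski sense: if $z_n \to z$ in $Y_d$ and $v \in z$, there exist $v_n \in z_n$ with $v_n \to v$, so $N(v) = \lim_n N(v_n) \in \liminf_n \tilde N(z_n)$. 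Combining these, for every $z \in \overline{A(x+w)}$ one has $\tilde N(z) \subseteq \tilde N(x+w)$. It therefore suffices to exhibit, for each fixed $w$, some $z_0 \in \overline{A(x+w)}$ with $\tilde N(z_0) = \bR$.

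Next, I would extract structural input from the hypotheses $\overline{Ax} = Lx$, $L \supsetneq A$, and $Lx$ periodic. Since $A$ is a maximal $\bR$-split, self-centralizing torus in $G$, the strict inclusion $A \subsetneq L$ forces $\lie(L)$ to contain a non-trivial root space of $A$, and hence $L$ contains a one-parameter unipotent subgroup $U = \set{u_t = \exp(tE_{ij})}$ for some matrix unit $E_{ij}$ with $i \ne j$ (after possibly conjugating by a permutation matrix normalizing $A$). The stabilizer $L_x := \set{l \in L : lx = x}$ is a lattice in $L$, and the hypothesis $\overline{Ax}=Lx$ is equivalent to $\overline{A \cdot L_x} = L$. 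Moreover $L$ is necessarily non-solvable: a solvable connected group containing the non-compact torus $A$ has non-trivial modular function on $A$ and is therefore not unimodular, contradicting the existence of the lattice $L_x$. Thus $L$ has a non-trivial semisimple part, and by Borel density the projection of $L_x$ onto this semisimple part is Zariski dense.

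The core step is to show that $\overline{A(x+w)}$ in $Y_d$ contains a continuous one-parameter family of grids of the form $\set{u_t z_0 : t \in \bR}$. A sequence $a_n \in A$ with $a_n x \to l_0 x$ can be written as $a_n = l_0 \gamma_n h_n$ with $\gamma_n \in L_x$ and $h_n \to e$ in $L$; the requirement $a_n \in A$ restricts the admissible $\gamma_n$ to those for which $l_0 \gamma_n$ lies near $A$ in $L$. The density $\overline{A \cdot L_x} = L$ together with Borel density of $L_x$ in the semisimple part of $L$ ensures such $\gamma_n$ exist in sufficient abundance and can be chosen so that $\gamma_n w \bmod x$ converges to any prescribed point of $\overline{L_x \cdot w}$ in the torus $\bR^d/x$. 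Hence the fiber of $\overline{A(x+w)}$ over each $l_0 x$ contains $l_0 \cdot (x + \overline{L_x \cdot w})$. Letting $l_0$ traverse a unipotent curve $\set{u_t l_0'}_{t \in \bR}$, and invoking a Ratner-type rigidity statement for the $U$-action on $Y_d$ to glue these fiberwise data into a continuous curve in $Y_d$, one obtains the desired family $\set{u_t z_0}$ inside $\overline{A(x+w)}$. This will be the principal technical hurdle, since the interplay between the $A$-orbit structure, the lattice $L_x$, and the unipotent direction $U$ must be controlled simultaneously.

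Granting such a family, density of $\tilde N$ on $x+w$ is immediate. Pick any vector $v \in z_0$; then $N(u_t v) = (v_i + t v_j) \prod_{k \ne i} v_k$ is an affine function of $t$ with leading coefficient $v_j \prod_{k \ne i} v_k$. This coefficient is non-zero for generic $v$ in the grid $z_0$, since the degenerate locus where $v_j$ or some $v_k$ ($k \ne i$) vanishes is a finite union of coordinate hyperplanes, which cannot contain an entire grid in $\bR^d$. Hence $\set{N(u_t v) : t \in \bR} = \bR$ for an appropriate $v$, so $\bigcup_{t \in \bR} \tilde N(u_t z_0) = \bR$. By the reduction of the first paragraph, $\tilde N(x+w) = \bR$, and since $w \in \bR^d$ was arbitrary, $x$ is GDP.
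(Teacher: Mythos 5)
Your overall strategy is genuinely different from the paper's: the paper proves this theorem in two lines, by combining Theorem~\ref{theorem 6} (a point is $A$-regular of periodic type if and only if $\overline{Ax}$ contains a compact $A$-orbit) with Theorem~4.5 of \cite{Shap1}, which states that if $\overline{Ax}$ contains a compact $A$-orbit then $Ax$ is compact or $x$ is GDP. What you are attempting is essentially a from-scratch proof of that cited theorem in the space of grids. Your first, second and last paragraphs are fine: the semicontinuity and $A$-invariance of $\tilde N$, the fact that $L\supsetneq A$ forces a root space (hence a one-parameter unipotent $U$) inside $L$, and the observation that a grid cannot lie in the union of the coordinate hyperplanes are all correct, and they do reduce the problem to producing a curve $\set{u_tz_0}_{t\in\bR}\subset\overline{A(x+w)}$.

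The gap is that the third paragraph, which you yourself flag as the ``principal technical hurdle,'' is not an argument. First, the claim that the fiber of $\overline{A(x+w)}$ over $l_0x$ contains $l_0\cdot\pa{x+\overline{L_x\cdot w}}$ does not follow from what you wrote: the elements $\gamma_n\in L_x$ arising from sequences $a_n\in A$ with $a_nx\to l_0x$ are exactly those with $l_0\gamma_n h_n\in A$, a severely constrained subset of $L_x$, and Borel density of the projection of $L_x$ to the semisimple part of $L$ says nothing about how this constrained subset acts on the torus $\bR^d/x$. Moreover, since the $\gamma_n$ are unbounded, $h_n\to e$ does not give $\gamma_nh_nw-\gamma_nw\to 0$, so even identifying the fiber with limits of $\gamma_nw\bmod x$ is unjustified. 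Second, ``invoking a Ratner-type rigidity statement for the $U$-action on $Y_d$ to glue'' is not available: $\overline{A(x+w)}$ is a closed $A$-invariant set, not a $U$-invariant set or a $U$-invariant measure, so Ratner's theorems do not apply to it; obtaining unipotent invariance of some object inside an $A$-orbit closure is precisely the hard content of Lemma~\ref{gaining unipotents} and Theorem~\ref{EKL}, and those produce a measure on $X_d$, not the curve of grids you need. The actual proof of \cite[Theorem 4.5]{Shap1} passes to a grid over a compact $A$-orbit $Ax_0$ and applies Berend's rigidity theorem for the action of the commuting hyperbolic automorphisms $A_{x_0}\simeq\bZ^{d-1}$ on $\bR^d/x_0$ (plus a separate treatment of rational grids); none of that machinery, or a substitute for it, appears in your sketch. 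As written, the proposal assumes the essential point rather than proving it.
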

\begin{theorem}\label{theorem 5}
Let $v=(\al_2,\dots,\al_d)^t\in\bR^{d-1}$ be as in Theorem~\ref{theorem 3}. Then $x_v,z_v$ are $A$-regular of periodic type but not $A$-periodic.
\end{theorem}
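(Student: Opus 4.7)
Let $K := \bQ(\al_2, \dots, \al_d) \subset \bR$, of degree $d \geq 3$ by hypothesis. The inclusion furnishes a real embedding of $K$, so $K$ is not totally complex and in particular not a CM field; hence Theorem~\ref{theorem 1} and its generalization Theorem~\ref{theorem 2} apply to lattices attached to $K$.

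The arithmetic content---that $x_v$ and $z_v$ are $A$-regular of periodic type---should follow from Theorem~\ref{theorem 2}. The matrix $h_v$ has all entries in $\set{0,1,\al_2,\dots,\al_d}\subset K$, so the lattice $h_v\bZ^d$ falls within the class handled by Theorem~\ref{theorem 2}, which is precisely the generalization of Theorem~\ref{theorem 1} designed to accommodate realizations of $K$-lattices beyond the classical archimedean-places embedding $\vphi(\Lam)$; the same remark applies to $g_v$ and $z_v$. If direct application of Theorem~\ref{theorem 2} does not suffice, the alternative route is to exhibit some $x_\Lam$ (from Theorem~\ref{theorem 1}) in the closure $\overline{Ax_v}$---letting $a \in A$ diverge along a Dirichlet direction dictated by the archimedean embeddings of a unit $\mu \in \cO_K^*$ should produce a subsequential limit of $ax_v$ that is a number-field lattice---and then invoke Corollary~\ref{generalization of LW} to promote $x_v$ to $A$-regular of periodic type. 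This identification of $x_v, z_v$ with the framework of Theorem~\ref{theorem 2} is the crux of the proof.

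That neither $x_v$ nor $z_v$ is $A$-periodic reduces to a direct stabilizer computation. Writing $a = \diag{\lam_1,\dots,\lam_d}\in A$ and $\Lam' = \diag{\lam_2,\dots,\lam_d}$, a block calculation gives
\begin{equation*}
h_v^{-1}\, a\, h_v = \pa{\begin{array}{cc} \lam_1 & 0 \\ (\Lam' - \lam_1 I_{d-1})v & \Lam' \end{array}},
\end{equation*}
which can lie in $\Ga = \SL_d(\bZ)$ only if every $\lam_i$ is a positive integer; coupled with $\prod_i\lam_i = 1$ this forces $\lam_i = 1$ for all $i$. Hence $\on{Stab}_A(x_v) = \set{I}$, and the $A$-orbit is in bijection with $A \cong \bR^{d-1}$, so it carries no finite $A$-invariant measure and $x_v$ is not $A$-periodic. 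An entirely analogous upper-triangular computation using $g_v^{-1}\, a\, g_v$ handles $z_v$.
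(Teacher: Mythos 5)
Your argument for non-periodicity is fine (the stabilizer computation $h_v^{-1}ah_v=\bigl(\begin{smallmatrix}\lam_1&0\\(\Lam'-\lam_1I)v&\Lam'\end{smallmatrix}\bigr)$ is correct, and the paper does not even bother to spell this part out). But the regularity half has a genuine gap exactly at the point you yourself flag as the crux. Theorem~\ref{theorem 2} applies to points of the form $px_\Lam$ with $p\in P_k^\pm$ and $x_\Lam$ the lattice obtained from the \emph{geometric embedding} $\vphi=(\sig_1,\dots)$ using all archimedean places of $K$. The observation that the entries of $h_v$ or $g_v$ lie in $K$ does not place $h_v\bZ^d$ or $g_v\bZ^d$ in that class: those lattices are unipotent images of $\bZ^d$ built from the single embedding $\sig_1=\mathrm{id}$, which is a very different object from $\vphi(\Lam)$. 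What is actually needed, and what the paper proves, is an explicit factorization: take $\Lam=\on{Span}_\bZ\{1,\al_2,\dots,\al_d\}$ and order $\vphi$ so that $\sig_1$ is the identity; then the first row of the matrix $M=(\vphi(1)\,|\,\vphi(\al_2)\,|\cdots|\,\vphi(\al_d))$ is exactly $(1,\al_2,\dots,\al_d)$, which coincides with the first row of $g_v$, so $g_vM^{-1}$ has first row $e_1^t$ and hence $g_v=cpM$ for a scalar $c$ and some $p\in P_1^-$. This gives $z_v=px_\Lam$, and Theorem~\ref{theorem 2} applies with $k=1$ (coprime to $d$, and $\{a_1(t)\}<T^{(r,s)}$ since $r\ge1$).

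A second, smaller omission: you treat $x_v$ and $z_v$ symmetrically (``the same remark applies''), but $h_v$ does not admit the analogous factorization through a maximal parabolic in any obvious way. The paper instead passes to the dual lattice: $x_v=(z_{-v})^*$ and $A^*=A$, so if $\overline{Az_{-v}}=Lz_{-v}$ is a finite-volume orbit then $\overline{Ax_v}=L^*x_v$. Your proposed fallback (exhibiting some $x_\Lam$ inside $\overline{Ax_v}$ by a Dirichlet-type limiting argument and invoking Corollary~\ref{generalization of LW}) is a legitimate alternative in principle, but as written it is only a sketch of a hope; producing such a limit point is at least as hard as the factorization above and is not carried out. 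As it stands, the proposal does not establish the main claim.
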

Theorem~\ref{theorem 4} is proved at the end of \S\ref{main proofs} and Theorem~\ref{theorem 5} is proved in \S~\ref{applications}.

\section{Lattices coming from number fields}
In this section we study in some detail the lattices coming from number fields, which are the subject of Theorems~\ref{theorem 1} and~\ref{theorem 2}.
We begin by fixing some of the notation that will accompany us through this paper. Throughout this section we fix $d\ge 2$ and $r,s\ge 0$ to
be integers such that $d=r+2s$.
\subsection{Maximal tori in $G$}\label{maximal tori}
Given square matrices $B_1\dots B_n$ of any dimensions, we denote by $$\diag{B_1\dots B_n}$$ the block diagonal square matrix formed by the $B_i$'s.
For a complex number $\om$ we let
$$R_\om=\pa{\begin{array}{ll}
\re{\om}&-\im{\om}\\
\im{\om}&\re{\om}
\end{array}}.
$$
Let
\begin{equation}\label{rstorus}
T^{(r,s)}=\set{\diag{a_1,\dots,a_r,R_{\om_1},\dots,R_{\om_s}}\in G : a_i\in\bR_+, \om_i\in \bC}.
\end{equation}
$T^{(r,s)}$ is the connected component of the identity of a maximal torus in $G$ and any connected component of the identity in a maximal torus in $G$ is conjugate to exactly one of the $T^{(r,s)}$'s. When $s=0$ we denote $A=T^{(d,0)}$. The \textit{split part} of $T^{(r,s)}$ is defined to be
\begin{equation}\label{rssplit}
A_{r,s}=A\cap T^{(r,s)}.
\end{equation}
Let $G_\bC=\SL_d(\bC)$ and $A_\bC$ be the group of diagonal matrices in $G_\bC$.
Set $B=\frac{1}{\sqrt{2}}\pa{\begin{array}{ll}
1&i\\
i&1
\end{array}}$ and denote
\begin{equation}\label{theta}
\theta_{r,s}=\on{diag}\big(\underbrace{1,\dots,1}_{r},\underbrace{B,\dots,B}_{s}\big)\in G_\bC.
\end{equation}
Denote conjugation by $\theta_{r,s}$ in $G_\bC$ by $g\mapsto\tilde{g}$. For a subgroup $H<G_\bC$ we also denote
\begin{equation}\label{conju}
\tilde{H}=\theta_{r,s}H\theta_{r,s}^{-1}.
\end{equation}
The reader would easily verify that
$\tilde{T}^{(r,s)}\subset A_\bC$ and more precisely
\begin{equation}\label{tilde}
g=\diag{a_1,\dots,a_r,R_{\om_1},\dots,R_{\om_s}}\Rightarrow \tilde{g}=\diag{a_1,\dots ,a_r,\om_1,\bar{\om}_1,\dots,\om_s,\bar{\om}_s}.
\end{equation}
Note that $g\mapsto\tilde{g}$ is the identity map on $A_{r,s}$.\\
For $1\le i\ne j\le d$ let $\chi_{i,j}:A_\bC\to \bC^*$ be the character defined by
\begin{equation}\label{chi}
\chi_{ij}\pa{\diag{a_1,\dots,a_d}}=\frac{a_i}{a_j}.
\end{equation}

\subsection{Maximal parabolics}
For $1\le k\le d-1$ let $a_k(t)$ denote the one parameter subgroup of $A$ given by
\begin{equation}\label{a_k}
a_k(t)=\on{diag}\big(\underbrace{e^{(d-k)t},\dots,e^{(d-k)t}}_{k},\underbrace{e^{-kt},\dots,e^{-kt}}_{d-k}\big).
\end{equation}
To the one parameter group $a_k(t)$ we can attach two maximal parabolic subgroups of $G$, namely the weak-unstable and weak-stable horospherical subgroups of $a_k(1)$. More precisely, let
\begin{equation}\label{P_k}
P_k^+=\set{\pa{\begin{array}{ll}
B&C\\
0&D
\end{array}}\in G}\;\;;\;\;P_k^-=\set{\pa{\begin{array}{ll}
B&0\\
C&D
\end{array}}\in G},
\end{equation}
where in the above equations $B$ and $D$ are square matrices of dimensions $k,d-k$ respectively and $C$ and $0$ are rectangular matrices
of the obvious dimensions, $0$ denoting here the matrix all of whose entries equal zero.
\subsection{Geometric embeddings}\label{geometric embeddings}
Let $K$ be a number field of degree $d$ over $\bQ$. We say that $K$\textit{ is of type} $(r,s)$ if it has $r$ distinct real embeddings $\sig_i:K\to\bR,\;\, i=1\dots r$ and $s$ non-conjugate complex embeddings $\sig_i:K\to \bC,\;\,i=r+1\dots r+s$. A \textit{geometric embedding} of $K$ in $\bR^d$ is a map $\vphi:K\to\bR^d$ whose coordinates are the real embeddings and the real and
imaginary parts of the non-conjugate complex embeddings; i.e.\ up to a permutation of the coordinates it is the following map:
\begin{equation}\label{g.e}
\vphi=\big(\sig_1,\dots,\sig_r,\underbrace{\dots\re\sig_{r+i},\im\sig_{r+i}\dots}_{s}\big).
\end{equation}
We shall always work with geometric embeddings as in~\eqref{g.e} and will not allow any permutation for ease of notation.
Let $\al_1,\dots,\al_d$ be a basis of $K$ over $\bQ$. The $\bZ$-module $\Lam=\on{Span}_\bZ\set{\al_i}$ is called a $\textit{lattice}$ in $K$. It is well known that the geometric embedding of $\Lam$, $\vphi\pa{\Lam}\subset \bR^d$, is a lattice in $\bR^d$. Hence, by normalizing the covolume to be one,  $\Lam$ defines a point in $X_d$ which we denote by $x_\Lam$. We refer to such a lattice $x_\Lam$ as a \textit{lattice coming from a number field of
type} $(r,s)$.
We now have enough terminology to state one of the main results in this paper. Theorem~\ref{theorem 5} is a consequence of the following theorem which
generalizes Theorem~\ref{theorem 1} when $r>0$. It is proved together with Theorem~\ref{theorem 1} in the next section.
\begin{theorem}\label{theorem 2}
Let $x_\Lam\in X_d$ be a lattice coming from a number field of type $(r,s)$, let $k$ be a number co-prime to $d$ such that $\set{a_k(t)}<T^{(r,s)}$, and let $p\in P_k^+\cup P_k^-$. Then $px_\Lam$ is
$A$-regular of periodic type.
\end{theorem}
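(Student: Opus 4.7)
The plan is to find a closed subgroup $A\subset L\subset G$ such that $Lpx_\Lam$ is closed and periodic, and then to verify $\overline{Apx_\Lam}=Lpx_\Lam$. The two main inputs are: periodicity of the orbit $T^{(r,s)}x_\Lam$, coming from the number field structure; and horospherical contraction dynamics along the one-parameter subgroup $\{a_k(t)\}<T^{(r,s)}$.

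First I would verify that $T^{(r,s)}x_\Lam$ is a closed periodic orbit. By Dirichlet's unit theorem applied to the order of $K$ spanned by $\Lam$, the group of norm-one units embeds under $\vphi$ as a lattice in $T^{(r,s)}$ modulo its compact part (the product of the $SO(2)$-factors in the $R_\om$-blocks), and each such unit stabilizes $x_\Lam$. Hence $T^{(r,s)}x_\Lam\cong T^{(r,s)}/\Ga_T$ (for $\Ga_T$ the stabilizer in $T^{(r,s)}$) is compact. Conjugating, $pT^{(r,s)}p^{-1}\cdot px_\Lam=pT^{(r,s)}x_\Lam$ is a periodic orbit of the conjugated torus through $px_\Lam$.

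Next I would apply the dynamics. Using the Levi decomposition of $P_k^\pm$, write $p=l_0u_0$ with $l_0$ in the centralizer $S(\GL_k\times\GL_{d-k})$ of $\{a_k(t)\}$ and $u_0$ in the unipotent radical. For $p\in P_k^+$, the identity
\[
a_k(-t)px_\Lam \;=\; l_0\,\bigl(a_k(-t)u_0a_k(t)\bigr)\,a_k(-t)x_\Lam,
\]
the horospherical contraction $a_k(-t)u_0a_k(t)\to e$ as $t\to\infty$, and compactness of $T^{(r,s)}x_\Lam$ jointly imply, via subsequence extraction, $l_0\tau x_\Lam\in\overline{Apx_\Lam}$ for every $\tau$ arising as a subsequential limit of $a_k(-t_n)\bmod\Ga_T$. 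Varying the sequence, one obtains $l_0T_0x_\Lam\subset\overline{Apx_\Lam}$ for a closed subgroup $T_0\subset T^{(r,s)}$ containing $\{a_k(t)\}$; the case $p\in P_k^-$ is handled symmetrically with the opposite time direction.

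The most delicate step is to identify $L$ and verify both inclusions with $Lpx_\Lam$ closed and periodic. I would take $L$ to be the closed subgroup of $G$ generated by $A$ together with $pT^{(r,s)}p^{-1}$; being generated by real algebraic tori, $L$ is reductive. The $\bQ$-structure of $L$ inherited from the algebraic torus $\on{Res}_{K/\bQ}\bG_m\cap\SL_d$, combined with the fact that $p$ lies in the $\bQ$-defined parabolic $P_k^\pm$, should produce a lattice in $L\cap\on{Stab}(px_\Lam)$ of the necessary rank, giving periodicity of $Lpx_\Lam$. The coprimality hypothesis $\gcd(k,d)=1$ is used to ensure this algebraic structure is non-degenerate (it prevents $a_k$ from lying in a proper $\bQ$-subgroup that would shrink the orbit closure). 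Then $A$-invariance of the closure, iteratively applied to the translates $l_0T_0x_\Lam$, should sweep out all of $Lpx_\Lam$. The main obstacle is this arithmetic/algebraic identification of $L$: simultaneously showing that $Lpx_\Lam$ is closed and periodic, and that the $A$-orbit is dense in it.
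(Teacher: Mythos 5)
Your opening moves match the paper's: compactness of $T^{(r,s)}x_\Lam$ via Dirichlet's unit theorem is exactly Lemma~\ref{compact orbits}, and conjugating $p$ by $a_k(\mp t)$ to contract its unipotent part and deposit a translate $p'H'x_\Lam$ of a (piece of a) torus orbit inside $\overline{Apx_\Lam}$ is the first paragraph of the paper's proof (with your $l_0$ playing the role of the paper's $p'=\lim a_k(t)pa_k(-t)$ and your $T_0$ the role of $H'=\overline{\{a_k(t)\}x_\Lam}$-stabilizing group). But the step you flag as ``the most delicate'' is where the proof actually lives, and your proposed route through it does not work as stated. The group $L$ generated by $A$ and $pT^{(r,s)}p^{-1}$ need not be closed or reductive, there is no reason its orbit through $px_\Lam$ should be closed (closedness of $T^{(r,s)}x_\Lam$ is an arithmetic fact about $x_\Lam$ that is destroyed by conjugating the torus by a generic $p$ while keeping the same base point), and no mechanism is offered for the density of $Ax$ in $Lpx_\Lam$ --- which is precisely the content of the theorem. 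In general the correct $L$ cannot be written down in advance; it emerges from measure rigidity.

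The paper's resolution is measure-theoretic and entirely different. One takes the $H$-invariant probability measure $\nu$ on the compact orbit $Hy\subset\overline{Apx_\Lam}$ (with $y=p'x_\Lam$, $H=p'H'p'^{-1}$), averages it over F\o lner sets inside a carefully chosen open cone of $A$, and shows: (i) no escape of mass, via the Eskin--Mozes--Shah theorem, which requires compactness of $C_G(H)y$ --- this is where Lemma~\ref{centralizer 2} and hence the coprimality of $k$ and $d$ enter (to guarantee $C_G(H')=T^{(r,s)}$, not, as you suggest, merely to keep $a_k$ out of a proper $\bQ$-subgroup); (ii) invariance of the limit under a one-parameter unipotent subgroup (Lemma~\ref{gaining unipotents}, a root-space contraction argument requiring $H\not\subset A$); then (iii) the Ratner-type classification Theorem~\ref{EKL} identifies the limit as Haar measure on a periodic orbit of a reductive $L\supset A$, (iv) Prasad--Raghunathan (Theorem~\ref{PR}) produces a compact $A$-orbit inside it, and (v) the Lindenstrauss--Weiss theorem (Theorem~\ref{LW}) converts ``$\overline{Ax}$ contains a compact $A$-orbit'' into ``$x$ is $A$-regular of periodic type.'' You would also need to handle separately the degenerate case where the limiting group $H$ lands inside $A$ (forcing $s=0$ and $p'\in N_G(A)$), which the paper dispatches directly with Theorem~\ref{LW}. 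Without some substitute for steps (i)--(v), your outline does not close.
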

\subsection{A few lemmas}\label{few lemmas}
We now describe the connection between lattices $x_\Lam$ coming from number fields of type $(r,s)$, and the tori $T^{(r,s)}$. We shall shortly prove that the orbit $T^{(r,s)}x_\Lam$, is compact and homeomorphic to $\bT^{d-1}$. Moreover, we shall analyze to some extent the closure of the orbit $A_{r,s}x_\Lam$ in $T^{(r,s)}x_\Lam$.
We shall use hereafter the following notation: If a group $H$ acts on a set $X$ then for $x\in X$, $H_x$ denotes the stabilizer
of $x$ in $H$.
As $T^{(r,s)}$ is isomorphic as a group to  $\bR^{r+s-1}\times \bT^s$ we have the following basic lemma which
is left without proof.
\begin{lemma}\label{regularity of torus}
Let $x \in X_d$ be given. The orbit $T^{(r,s)}x$ is compact if and only if the stabilizer $T^{(r,s)}_{x}$ contains a free abelian group with $r + s - 1$ generators. Moreover, if $T^{(r,s)}x$ is compact then for any closed connected subgroup $H<T^{(r,s)}$, $\overline{Hx}=Lx$ for some closed connected subgroup $H<L<T^{(r,s)}$.
\end{lemma}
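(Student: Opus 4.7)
The plan is to leverage the structural isomorphism $T^{(r,s)}\cong\bR^{r+s-1}\times\bT^s$ of real Lie groups, under which $A_{r,s}$ is identified with the first factor and the phases $\arg\om_i$ parametrize the torus factor. In particular $T^{(r,s)}$ is connected abelian of dimension $(r+s-1)+s=d-1$. I would begin by observing that the stabilizer $\Gamma:=T^{(r,s)}_x$ of any point $x\in X_d$ is discrete, since the full stabilizer of a lattice in $G$ is a conjugate of $\SL_d(\bZ)$.

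For the equivalence, my first step is to show that any discrete subgroup of $T^{(r,s)}$ projects to a discrete subgroup of $\bR^{r+s-1}$ under the projection $\pi$ killing the compact factor $\bT^s$: if a sequence of nonzero elements $(v_n,\theta_n)\in\Gamma$ had $v_n\to 0$, then passing to a subsequence making $\theta_n$ Cauchy (by compactness of $\bT^s$) and taking successive differences would produce nontrivial elements of $\Gamma$ tending to the identity, contradicting discreteness. Combined with the fact that $\Gamma\cap\bT^s$ is finite (being a discrete subgroup of a compact torus), this shows that $\pi(\Gamma)$ is a discrete subgroup of $\bR^{r+s-1}$ of the same $\bZ$-rank as $\Gamma$ modulo torsion. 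Then $T^{(r,s)}/\Gamma$, which is homeomorphic to $T^{(r,s)}x$, fibers over $\bR^{r+s-1}/\pi(\Gamma)$ with compact fiber $\bT^s/(\Gamma\cap\bT^s)$, so it is compact iff $\pi(\Gamma)$ is a cocompact lattice in $\bR^{r+s-1}$. This last condition is equivalent to $\Gamma$ containing a free abelian subgroup of rank $r+s-1$: any such subgroup projects via $\pi$ injectively (its kernel would be torsion) onto a rank $r+s-1$ discrete subgroup of $\bR^{r+s-1}$, hence a lattice; conversely, a $\bZ$-basis for the lattice $\pi(\Gamma)$ lifts to a free abelian subgroup of $\Gamma$ of rank $r+s-1$.

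For the moreover statement, assuming $T^{(r,s)}x$ is compact, the orbit map descends to a homeomorphism $T^{(r,s)}/\Gamma\to T^{(r,s)}x$, exhibiting the latter as a compact connected abelian Lie group of dimension $d-1$, i.e.\ a torus $\bT^{d-1}$. Letting $q\colon T^{(r,s)}\to T^{(r,s)}x$ denote the quotient map, for a closed connected subgroup $H<T^{(r,s)}$ the image $q(H)$ is a connected subgroup of $\bT^{d-1}$, and its closure $S:=\overline{q(H)}$ is a closed connected subgroup, hence a subtorus by the classification of closed connected subgroups of a torus. I would then take $L$ to be the identity component of $q^{-1}(S)$: this $L$ is closed and connected, contains $H$ (since $H$ is connected and sits inside $q^{-1}(S)$), and satisfies $q(L)=S$ because $L$ is open in $q^{-1}(S)$ (being the identity component of a Lie group), so $q(L)$ is an open subgroup of the connected group $S$, hence equal to $S$. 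Pushing back through the homeomorphism yields $\overline{Hx}=Lx$ as required.

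The one point that deserves care, and is the main obstacle beyond formal manipulations, is the claim that a discrete subgroup of $T^{(r,s)}$ projects to a discrete subgroup of $\bR^{r+s-1}$; once that is secured, both halves of the lemma reduce to the standard structure theory of closed subgroups of $\bR^a\times\bT^b$, applied respectively to the stabilizer $\Gamma$ (in the first part) and to the preimage of the subtorus $S=\overline{q(H)}$ (in the second).
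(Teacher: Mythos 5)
Your proof is correct, and it follows exactly the route the paper indicates: the authors state the lemma as a basic consequence of the isomorphism $T^{(r,s)}\cong\bR^{r+s-1}\times\bT^s$ and leave it without proof, and your argument (discreteness of the stabilizer, discreteness of its projection to the $\bR^{r+s-1}$ factor, and the structure theory of closed subgroups of $\bR^a\times\bT^b$) is the standard way to fill in the details. The only step you use silently is that a compact (hence closed) orbit of a second countable locally compact group is homeomorphic to $T^{(r,s)}/T^{(r,s)}_x$ via the orbit map, which is the standard fact needed for the ``only if'' direction and for transporting closures between $T^{(r,s)}/\Gamma$ and the orbit.
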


Let $K$ be a number field of type $(r,s)$ with geometric embedding $\vphi$ as above. Let $\psi:K\to \on{M}_d(\bR)$ be the map defined by (recall the notation of \S\S~\ref{maximal tori})
\begin{equation}\label{psi}
\psi(\alpha)=\diag{\sig_1(\al),\dots,\sig_r(\al),R_{\sig_{r+1}(\al)},\dots,R_{\sig_{r+s}(\al)}}.
\end{equation}
Observe that if we denote multiplication by $\al$ in $K$ by $m_\al$, then the following diagram commutes
\begin{equation}\label{commutative}
\xymatrix{K\ar[r]^\vphi\ar[d]_{m_\al}&\bR^d\ar[d]^{\psi(\al)}\\
K\ar[r]_\vphi&\bR^d.}
\end{equation}
The \textit{associated order} of a lattice $\Lam\subset K$ is defined to be
$$\cO_\Lam=\set{\al\in K:m_\al(\Lam)\subset \Lam}.$$
The reader would easily convince himself that $\cO_\Lam$ is a subring of $K$ and that the group of units of this ring is given by
$\cO^*_\Lam=\set{\al\in K :m_\al(\Lam)=\Lam}.$ We denote
\begin{equation}\label{positive units}
\cO_{\Lam,+}^*=\set{\al\in\cO_\Lam^*:\sig_i(\al)>0, i=1\dots r}.
\end{equation}
It follows from~\eqref{commutative},~\eqref{positive units} that $\cO_{\Lam,+}^*$ is embedded via $\psi$ in the stabilizer of $x_\Lam$ in $T^{(r,s)}$
(note that the determinant of $\psi(\al)$ is equal to 1 for any $\al\in\cO_{\Lam,+}^*$). In fact it is not hard to verify that this embedding is onto;
i.e.\
\begin{equation}\label{stab}
T^{(r,s)}_{x_\Lam}=\psi\pa{\cO_{\Lam,+}^*}.
\end{equation}
$\cO_{\Lam,+}^*$ is a subgroup of finite index in $\cO_\Lam^*$ and hence by Dirichlet's unit theorem
\begin{equation}\label{Dirichlet's}
\cO_{\Lam,+}^*\simeq \mu\times \bZ^{r+s-1},
\end{equation}
where $\mu$ is a finite group of roots of unity. Equations~\eqref{stab} and~\eqref{Dirichlet's} together with
Lemma~\ref{regularity of torus} imply the following
\begin{lemma}\label{compact orbits}
Let $x_\Lam\in X_d$ be a lattice coming from a number field of type $(r,s)$, then the orbit $T^{(r,s)}x_\Lam$ is compact.
\end{lemma}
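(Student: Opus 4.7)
The plan is to combine the three ingredients developed in the preceding discussion into a one-step deduction: Lemma~\ref{regularity of torus} (the abstract criterion for compactness of torus orbits), equation~\eqref{stab} (the explicit description of the stabilizer in number-theoretic terms), and equation~\eqref{Dirichlet's} (Dirichlet's unit theorem applied to the associated order, with the passage to totally positive units already handled in the text just before~\eqref{Dirichlet's}).

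Concretely, I would first invoke Lemma~\ref{regularity of torus} to reduce compactness of $T^{(r,s)}x_\Lam$ to producing a free abelian subgroup of rank $r+s-1$ inside the stabilizer $T^{(r,s)}_{x_\Lam}$. By~\eqref{stab}, this stabilizer equals $\psi(\cO_{\Lam,+}^*)$, and by~\eqref{Dirichlet's} the group $\cO_{\Lam,+}^*$ already contains a free abelian subgroup $\bZ^{r+s-1}$. It remains only to verify that $\psi$ is injective on this free factor, which is immediate: $\psi : K \to \on{M}_d(\bR)$ is a nonzero ring homomorphism from the field $K$ (for instance $\psi(1) = I$), hence has trivial kernel, so its restriction to $\bZ^{r+s-1}$ maps it isomorphically onto a free abelian subgroup of $T^{(r,s)}_{x_\Lam}$ of rank $r+s-1$. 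Lemma~\ref{regularity of torus} then gives compactness.

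There is no substantive obstacle here. The lemma is essentially a bookkeeping statement whose role is to record the immediate consequence of~\eqref{stab} and~\eqref{Dirichlet's} in the framework of Lemma~\ref{regularity of torus}; the only minor point to remember is that the shift from $\cO_\Lam^*$ to its finite-index subgroup $\cO_{\Lam,+}^*$ does not reduce the free rank, which is precisely why~\eqref{Dirichlet's} is formulated for the latter.
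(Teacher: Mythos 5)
Your proposal is correct and is precisely the argument the paper intends: the lemma is stated as an immediate consequence of~\eqref{stab}, \eqref{Dirichlet's}, and Lemma~\ref{regularity of torus}, exactly the three ingredients you combine. Your extra remark that $\psi$ is injective (being a nonzero ring homomorphism out of a field) is a small detail the paper leaves implicit, but it is the right justification.
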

In order to state the next lemma we introduce some more terminology: A subgroup $H<A_\bC$ is an \textit{equiblock diagonal group} if there are numbers
$d_1,d_2$ such that $d=d_1d_2$ and a partition
of the indices $\set{1\dots d}$ into subsets, $I_\ell$, $\ell=1\dots d_2$ of equal size $d_1$ such that for any $\ell$ and any $i\ne j\in I_\ell$ $H<\on{Ker}\pa{\chi_{ij}}.$ In that case $d_1$ is referred to as the \textit{size of the block}.
\begin{lemma}\label{equiblock}
Let $x_\Lam\in X_d$ be a lattice coming from a number field $K$ of type $(r,s)$. Let $H<T^{(r,s)}$ be a subgroup such that $Hx_\Lam$ is closed.
Suppose $\tilde{H}<\on{Ker}(\chi_{ij})$ (see~\eqref{conju} for notation) for some $i\ne j$. Then $\tilde{H}_{x_\Lam}$ is an equiblock diagonal group with block size $d_1>1$. Moreover, if $d=d_1d_2$ then there is a subfield $K'\subset K$ of degree $d_2$ over $\bQ$ such that $H_{x_\Lam}\subset \psi\pa{\cO_{\Lam,+}
\cap K'}.$
\end{lemma}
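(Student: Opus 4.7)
The plan is to translate the claim into Galois theory. By~\eqref{stab}, $H_{x_\Lam} = H \cap \psi(\cO_{\Lam,+}^*) = \psi(U)$, where
$$U := \set{\al \in \cO_{\Lam,+}^* : \psi(\al) \in H}$$
is a subgroup of $\cO_{\Lam,+}^*$. Order the $d$ embeddings of $K$ into $\bC$ as $\tau_1,\dots,\tau_d$ so that $\tau_k(\al)$ is the $k$-th diagonal entry of $\tilde\psi(\al)$, as prescribed by~\eqref{tilde}; these embeddings are pairwise distinct. Define an equivalence on $\set{1,\dots,d}$ by $k \sim \ell$ iff $\tau_k(u) = \tau_\ell(u)$ for every $u \in U$. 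Since $\tilde H_{x_\Lam} \subseteq \tilde H \subseteq \ker(\chi_{ij})$, the hypothesis precisely reads $\tau_i(u) = \tau_j(u)$ for all $u \in U$, i.e.\ $i \sim j$; together with $\tau_i \ne \tau_j$ (from $i \ne j$), this supplies a nontrivial equivalence.

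Next, Galois theory controls the sizes of the equivalence classes uniformly. Let $L \subseteq \bC$ be a Galois closure of (an image of) $K$ over $\bQ$, set $G_L = \on{Gal}(L/\bQ)$, and fix $\iota : K \hookrightarrow L$; each $\tau_k$ then has the form $g_k \circ \iota$ for some $g_k \in G_L$, unique modulo $H_K := \on{Gal}(L/\iota(K))$. Let $K' = \bQ(U) \subseteq K$ be the subfield generated by $U$, and set $H_{K'} = \on{Gal}(L/\iota(K')) \supseteq H_K$. An element of $G_L$ fixes $\iota(U)$ pointwise iff it fixes the field $\iota(K')$ generated by this set, so
$$ k \sim \ell \iff g_\ell^{-1} g_k \in H_{K'} \iff g_k H_{K'} = g_\ell H_{K'}. $$
Consequently the $\sim$-classes are precisely the fibres of the natural surjection $G_L/H_K \twoheadrightarrow G_L/H_{K'}$, each of cardinality $d_1 := [H_{K'}:H_K] = [K:K']$, and there are $d_2 := [K':\bQ]$ of them, with $d_1 d_2 = d$.

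This immediately yields the equiblock decomposition: partition $\set{1,\dots,d}$ into the $d_2$ equivalence classes, each of size $d_1$; by construction $\tilde H_{x_\Lam} \subseteq \ker(\chi_{k\ell})$ whenever $k,\ell$ lie in a common class, which is the required equiblock condition. The strict inequality $d_1 > 1$ is forced by the nontrivial coincidence $i \sim j$ produced above. Finally, the inclusion $U \subseteq K'$ is tautological since $K' = \bQ(U)$; combined with $U \subseteq \cO_{\Lam,+}^*$ it gives $H_{x_\Lam} = \psi(U) \subseteq \psi(\cO_{\Lam,+}^* \cap K')$, which is the subfield assertion. The only genuinely delicate point in the argument is the equality of all block sizes, and this is exactly what the coset description delivered by Galois theory provides; no other obstacle arises.
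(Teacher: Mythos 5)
Your proof is correct, and it follows the same essential strategy as the paper: identify the stabilizer $H_{x_\Lam}$ with $\psi(U)$ for the subgroup $U\subset\cO_{\Lam,+}^*$, use the hypothesis $\tilde H<\on{Ker}(\chi_{ij})$ to force two embeddings to agree on $U$, and then invoke Galois theory to organize the $d$ embeddings of $K$ into equal-size blocks determined by restriction to a subfield. The one substantive difference is the choice of $K'$: you take $K'=\bQ(U)$, whereas the paper takes the (a priori larger) equalizer subfield $K'=\{\al\in K:\tau_i(\al)=\tau_j(\al)\}$. Both contain $U$, both are proper subfields of $K$, and both yield a valid equiblock decomposition with block size $[K:K']>1$ — yours being the finest such decomposition (largest possible $d_1$), the paper's just one that works. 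Your passage to a Galois closure and the coset-fibre argument spells out explicitly the standard fact that the paper simply asserts (embeddings of $K$ agreeing on $K'$ come in equal packets of size $[K:K']$), so your write-up is more self-contained, but the mathematical content matches the paper's proof.

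One small remark: the lemma statement has $\cO_{\Lam,+}\cap K'$, but as in the paper's proof of Lemma~\ref{centralizer 1} this should read $\cO_{\Lam,+}^*\cap K'$, which is what you in fact prove.
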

\begin{proof}
 From~\eqref{tilde},~\eqref{stab} it follows that
$$\tilde{T}^{(r,s)}_{x_\Lam}=\set{\on{diag}\big(\sig_1(\al)\dots\sig_r(\al),\underbrace{\dots\sig_{r+i}(\al),\bar{\sig}_{r+i}(\al)\dots}_{s}\big):\al\in\cO_{\Lam,+}^*}.$$
Hence, the assumption $\tilde{H}<\on{Ker}\pa{\chi_{ij}}$ implies that there are two distinct embeddings of $K$ $\tau,\eta$ (corresponding to the $i$'th and $j$'th diagonal entries) such that if $\al\in\cO_{\Lam,+}^*$ satisfies $\tilde{\psi}(\al)\in \tilde{H}$ then $\tau(\al)=\eta(\al)$. Let $K'=\set{\al\in K : \tau(\al)=\eta(\al)}$. Note that $K'$ is a proper subfield of $K$. Let $d_1=\deg\pa{K/K'}$ and $d_2=\deg\pa{K'/\bQ}.$ The different embeddings of $K$
partition into $d_2$ sets of equal size such that if $\tau'$ and $\eta'$ belong to the same partition set, then their restrictions to $K'$ coincide. The lemma now follows.
\end{proof}
The following two lemmas will be needed to complete the proofs of Theorems~\ref{theorem 1} and~\ref{theorem 2}. Recall that
a number field $K$ is a CM field if $K$ is of type $(0,s)$ and contains a totally real subfield of degree $s=d/2$ over $\bQ$.
\begin{lemma}\label{centralizer 1}
Let $x_\Lam\in X_d$ be a lattice coming from a number field $K$ of type $(r,s)$ with $s>0$. Let $H<T^{(r,s)}$ be the subgroup satisfying $\overline{A_{r,s}x_\Lam}=Hx_\Lam$. Then the following are equivalent:
\begin{enumerate}
	\item $H$ strictly contains $A_{r,s}$.
	\item $C_G(H)=T^{(r,s)}$.
	\item $K$ is not a CM field.
\end{enumerate}
\end{lemma}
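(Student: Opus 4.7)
The plan is to identify $T^{(r,s)}$ with $\bR^{r+s-1}\times\bT^s$ via logarithms of the $\av{\om_i}$'s and arguments of the $\om_i$'s, so that $A_{r,s}$ corresponds to the $\bR^{r+s-1}$-factor. By Dirichlet and~\eqref{stab}, $T^{(r,s)}_{x_\Lam}$ has rank $r+s-1$; let $\Theta\le\bT^s$ denote its projection to the angular factor. A short computation (closing up $A_{r,s}\cdot T^{(r,s)}_{x_\Lam}=\bR^{r+s-1}\times\Theta$ inside $T^{(r,s)}$) shows $H=A_{r,s}\cdot\overline{\Theta}^{\,0}$, so (1) is equivalent to $\Theta$ being infinite. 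Moreover, $C_G(A_{r,s})$ is the block-diagonal group $S(\GL_1(\bR)^r\times\GL_2(\bR)^s)$, and in each $2\times 2$ block the centralizer of $H$ is all of $\GL_2(\bR)$ when the restriction of $H$ to that block is scalar, and otherwise is the $2$-dimensional rotation--scaling group $\set{R_{\om'}}\subset T^{(r,s)}$. Consequently (2) is equivalent to the statement ($2'$) that $\Theta$ projects to an infinite subgroup of every $\bT$-factor of $\bT^s$, and $(2')\Rightarrow(1)$ is immediate.

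For $(1)\Leftrightarrow(3)$: if $\Theta$ is finite, some finite-index subgroup of $\cO_{\Lam,+}^*$ lies in the subfield $K_0:=\bigcap_{j=1}^s\ker(\sig_{r+j}-\bar\sig_{r+j})$, which is a proper subfield of $K$ (since $s>0$). Every embedding of $K$ restricts to a real embedding of $K_0$, so $K_0$ is totally real of some degree $d_0$; Dirichlet applied to $K_0$ yields $d_0-1\ge r+s-1$, while $d_0\mid d$ and $d_0<d$ forces $d_0\le d/2$. Combining $d_0\ge r+s$ with $d_0\le(r+2s)/2$ gives $r=0$ and $d_0=s$, so $K$ is CM. Conversely, if $K$ is CM with maximal totally real subfield $K_0$ of degree $s$, the Dirichlet ranks of $\cO_K^*$ and $\cO_{K_0}^*$ coincide, so $\cO_{K_0}^*$ has finite index in $\cO_K^*$ modulo torsion; units of $K_0$ are totally real, hence their squares have $\sig_{r+j}>0$ for all $j$, making $\Theta$ finite.

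For $(1)\Rightarrow(2)$: assume $H\supsetneq A_{r,s}$ but $\Theta$ projects to a finite subgroup of the $i$-th $\bT$-factor. Using~\eqref{tilde}, $\tilde H$ lies in $\ker\chi_{r+2i-1,\,r+2i}$, so Lemma~\ref{equiblock} gives $H_{x_\Lam}\subseteq\psi(\cO_{\Lam,+}^*\cap K')$ for a proper subfield $K'\subsetneq K$ of some degree $d_2=d/d_1\le d/2$. Since $Hx_\Lam$ is compact (it lies in the compact $T^{(r,s)}x_\Lam$), $H_{x_\Lam}$ has rank $r+s-1$; applying Dirichlet to $K'$ yields $r'+s'\ge r+s$ while $r'+2s'=d_2\le(r+2s)/2$. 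Subtracting gives $s'+r/2\le 0$, forcing $r=s'=0$ and $r'=d_2=s$; thus $K$ is CM, contradicting the equivalence $(1)\Leftrightarrow(3)$ already proved. The main obstacle is that both nontrivial directions rest on the same rank-arithmetic device: the proper subfield that arises ($K_0$ or $K'$) must be totally real with Dirichlet rank at least $r+s-1$, which squeezes its degree into exactly $s$ and forces $[K:K_0]=2$, i.e.\ the CM configuration. A minor technical point is matching the conjugation conventions of~\eqref{tilde} to extract the right block-character from the geometric hypothesis.
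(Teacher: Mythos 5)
Your proof is correct, and while it runs on the same engine as the paper's --- the identification $T^{(r,s)}_{x_\Lam}=\psi(\cO^*_{\Lam,+})$, Lemma~\ref{equiblock}, and Dirichlet rank arithmetic forcing a totally real subfield of index $2$ --- the logical organization differs in two useful ways. First, you prove $(3)\Rightarrow(1)$ directly via the subfield $K_0=\bigcap_j\ker(\sig_{r+j}-\bar\sig_{r+j})$, observing that finiteness of the angular group $\Theta$ puts a rank-$(r+s-1)$ unit group inside the totally real field $K_0$ and then squeezing $[K_0:\bQ]$ between $r+s$ and $d/2$; the paper instead obtains $(3)\Rightarrow(1)$ only by composing $(3)\Rightarrow(2)\Rightarrow(1)$, so your argument gives a self-contained proof of the $(1)\Leftrightarrow(3)$ equivalence that never touches the centralizer. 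Second, in the hard direction the paper first extracts $r=0$, $d_1=2$, $H=A_{0,s}$ from a geometric compactness argument (elements of $A_{r,s}$ must be brought back to a compact set by the equiblock lattice $\tilde H_{x_\Lam}$) before counting ranks; you replace that step entirely by the inequality pair $r'+s'\ge r+s$ and $r'+2s'\le (r+2s)/2$, which forces $r=s'=0$ by pure arithmetic --- a genuine simplification. Your explicit parametrization $H=A_{r,s}\cdot\overline{\Theta}^{\,0}$ and the block-by-block computation of $C_G(H)$ inside $S(\GL_1(\bR)^r\times\GL_2(\bR)^s)$ is an equivalent (and arguably more transparent) substitute for the paper's criterion that $C_G(H)\supsetneq T^{(r,s)}$ iff $\tilde H\subset\ker\chi_{ij}$ for some $i\ne j$; both versions share the same harmless imprecision that $C_G(H)$ is really the full (disconnected) maximal torus rather than its identity component $T^{(r,s)}$.
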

\begin{proof}
We first show (3) $\Rightarrow$ (2). Suppose $K$ is not CM. $C_G(H)$ is strictly larger than $T^{(r,s)}$ if and only if $H$ does not contain any regular elements or equivalently, there exist $1\le i\ne j\le d$ such that $\tilde{H}<\on{Ker}(\chi_{ij})$. Lemma~\ref{equiblock} implies that in this case
$\tilde{H}_{x_\Lam}$ is an equiblock diagonal group with block size $d_1>1$. On the other hand $A_{r,s}=\tilde{A}_{r,s}<\tilde{H}$ by definition. As the orbit $Hx_\Lam$ is compact we conclude that $\tilde{H}/\tilde{H}_{x_\Lam}$ is compact too and in particular any element of $A_{r,s}$ can be brought to a
compact set when multiplied by an appropriate element of $\tilde{H}_{x_\Lam}$. It follows
that $r=0$, $d_1=2$, and that $\tilde{H}_{x_\Lam}\subset A_{0,s}$. This implies that $A_{0,s}x_\Lam$ is compact and that $H=A_{0,s}$ by definition. Lemma~\ref{equiblock} also implies that there is a subfield $K'\subset K$ of degree $d/2=s$ over $\bQ$ such that
$$H_{x_\Lam}\subset \psi\pa{\cO_{\Lam,+}^*\cap K'}.$$
As $H=A_{0,s}\simeq \bR^{s-1}$ and the quotient $H/H_{x_\Lam}$ is compact, we conclude that the rank of the group $\cO_{\Lam,+}^*\cap K'$ must be at least $s-1$. On the other hand, it follows from Dirichlet's unit theorem that the rank of this group is bounded above by $s-1$ and equality holds if and only if $K'$ is totally real. This implies that $K'$ is indeed totally real and that $K$ is a quadratic totally complex extension of it, i.e.\  that $K$ is a CM field which contradicts our assumption.\\
The implication (2) $\Rightarrow$ (1) is obvious since we assume $s>0$ and so the centralizer of $A_{r,s}$ strictly contains $T^{(r,s)}$. Finally, to see that (1) $\Rightarrow$ (3), assume that $K$ is a CM field. It follows that $\on{Stab}_{A_{0,s}}(x_\Lam)=\psi\pa{\cO_{\Lam,+}}$.
Dirichlet's unit theorem implies that $\on{rank}(\cO_{\Lam,+}^*)=s-1$, hence $A_{0,s}x_\Lam$ is compact and by definition $H=A_{0,s}$.
\end{proof}
\begin{lemma}\label{centralizer 2}
Let $x_\Lam\in X_d$  be a lattice coming from a number field $K$ of type $(r,s)$. Let $k$ be a number co-prime to $d$ such that $\set{a_k(t)}<A_{r,s}$, and let $H'<T^{(r,s)}$ be the subgroup satisfying $\overline{\set{a_k(t)x_\Lam}}_{t\in\bR}=H'x_\Lam$. Then $C_G(H')=T^{(r,s)}$.
\end{lemma}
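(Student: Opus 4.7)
The plan is to mirror the proof of Lemma~\ref{centralizer 1}, using Lemma~\ref{equiblock} as the structural input, with the coprimality hypothesis $\gcd(k,d)=1$ playing the role that the hypothesis $A_{r,s}\subset\tilde H$ played there. First I would suppose toward contradiction that $C_G(H')\supsetneq T^{(r,s)}$. This is equivalent to saying that no element of $\tilde H'$ is regular in $A_\bC$, i.e., $\tilde H'\subset\bigcup_{i\ne j}\on{Ker}(\chi_{ij})$. Since each intersection $\on{Ker}(\chi_{ij})\cap\tilde T^{(r,s)}$ is a proper closed subgroup of $\tilde T^{(r,s)}$, and a connected real Lie group cannot be written as a finite union of proper closed Lie subgroups, this forces $\tilde H'\subset \on{Ker}(\chi_{ij})$ for some single pair $i\ne j$.

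Applying Lemma~\ref{equiblock} then produces a proper subfield $K'\subsetneq K$ with $d_1=[K:K']>1$ and $d_2=[K':\bQ]=d/d_1$, together with the partition of $\{1,\dots,d\}$ into $d_2$ blocks of equal size $d_1$ (two indices belong to the same block iff the corresponding embeddings of $K$ restrict equally to $K'$) and the containment $H'_{x_\Lam}\subset\psi(\cO_{\Lam,+}^*\cap K')$.

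The heart of the argument is a short linear-algebra calculation. Let $\pi_A:T^{(r,s)}\to A_{r,s}$ be the projection onto the split part (quotient by the maximal compact subgroup of $T^{(r,s)}$), and let $V_{K'}\subset\bR^d$ denote the subspace of trace-zero vectors whose entries are constant on each $K'$-block. For $\al\in K'$, the coordinates of $\log\pi_A(\psi(\al))$ depend only on the restriction to $K'$ of the corresponding embedding of $K$, and hence $\log\pi_A(H'_{x_\Lam})\subset V_{K'}$. Since $H'x_\Lam$ is compact (being closed inside the compact $T^{(r,s)}x_\Lam$ of Lemma~\ref{compact orbits}), the discrete subgroup $H'_{x_\Lam}$ is cocompact in $H'$; one then checks that $\pi_A(H'_{x_\Lam})$ is a lattice of full rank inside the vector space $\pi_A(H')\subset A_{r,s}$, so the $\bR$-span of $\pi_A(H'_{x_\Lam})$ equals $\pi_A(H')$, giving $\log\pi_A(H')\subset V_{K'}$. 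Since $\{a_k(t)\}\subset H'\cap A_{r,s}$, it follows that $\log a_k(1)\in V_{K'}$.

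Finally, $\log a_k(1)$ takes only two values: $d-k$ on positions $1,\dots,k$ and $-k$ on positions $k+1,\dots,d$. Being constant on each $K'$-block of size $d_1$ forces every block to lie entirely in $\{1,\dots,k\}$ or entirely in $\{k+1,\dots,d\}$; hence $d_1\mid k$ and $d_1\mid d-k$, so $d_1\mid\gcd(k,d)=1$, contradicting $d_1>1$. The step I expect to require the most care is the verification that $\pi_A(H'_{x_\Lam})$ is actually a lattice, rather than merely a dense subgroup, in $\pi_A(H')$; this uses both the finite generation of $H'_{x_\Lam}$ and the fact that $H'_{x_\Lam}$ times the maximal compact subgroup of $H'$ is closed in $H'$.
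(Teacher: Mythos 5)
Your proposal is correct and follows essentially the same route as the paper: reduce via Lemma~\ref{equiblock} to the equiblock structure of $H'_{x_\Lam}$, exploit cocompactness of $H'_{x_\Lam}$ in $H'$, and derive a contradiction with $\gcd(k,d)=1$ from the block structure. The paper's own proof is considerably terser (it states the compactness-forces-boundedness step and then simply asserts the contradiction with coprimality), whereas you fill in the implicit step cleanly via the projection $\pi_A$ to the split part, the containment $\log\pi_A(H')\subset V_{K'}$, and the explicit divisibility argument $d_1\mid k$, $d_1\mid d-k$, hence $d_1\mid\gcd(k,d)=1$. One small simplification worth noting: for your step~4 you do not actually need $\pi_A(H'_{x_\Lam})$ to be a lattice — since $H'/H'_{x_\Lam}$ is compact and $\pi_A$ is continuous, the quotient of $\pi_A(H')$ by the closed $\bR$-span of $\pi_A(H'_{x_\Lam})$ is a compact quotient of a vector group, hence trivial, which already gives $\log\pi_A(H')\subset V_{K'}$ without the lattice verification you flagged as delicate.
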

\begin{proof}
$C_G(H')$ strictly contains $T^{(r,s)}$ if and only if $\tilde{H'}<\on{Ker}(\chi_{ij})$ for some $1\le i\ne j\le d$. If this is the case, then Lemma~\ref{equiblock} implies that $\tilde{H'}_{x_\Lam}$ is an equiblock group. As $\tilde{H'}/\tilde{H'}_{x_\Lam}$ is compact, it follows that large elements of the one parameter group $\set{a_k(t)}_{t\in\bR}<\tilde{H'}$ can be brought to a compact set if multiplied by the appropriate elements of the equiblock diagonal group $\tilde{H'}_{x_\Lam}$. This contradicts the assumption that $k$ is co-prime to $d$.
\end{proof}
\section{Proof of Theorems~\ref{theorem 1}, \ref{theorem 4}, and~\ref{theorem 2}.}\label{main proofs}
This section is organized as follows. We present below a strategy of proving that a point $x\in X_d$ is $A$-regular of periodic type, which is
the subject of Theorems~\ref{theorem 1},~\ref{theorem 2}. The discussion culminates in Lemma~\ref{main lemma} below, and then the theorems are derived. At the end of the section we deduce Theorem~\ref{theorem 4} from the
results appearing in~\cite{Shap1}.

In \cite{LW}, Barak Weiss and the first named author proved the following theorem:
\begin{theorem}\label{LW}
Let $d\ge 3$ and $x\in X_d$ be such that $\overline{Ax}$ contains a compact $A$-orbit, then $x$ is $A$-regular of periodic type.
\end{theorem}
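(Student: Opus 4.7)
The plan is to construct a closed connected subgroup $L$ with $A\subset L\subset G$ such that $\overline{Ax}=Lx$ is $L$-periodic. Write $Y=\overline{Ax}$ and let $Ay\subset Y$ be the given compact $A$-orbit; if $Y=Ay$ then $x$ is itself $A$-periodic and we take $L=A$, so assume $Y\supsetneq Ay$. Near $Ay$ the space $X_d$ has a product structure $Ay\times B$, where $B$ is a ball in the transverse direction modelled on the root space decomposition $\gog=\goa\oplus\bigoplus_{\al\ne 0}\gog_\al$. Because $Ay$ is compact and $Y\supsetneq Ay$, we may choose a sequence $z_n\in Ax$ approaching $Ay$ transversally: $z_n=\exp(w_n)a_n y$ with $0\ne w_n\to 0$ in $\bigoplus_{\al\ne 0}\gog_\al$ and $a_ny\to y_\infty\in Ay$.

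The key step is to shear $z_n$ by a cleverly chosen $c_n\in A$ so as to isolate a single root direction in the limit. Decompose $w_n=\sum_\al w_n^\al$; the adjoint action of $c\in A$ preserves each $\gog_\al$ and scales it by the character value $\al(c)$. Using that $\on{rank}(A)=d-1\ge 2$, the character group of $A$ has dimension $\ge 2$, and one can choose a root $\al_0$ and elements $c_n\in A$ so that $\al_0(c_n)\|w_n^{\al_0}\|$ stays in a fixed compact interval bounded away from $0$, while $|\be(c_n)|\|w_n^\be\|\to 0$ for every root $\be$ not proportional to $\al_0$. Along a subsequence $c_n\cdot z_n\to\exp(w_\infty)y'\in Y$ with $0\ne w_\infty\in\gog_{\al_0}$. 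By varying the target scale of $\al_0(c_n)\|w_n^{\al_0}\|$ one obtains limits $\exp(tw_\infty)y'\in Y$ for a dense set of $t\in\bR$; taking closure yields a one-parameter unipotent subgroup $U\subset G$ with $Uy'\subset Y$.

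Once unipotent invariance is produced, Ratner's orbit closure theorem applied to $U$ shows that $\overline{Uy'}$ is a periodic orbit $My'$ for a closed connected $M\supset U$ generated by one-parameter unipotents. Because $A$ normalises each $\gog_\al$, it normalises $U$, hence it normalises the identity component of $M$; the group $L=A\cdot M$ is therefore a closed connected subgroup of $G$ containing $A$, and its orbit $Ly'$ is periodic and contained in $Y$. A density argument, using that $\overline{Ax}=Y$ together with the homogeneity and closedness of $Ly'$, forces $x\in Ly'$; hence $Lx=Ly'=Y$ is a periodic $L$-orbit through $x$, so $x$ is $A$-regular of periodic type.

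The main obstacle is the isolation of a single root direction in the shearing step. A generic transverse approach $w_n\to 0$ mixes several root spaces at once, and one needs the higher rank of $A$ to arrange that all but one of them get contracted to zero while the remaining one survives at a definite scale. This decoupling is precisely what requires the hypothesis $d\ge 3$, i.e.\ $\on{rank}(A)\ge 2$; in rank one the shearing argument collapses, consistent with the abundance of bounded irregular $A$-orbits in $X_2$ noted earlier in the paper.
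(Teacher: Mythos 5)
You should first be aware that the paper does not prove Theorem~\ref{LW} at all: it imports it verbatim from \cite{LW}, and the machinery of \S\ref{main proofs} (Lemma~\ref{main lemma} via Theorems~\ref{EKL} and~\ref{PR}) actually \emph{uses} Theorem~\ref{LW} as its final step rather than reproving it. Judged on its own merits, your proposal has a genuine gap precisely at the point where that machinery still has to invoke \cite{LW}: the last paragraph. From $Ly'\subset Y=\overline{Ax}$ with $Ly'$ closed you assert that ``a density argument \dots forces $x\in Ly'$.'' No such argument exists: a closed $A$-invariant set can contain a closed orbit of a strictly larger group without the original point lying on any such orbit. The paper's own Theorem~\ref{irregular points theorem} exhibits exactly this phenomenon --- reductive groups $H^{(i)}\supset A$ with closed orbits $H^{(i)}y_i\subset\overline{Ax_v}$, while $Ax_v$ is disjoint from them and $x_v$ is irregular. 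Your construction only yields $\overline{Ax}\supset Ly'$ for a point $y'$ on the compact orbit $Ay$; upgrading this to the statement that $\overline{Ax}$ \emph{equals} a periodic orbit through $x$ itself is the main content of \cite{LW} (it requires a maximality and isolation analysis of the set $\{g : gy'\in\overline{Ax}\}$), and it is entirely missing from your write-up.

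Several intermediate steps are also asserted rather than proved, and one of them is where the hypothesis $d\ge 3$ really enters --- not where you place it. (i) Isolating a single root space in the limit of $\on{Ad}_{c_n}(w_n)$ for a \emph{sequence} $w_n\to 0$ whose components decay at unrelated rates needs an argument (compare Lemma~\ref{first step}, which treats a fixed subalgebra, not a sequence); moreover it has nothing to do with higher rank: for $d=2$ there are only the two root spaces $\gog_{12},\gog_{21}$, and expanding one contracts the other, so the ``decoupling'' works there too, yet the theorem is false for $d=2$. (ii) Passing from the single element $\exp(w_\infty)y'\in Y$ to a full one-parameter group $Uy'\subset Y$ with the \emph{same} base point is the step that needs higher rank: one conjugates by the stabilizer $A_{y'}$, a rank-$(d-1)$ lattice in $A$, and needs the image $\lam_{i_0j_0}(\log A_{y'})$ to be non-discrete in $\bR$, which fails for $d=2$; ``varying the target scale'' changes the subsequence and hence changes $y'$. (iii) $A$ normalizes $U$ but does not obviously normalize the group $M$ with $\overline{Uy'}=My'$, since conjugating by $a\in A$ moves the base point to $ay'$; so neither the group property and closedness of $AM$ nor the periodicity of $AMy'$ is immediate. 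This last difficulty is exactly why the paper combines $A$-invariance and unipotent invariance at the level of measures (Theorem~\ref{EKL}) rather than orbit closures.
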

The following is a consequence  of Theorem 2.8 in~\cite{PR}.
\begin{theorem}\label{PR}
Let $L$ be a reductive subgroup of $G$ containing $A$, and let $Lx$ be a periodic orbit of $L$ in $X_d$. Then $Lx$ contains a compact $A$-orbit.
\end{theorem}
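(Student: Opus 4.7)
My plan is to invoke Theorem~2.8 of~\cite{PR} and transport the resulting $\bR$-split Cartan to $A$. The first structural observation to set down is that $A$ is itself a maximal $\bR$-split torus of $L$: since $A$ is the full diagonal torus of $G$ it is self-centralizing in $G$, and hence also in $L\supset A$, which makes $A$ a Cartan subgroup of $L$; being $\bR$-diagonalizable, $A$ is in fact a maximal $\bR$-split Cartan of $L$.

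Next, the hypothesis that $Lx$ is periodic means that $\Gamma:=\set{l\in L : lx=x}$ is a lattice in $L$. Applying Theorem~2.8 of~\cite{PR} (to the semisimple part of $L$, after a routine reduction discussed below) produces a maximal $\bR$-split torus $A'\subset L$ such that $A'\cap\Gamma$ is a uniform lattice in $A'$. Since any two maximal $\bR$-split tori of a connected reductive real Lie group are conjugate by an element of the group, I may choose $l_0\in L$ with $l_0 A' l_0^{-1}=A$. Setting $y:=l_0 x\in Lx$, one then has
$$\on{Stab}_A(y)\;=\;A\cap l_0\Gamma l_0^{-1}\;\supset\;l_0\pa{A'\cap\Gamma}l_0^{-1},$$
which is cocompact in $A$. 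Hence $Ay$ is a compact $A$-orbit contained in $Lx$, as required.

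The main obstacle I anticipate is passing from the semisimple hypothesis in~\cite[Thm.~2.8]{PR} to the reductive group $L$ in the statement. Writing $L^0=Z(L)^0\cdot L_s$ with $L_s=[L^0,L^0]$ semisimple, I would observe that $Z(L)^0\subset A$ (once again because $A$ is self-centralizing in $L$), that $A\cap L_s$ is a maximal $\bR$-split Cartan of $L_s$, and that $\Gamma$ descends, modulo the central factor already contained in $A$, to a lattice in $L_s$. The Prasad--Raghunathan torus produced in $L_s$ then extends to the desired $\bR$-split Cartan of $L$ by combining it with $Z(L)^0$, after which the conjugation argument above applies verbatim.
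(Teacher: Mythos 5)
Your argument is correct and follows essentially the same route as the paper, which simply derives the statement from Theorem 2.8 of \cite{PR} without recording any details. The steps you supply --- that $A$ is (the identity component of) a maximal $\bR$-split Cartan subgroup of $L$, the conjugation of the Prasad--Raghunathan torus back to $A$ inside $L$, and the reduction to the semisimple part via $Z(L)^0\subset A$ --- are exactly the standard ones left implicit in the paper, and your sketch of each is sound.
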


Ratner's Measure Classification Theorem \cite[Theorem 1]{R-annals} gives a classification of measures in $X_d$ that are invariant under a one parameter unipotent subgroup of $G$. We shall require the following variant, proved in the next section:

\begin{theorem}\label{EKL}
Let $\mu$ be an $A$-invariant and ergodic probability measure on $X_d$ which is invariant under a one parameter unipotent subgroup of $G$. Then,
it is in fact an $L$-invariant probability measure supported on a single $L$-orbit
in $X_d$, for some reductive group $L$ containing $A$.
\end{theorem}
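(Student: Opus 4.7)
The plan is to apply Ratner's Measure Classification Theorem to the $U$-action and then leverage the $A$-invariance and $A$-ergodicity of $\mu$ to single out a single orbit of a reductive group $L$ containing $A$.

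As a preliminary reduction, $A$-invariance of $\mu$ implies that $\mu$ is also invariant under every conjugate $aUa^{-1}$, hence under the closed subgroup $\tilde U$ of $G$ generated by $\set{aUa^{-1}:a\in A}$. This $\tilde U$ is connected, unipotent, and normalized by $A$, so after replacing $U$ by $\tilde U$ one may assume that $A$ normalizes $U$ and that $AU$ is a closed Lie subgroup of $G$ preserving $\mu$.

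Next I would apply Ratner's theorem to the $U$-action and decompose $\mu=\int_Y \mu_y\, d\nu(y)$ into $U$-ergodic components. Each $\mu_y$ is the Haar probability measure on a closed finite-volume orbit $H_y\cdot y$ for some closed subgroup $H_y\supset U$ of $G$. Because $A$ normalizes $U$, the push-forward $a_*\mu_y$ is again a $U$-ergodic component with parameters $(aH_ya^{-1},ay)$; consequently $\nu$ is $A$-invariant on $Y$ and inherits $A$-ergodicity from $\mu$. The $A$-equivariance $H_{ay}=aH_ya^{-1}$ together with $A$-ergodicity forces the groups $H_y$ to lie in a single $A$-conjugacy class for $\nu$-a.e.\ $y$. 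A standard application of the Ratner--Mozes--Shah machinery then shows that $\mu$ is concentrated on a single closed orbit $Lx_0$ of finite $L$-volume, where $L$ is the closed subgroup of $G$ generated by $A$ and $H_{y_0}$ for a typical $y_0$; by uniqueness of invariant probability measures on finite-volume orbits, $\mu$ coincides with the $L$-invariant Haar measure on $Lx_0$.

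The last and most delicate step, which is the main obstacle, is to verify that $L$ is reductive. Writing the Levi decomposition $L^0=M\ltimes R_u(L^0)$ of the connected component with $A\subset M$ (possible since $A$ is a maximal $\bR$-split torus of $G=\SL_d(\bR)$), a nontrivial $R_u(L^0)$ would contain a one-parameter $A$-weight subgroup of nontrivial weight. The lattice $L\cap\on{Stab}_G(x_0)$ in $L$ forced by finiteness of the volume of $Lx_0$ is incompatible with the resulting $A$-contraction and expansion on this unipotent direction, so $R_u(L^0)=\set{e}$ and $L$ is reductive. While the measure-theoretic pipeline above is a fairly routine combination of Ratner's theorem with the ergodicity hypothesis, cleanly carrying out this structural argument that rules out a non-trivial unipotent radical is the delicate point of the proof.
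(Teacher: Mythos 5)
Your proposal follows the route of the paper's second (``alternative'') proof: the paper's primary argument is entirely different, showing $\on{h}_\mu(a)>0$ for a suitable $a\in A$ via entropy-contribution formulas and then invoking \cite[Theorem 1.3]{EKL}. Your opening reduction (replacing $U$ by the closure of the group generated by its $A$-conjugates so that $A$ normalizes $U$) and the use of the $U$-ergodic decomposition together with Ratner's Measure Classification Theorem match the paper's alternative argument. There are, however, two genuine gaps in what follows.

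First, the passage from ``the groups $H_y$ lie in a single $A$-conjugacy class'' to ``$\mu$ is concentrated on a single closed finite-volume orbit of $L=\langle A,H_{y_0}\rangle$'' is the crux, and it is not a ``standard application of Ratner--Mozes--Shah machinery'' (Mozes--Shah concerns weak-$*$ limits of sequences of unipotent-invariant ergodic measures, not ergodic decompositions). A single conjugacy class is strictly weaker than what you need: the conjugating elements may be unbounded, and the group generated by $A$ and a genuinely varying family $a_yHa_y^{-1}$ need not preserve $\mu$, let alone act with a closed orbit. The paper closes this gap in several explicit steps: Poincar\'e recurrence for the $A$-action on the space of ergodic components shows that $H_y$ is a.e.\ \emph{normalized} by $A$, hence constant $=H$ by ergodicity; $H$ is shown to be semisimple with $u_t\in[H,H]$; the orbit of $N_G(H)^0$ through a typical point is closed; and $N_G(H)^0=AH$ unless $H$ fixes a vector in $\bR^d$, a case excluded by a separate argument producing $a\in A$ with $a^nx\to\infty$ for a.e.\ $x$. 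Only then does $A$-ergodicity place $\mu$ on a single closed $AH$-orbit. Second, your reductivity argument has the right idea but ``incompatible with the lattice'' is not a proof; the concrete statement is that a nontrivial unipotent radical of an $A$-normalized $L$ yields $a\in A$ with $\av{\det \on{Ad}(a)|_{\lie L}}<1$, so the covolume of the stabilizer lattice in $L$ decays under conjugation by $a^n$ and $a^nx\to\infty$ for $\mu$-a.e.\ $x$, contradicting Poincar\'e recurrence. Note also that in the paper's scheme reductivity (indeed semisimplicity of $H$) is established \emph{before} assembling the single orbit, because it is needed to identify $N_G(H)^0$ with $AH$.
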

The above three theorems suggest a scheme of proving that a point $x\in X_d$ is $A$-regular of periodic type. Namely, one should prove
that $\overline{Ax}$ contains the support of an $A$-invariant probability measure which is invariant under a one parameter unipotent subgroup of $G$. The proofs of Theorems \ref{theorem 1} and \ref{theorem 2} follow this scheme.
To obtain an $A$-invariant measure in our arguments, we start with an initial probability measure $\nu$,
which is not $A$-invariant but is supported inside the orbit closure $\overline{Ax}$, we choose
a F\o lner sequence $F_n\subset A$, and define the averages
\begin{equation}\label{average 1}
\mu_n=\frac{1}{\av{F_n}}\int_{F_n}a_*\nu da,
\end{equation}
where $\av{F_n}$ denotes the Haar measure of $F_n$ in $A$.
Any weak$^*$ limit $\mu$ of the sequence $\mu_n$ will be an $A$-invariant measure on $X_d$. We face two problems
\begin{enumerate}
	\item One needs to prove that $\mu$ is a probability measure (i.e.\  there is no escape of mass).
	\item One needs to prove that $\mu$ is invariant under a one parameter unipotent subgroup of $G$.
\end{enumerate}
The fact which enables us to overcome the above problems is the nature of the initial probability measure $\nu$. We shall see in the course of
the arguments that $\nu$ is chosen to be an $H$-invariant probability measure supported on an orbit $Hy\subset \overline{Ax}$, for some
suitable choice of a point $y\in\overline{Ax}$ and a subgroup $H<G$ (having some additional properties). The tool which enables us to resolve problem (1) is the following theorem of Eskin, Mozes, and Shah (see \cite{EMS-Gafa}):
\begin{theorem}\label{EMS Gafa}
Let $H$ be a reductive subgroup of $G$ and let $\nu$ be an $H$-invariant measure supported on an orbit $Hy\subset X_d$. If the orbit $C_G(H)y$ of the centralizer of $H$ in $G$ is compact, then for any sequence $g_n\in G$, any weak$^*$ limit of $(g_n)_*\nu$ is a probability measure.
\end{theorem}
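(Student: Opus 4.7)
The plan is to establish non-divergence of the family $\{(g_n)_*\nu\}$ by combining Mahler's compactness criterion with the Dani--Margulis quantitative non-divergence for unipotent-like polynomial maps, exploiting crucially the reductive structure of $H$. By Mahler's criterion, escape of mass in $X_d$ corresponds to the appearance of arbitrarily short vectors in the corresponding lattices. Hence it suffices to show that for every $\veps_0>0$ there exists $\veps>0$ with
$$\nu\set{h\in Hy : g_n h \text{ has a nonzero vector of norm }<\veps}<\veps_0$$
uniformly in $n$.

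Next I would exploit the reductive structure of $H$: up to a finite cover, decompose $H=Z^\circ(H)\cdot H_s\cdot H_c$, where $Z^\circ(H)$ is the connected center, $H_s$ is the product of the non-compact simple factors, and $H_c$ is compact. Since $Z^\circ(H)\subset C_G(H)$, the hypothesis that $C_G(H)y$ is compact forces the $Z^\circ(H)$-orbit of $y$ to lie in a fixed compact set; the factor $H_c$ contributes bounded orbits automatically. Consequently the analysis reduces to controlling the $H_s$-component, and $H_s$ has the decisive feature of being generated by its one-parameter unipotent subgroups.

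For the $H_s$-part I would invoke the Dani--Margulis quantitative non-divergence theorem in the sharp polynomial form due to Kleinbock--Margulis, applied not to individual unipotent trajectories but to whole orbit pieces. The key point is that for any vector $v\in\bigwedge^i\bZ^d$, the function $h\mapsto \norm{g_n h\cdot v}$, restricted to a compact piece of $H_s$ parametrized by matrix entries, is polynomial of bounded degree, hence $(C,\al)$-good with constants depending only on $d$. This converts a pointwise shortness condition into a measure estimate on the $H$-orbit, uniformly in $g_n$. Summing (or more precisely, taking a careful union bound) over the discrete collection of primitive integer vectors that can become short then yields the desired tightness.

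The main obstacle is precisely this uniformity in $g_n$ and in the choice of ``dangerous'' vectors. One must rule out that $\nu$ concentrates on the preimage of some proper $g_n$-translated subspace for an infinite sequence of $n$'s. This is handled by the linearization technique of Dani--Margulis: if concentration occurred, then $Hy$ would have to lie inside a proper algebraic subvariety coming from the stabilizer of a nonzero vector in some $\bigwedge^i\bR^d$, which would force $y$ to be fixed by a nontrivial algebraic subgroup commuting with $H$, contradicting or being absorbed into the compactness assumption on $C_G(H)y$. Once this linearization step is in place, the non-divergence estimate goes through and every weak$^*$ limit of $(g_n)_*\nu$ is seen to retain the total mass of $\nu$.
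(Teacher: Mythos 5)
A preliminary remark: the paper does not prove this statement at all --- it is quoted as a theorem of Eskin, Mozes and Shah \cite{EMS-Gafa} and used as a black box. So your attempt has to be measured against the original EMS argument, and your skeleton (Mahler's criterion, quantitative non-divergence for $(C,\alpha)$-good families, linearization over primitive vectors of $\bigwedge^i\bZ^d$) is indeed the right one.

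However, the one step that carries the whole proof is exactly the one you leave vague, and the mechanism you propose for it is wrong. You claim that concentration on a dangerous subvariety ``would force $y$ to be fixed by a nontrivial algebraic subgroup commuting with $H$, contradicting or being absorbed into the compactness assumption on $C_G(H)y$.'' Being stabilized by a large subgroup of $C_G(H)$ contradicts nothing: in every application in this paper (e.g.\ $H<T^{(r,s)}$, $y=x_\Lambda$) the stabilizer of $y$ in $C_G(H)=T^{(r,s)}$ is a cocompact lattice in it --- that is precisely \emph{why} $C_G(H)y$ is compact. The hypothesis actually enters as follows. The obstruction to a uniform lower bound $\sup_{h\in\Omega}\norm{g_nhv}\ge\rho$ (which the Kleinbock--Margulis/Dani--Margulis machinery requires for every primitive $v\in\bigwedge^i\bZ^d$ corresponding to a sublattice of $y$) is a proper nonzero $H$-invariant subspace $W\subset\bR^d$ such that $y\cap W$ is a lattice in $W$; for such a $W$ a suitable $g_n$ contracts $g_nh(y\cap W)$ for \emph{all} $h$ simultaneously and the entire mass escapes. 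One excludes this by noting that, $H$ being reductive, $W$ has an $H$-invariant complement $W'$, and the one-parameter group acting by scalars $e^{at}$ on $W$ and $e^{bt}$ on $W'$ (with $a\dim W+b\dim W'=0$, $a<0$) lies in $C_G(H)$ and drives $y$ to infinity, contradicting compactness of $C_G(H)y$. Even granting this, passing from ``no single invariant $W$'' to the \emph{uniform} $\rho$ over all $n$ and all primitive $v$ requires a limiting argument producing an $H$-invariant subspace from a sequence of almost-invariant ones; you skip this entirely. A secondary gap: parametrizing a compact piece of $H_s$ ``by matrix entries'' does not make $h\mapsto g_nhv$ polynomial in a Euclidean parameter; one must parametrize by products of one-parameter unipotent subgroups with uniformly bounded degrees, which is where the fact that $H_s$ is generated by unipotents is actually used.
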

The following lemma is needed for the resolution of problem (2). It shows us how to choose the F\o lner sets in~\eqref{average 1} in order that $\mu$ will indeed be invariant under a unipotent one parameter subgroup of $G$. The proof is postponed to the next section.
\begin{lemma}\label{gaining unipotents}
Let $H<G$ be a closed connected subgroup
not contained in $A$ and let $\nu$ be an $H$-invariant probability measure. There exists an open cone $C$ in $A$ and a unipotent one parameter subgroup $u(t)$ in $G$, such that if the F\o lner sets $F_n$ are contained in $C$, then any weak$^*$ limit of $\mu_n$ from~\eqref{average 1} is $u(t)$-invariant.
\end{lemma}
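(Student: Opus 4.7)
The plan is to exploit the polynomial divergence of the $\on{Ad}(A)$-action on an element of $\goh:=\lie(H)$ lying outside $\goa:=\lie(A)$. Since $H$ is connected and $H\not\subset A$, pick $X\in\goh\setminus\goa$ and decompose it along the root-space decomposition $\lie(G)=\goa\oplus\bigoplus_\alpha \gog_\alpha$:
$$X=X_0+\sum_{\alpha\in\Phi_X} X_\alpha,\qquad X_0\in\goa,\;\; X_\alpha\in\gog_\alpha\setminus\set{0},\;\; \Phi_X\ne\emptyset.$$
View $\Phi_X\cup\set{0}$ as a finite subset of $\goa^*$. Since it is not equal to $\set{0}$, its convex hull has a non-zero vertex $\alpha_0\in\Phi_X$; separating $\alpha_0$ from the remaining points by a linear functional and inserting a bit of slack yields $\eta>0$ and a non-empty open cone
$$C=\set{v\in\goa\,:\,\alpha_0(v)-\beta(v)\ge \eta\norm{v}\;\text{for all }\beta\in(\Phi_X\cup\set{0})\setminus\set{\alpha_0}}.$$
Since $\alpha_0\ne 0$, the root vector $X_{\alpha_0}$ is nilpotent and $u(t):=\exp(tX_{\alpha_0})$ is the candidate unipotent one-parameter subgroup.

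The key computation is the standard polynomial-divergence trick. For $a_v=\exp(v)\in A$,
$$\on{Ad}(a_v)X=X_0+\sum_{\alpha\in\Phi_X}e^{\alpha(v)}X_\alpha,$$
so $Y_v:=e^{-\alpha_0(v)}\on{Ad}(a_v)X\to X_{\alpha_0}$ as $\norm{v}\to\infty$ in $C$, with error of order $e^{-\eta\norm{v}}$. The $\exp(sX)$-invariance of $\nu$ for every $s\in\bR$, conjugated by $a_v$, becomes invariance of $(a_v)_*\nu$ under $\exp(s\,\on{Ad}(a_v)X)$; specializing $s=t\,e^{-\alpha_0(v)}$ yields $\exp(tY_v)$-invariance of $(a_v)_*\nu$. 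Since $\exp(tY_v)\to u(t)$ uniformly for $t$ in any compact set, for every $\phi\in C_c(X_d)$ the defect
$$\av{\int(\phi\circ u(t)-\phi)\,d(a_v)_*\nu}\to 0\quad\text{as }v\to\infty\text{ in }C,$$
uniformly in $t$ on compacta.

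To conclude, I would integrate this inequality against the normalized Haar measure on $F_n$. Since $F_n\subset C$ and $(F_n)$ is F\o lner in $A\cong\bR^{d-1}$, necessarily $|F_n|\to\infty$, which, together with $F_n\subset C$, forces $|F_n\cap\set{\norm{v}\le R}|/|F_n|\to 0$ for every $R>0$; the bulk of $F_n$ therefore lies arbitrarily deep in $C$. Averaging the above estimate and passing to a weak$^*$ limit $\mu$ of $\mu_n$ gives $\int(\phi\circ u(t)-\phi)\,d\mu=0$ for every $\phi\in C_c(X_d)$, i.e.\ $u(t)_*\mu=\mu$. The main point requiring care is uniformity: the approximation $\exp(tY_v)\to u(t)$ lives at the level of group elements, independent of $\nu$, but one must turn it into a uniform estimate against $(a_v)_*\nu$ for all $v$ sufficiently deep in $C$; this is handled by uniform continuity of $\phi$ together with the fact that $\nu$ is a probability measure.
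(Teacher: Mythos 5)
Your proof is correct and follows essentially the same route as the paper: renormalize $\on{Ad}(\exp v)X$ by the dominant root weight so that it converges to a nonzero nilpotent root vector as $v\to\infty$ in a suitable cone, transfer the resulting approximate $\exp(tY_v)$-invariance of $(a_v)_*\nu$ to $u(t)$ via uniform continuity of test functions, and kill the bounded part of $F_n$ using $\av{F_n}\to\infty$. The only (harmless) difference is cosmetic: you select the dominant root $\alpha_0$ as a nonzero vertex of the convex hull of $\Phi_X\cup\set{0}$ and separate it by a hyperplane, whereas the paper fixes an explicit linear ordering of the roots and exhibits an explicit $v_0\in\goa$ achieving the same strict domination.
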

We summarize the above discussion in the form of the following lemma:
\begin{lemma}\label{main lemma}
Let $d\ge 3$ and $x\in X_d$ be given. The following  implies that $x$ is $A$-regular of periodic type: There exists a closed connected reductive subgroup $H$ of $G$ and a point $y\in\overline{Ax}$ with the following properties:
\begin{enumerate}
	\item $H$ is not contained in $A$.
	\item $Hy\subset \overline{Ax}$ and $Hy$ supports an $H$-invariant probability measure.
	\item The orbit $C_G(H)y$ is compact.
\end{enumerate}
\end{lemma}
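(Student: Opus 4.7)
\medskip
\noindent\emph{Plan.} I follow the blueprint spelled out in the paragraphs preceding the lemma. Let $\nu$ denote the $H$-invariant probability measure on $Hy$ provided by hypothesis~(2). Since $Hy\subset\overline{Ax}$ and $\overline{Ax}$ is $A$-invariant, every translate $a_*\nu$ (for $a\in A$) is supported in $\overline{Ax}$. Using hypothesis~(1) to ensure $H\not\subset A$, I invoke Lemma~\ref{gaining unipotents} to produce an open cone $C\subset A$ and a one-parameter unipotent subgroup $u(t)<G$ with the averaging property stated there. Fixing a F\o lner sequence $F_n\subset C$ for $A$, set
$$\mu_n=\frac{1}{\av{F_n}}\int_{F_n}a_*\nu\,da,$$
and let $\mu$ be any weak-$*$ subsequential limit of $(\mu_n)$.

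\medskip
\noindent The next step is to verify three properties of $\mu$. First, $A$-invariance of $\mu$ follows from the F\o lner property of $(F_n)$. Second, $u(t)$-invariance is exactly the conclusion of Lemma~\ref{gaining unipotents} for this particular choice of $C$ and $F_n$. Third, and most crucially, $\mu$ is a probability measure: this is where hypothesis~(3) enters. Since $Hy$ supports the $H$-invariant $\nu$ and $C_G(H)y$ is compact, Theorem~\ref{EMS Gafa} applied to $\nu$ shows that for every sequence $g_n\in G$ the push-forwards $(g_n)_*\nu$ suffer no escape of mass in weak-$*$ limits; standard extraction arguments upgrade this to uniform tightness of $\set{a_*\nu:a\in A}$, and this tightness transfers to $(\mu_n)$ and hence to $\mu$. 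Since each $\mu_n$ is supported in the closed $A$-invariant set $\overline{Ax}$, so is $\mu$.

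\medskip
\noindent Having produced such a $\mu$, I would extract from it an $A$-ergodic component that is still $u(t)$-invariant and invoke Theorem~\ref{EKL} on this component; this yields a reductive subgroup $L$ containing $A$ together with a periodic $L$-orbit $Ly'\subset \mathrm{supp}(\mu)\subset\overline{Ax}$. Theorem~\ref{PR} then provides a compact $A$-orbit inside $Ly'$, hence inside $\overline{Ax}$, and Theorem~\ref{LW} (applicable because $d\ge 3$) finally delivers the conclusion that $x$ is $A$-regular of periodic type.

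\medskip
\noindent The delicate point — and what I expect to be the main obstacle — is the extraction of a joint $A$-ergodic, $u(t)$-invariant component: the $A$-ergodic decomposition of $\mu$ does not a priori preserve $u(t)$-invariance of individual components, because $u(t)$ need not commute with $A$. The plan is to exploit the structure built into Lemma~\ref{gaining unipotents}: in its construction $A$ normalizes the one-parameter subgroup $u(\cdot)$ (the cone $C$ is precisely the region on which the relevant weight character contracts $u$), so $\langle A,u(t)\rangle$ is solvable and hence amenable. Decomposing $\mu$ ergodically with respect to this larger group produces components invariant under both $A$ and $u(t)$; a standard argument using the solvable structure — or, alternatively, decomposing $\mu$ into its $u(t)$-ergodic components, applying Ratner's measure classification to each, and using $A$-invariance of $\mu$ to control the parameter space of components — supplies a measure to which Theorem~\ref{EKL} can be applied, completing the chain.
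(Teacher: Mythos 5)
Your argument is correct and is essentially the paper's own proof: Lemma~\ref{main lemma} is explicitly stated as a summary of exactly this scheme (average the $H$-invariant measure $\nu$ over F\o lner sets inside the cone from Lemma~\ref{gaining unipotents}, use Theorem~\ref{EMS Gafa} together with hypothesis~(3) to rule out escape of mass, and then chain Theorems~\ref{EKL}, \ref{PR} and \ref{LW} to find a compact $A$-orbit in $\overline{Ax}$). The ergodicity subtlety you flag before applying Theorem~\ref{EKL} is genuine but left implicit in the paper; your proposed repairs work, and perhaps the cleanest is to observe that the entropy computation in the paper's proof of Theorem~\ref{EKL} only needs $h_\mu(a)>0$, which, by affinity of entropy over the $A$-ergodic decomposition, passes to a positive-measure set of $A$-ergodic components to which \cite[Theorem 1.3]{EKL} applies directly without any unipotent invariance of the components themselves.
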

\begin{proof}[Concluding the proof of Theorem~\ref{theorem 1}]
If $s=0$, then $Ax_\Lam$ is compact by Lemma~\ref{compact orbits} and hence $x_\Lam$ is $A$-regular of periodic type. Assume that $s>0$. We wish to use Lemma~\ref{main lemma} with the following choices of $y$ and $H$. Let $y=x_\Lam$ and let $H<T^{(r,s)}$ be the closed subgroup defined by the equation $\overline{A_{r,s}x_\Lam}=Hx_\Lam$. As we assume that the number field is not CM, it follows from Lemma~\ref{centralizer 1} that $H$ is not contained in $A$. It is clear that $H$ is connected, reductive, and that the $H$-orbit $Hx_\Lam\subset\overline{Ax_\Lam}$ supports an $H$-invariant probability measure. Lemma~\ref{centralizer 1} implies that $C_G(H)=T^{(r,s)}$ and Lemma~\ref{compact orbits} implies that $C_G(H)x_\Lam$ is compact.
We see that the conditions of Lemma~\ref{main lemma} are satisfied and the theorem follows.
\end{proof}
\begin{proof}[Concluding the proof of Theorem \ref{theorem 2}]
Let $F=\overline{Apx_\Lam}$. Observe that $F$ contains $$a_k(t)pa_k(-t)a_k(t)x_\Lam.$$ Assume for example that $p\in P_k^-$. Note that as $t\to \infty$, the conjugation
$a_k(t)pa_k(-t)$ approaches a limit $p'\in G$, while $a_k(t)x_\Lam$ has as limit points, any point in $H'x_\Lam$, where $H'<T^{(r,s)}$ is defined as in Lemma~\ref{centralizer 2}. Denote $y=p'x_\Lam$ and $H=p'H'p'^{-1}$. We see that $H$ is connected, reductive, and $F$ contains the orbit $Hy$ which supports an $H$-invariant probability measure. Moreover, Lemmas~\ref{centralizer 2},~\ref{compact orbits} imply that $y$ has a compact orbit under the action of the centralizer
\begin{equation}\label{eq:centralizer}
C_G(H)=p'C_G(H')p'^{-1}=p'T^{(r,s)}p'^{-1}.
\end{equation}
The argument now splits into two possibilities. Assume $H$ is not contained in $A$. Then Lemma~\ref{main lemma} applies and the theorem is proved. Assume on the other hand that $H<A$. It follows from~\eqref{eq:centralizer} that $A=p'T^{(r,s)}p'^{-1}$, hence $s=0$, $A=T^{(r,s)}$, and $p'\in N_G(A)$.
We conclude that $F$ contains the compact orbit $Ay=p'Ax_\Lam$ (by Lemma~\ref{compact orbits}). Theorem~\ref{LW} applies and the theorem follows.
\end{proof}
Note that Theorems~\ref{LW},~\ref{PR},~\ref{EKL} imply together the following characterization of $A$-regular points of periodic type in $X_d$ ($d\ge 3$).
\begin{theorem}\label{theorem 6}
Let $d\ge 3$ and $x\in X_d$. Then $x$ is $A$-regular of periodic type if and only if $\overline{Ax}$ contains a compact $A$-orbit.
\end{theorem}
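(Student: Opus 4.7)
The reverse direction $(\Leftarrow)$ is a direct restatement of Theorem~\ref{LW}. For $(\Rightarrow)$, suppose $x$ is $A$-regular of periodic type, so that $\overline{Ax}=Ly$ for some closed subgroup $A\le L\le G$ and some $y\in X_d$, with $Ly$ carrying an $L$-invariant probability measure $\mu$. The plan is to produce an $A$-invariant, $A$-ergodic probability measure supported on $\overline{Ax}$ that is also invariant under a one-parameter unipotent subgroup of $G$; once this is achieved, Theorem~\ref{EKL} identifies that measure as the uniform measure on a periodic $L'$-orbit for some reductive $L'\supseteq A$, and Theorem~\ref{PR} extracts a compact $A$-orbit from that $L'$-orbit, which by construction lies inside $\overline{Ax}$.

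I would split according to the identity component $L^0$ of $L$. If $L^0=A$, then $A$ is normal in $L$, so every element of $L$ normalizes $A$, and $Ly$ is a disjoint union of $A$-orbits. The measure $\mu$ must give positive mass to at least one such orbit, and its restriction to that orbit is a nonzero finite $A$-invariant measure, forcing the orbit to be compact. If on the other hand $L^0\supsetneq A$, then $\gol=\lie(L^0)$ is $A$-stable and strictly contains $\goa$, hence contains a root space $\gog_\al$ for some nontrivial root $\al$. The exponential of any nonzero $X\in\gog_\al$ defines a one-parameter unipotent subgroup $u(\bR)\subseteq L^0$, and $\mu$ is $u$-invariant in addition to being $A$-invariant.

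The crux is to find an $A$-ergodic component of $\mu$ that is still $u$-invariant, and this is the step I expect to require the most care. I would work inside the connected solvable subgroup $B=A\ltimes u(\bR)$, which is well defined because $A$ normalizes $u(\bR)$ through the root $\al$, and take a $B$-ergodic decomposition $\mu=\int\mu_\om\,dm(\om)$. Each $\mu_\om$ is $B$-invariant, and the Mautner phenomenon then upgrades $B$-ergodicity to $A$-ergodicity: choosing $a\in A$ with $\al(a)>0$ one computes $a^{-n}u(t)a^n=u(te^{-n\al(a)})\to e$, so in the unitary action on $L^2(\mu_\om)$ every $a$-invariant vector is $u(t)$-invariant for all $t$, hence $B$-invariant, and hence constant by $B$-ergodicity. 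Each $\mu_\om$ therefore meets the hypotheses of Theorem~\ref{EKL}, and the proof concludes as outlined in the first paragraph; everything outside the Mautner upgrade is essentially bookkeeping on top of the three cited theorems.
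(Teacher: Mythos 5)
Your argument is correct and follows exactly the route the paper intends: the paper merely remarks that Theorems~\ref{LW}, \ref{PR} and \ref{EKL} together yield the characterization, and your write-up supplies the omitted details (the case split on $L^0$, and the Mautner upgrade producing an $A$-ergodic, unipotent-invariant component to which Theorem~\ref{EKL} applies). I see no gaps.
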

\begin{corollary}[Inheritance]\label{generalization of LW}
Let $d\ge 3$ and $x\in X_d$ be such that $\overline{Ax}$ contains a point $y$ which is $A$-regular of periodic type. Then, $x$ is $A$-regular of periodic type too.
\end{corollary}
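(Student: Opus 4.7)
The corollary is essentially a direct consequence of the characterization just established in Theorem~\ref{theorem 6}, so the plan is to reduce the statement to a transitivity property of orbit closures combined with that characterization.

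First, I would invoke Theorem~\ref{theorem 6} on the point $y$: since $y$ is $A$-regular of periodic type, the closure $\overline{Ay}$ contains a compact $A$-orbit, say $Az_0$. (One can also see this directly from the definitions: $\overline{Ay} = Ly$ is a periodic $L$-orbit for some reductive $A < L < G$, and by Theorem~\ref{PR} any such periodic $L$-orbit contains a compact $A$-orbit.)

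Next, I would use the elementary fact that orbit closures are transitive under containment: since $y \in \overline{Ax}$ and $\overline{Ax}$ is $A$-invariant and closed, we have $Ay \subset \overline{Ax}$, and therefore
\[
\overline{Ay} \subset \overline{Ax}.
\]
In particular, the compact $A$-orbit $Az_0 \subset \overline{Ay}$ is also contained in $\overline{Ax}$.

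Finally, I would apply the other direction of Theorem~\ref{theorem 6}, this time to $x$: because $\overline{Ax}$ contains a compact $A$-orbit (namely $Az_0$), the point $x$ itself is $A$-regular of periodic type. This completes the argument. There is no real obstacle here—the substantive work has already been carried out in establishing Theorem~\ref{theorem 6} (which packages Theorems~\ref{LW},~\ref{PR} and~\ref{EKL}); the corollary is simply the observation that the property "the orbit closure contains a compact $A$-orbit" is manifestly inherited by any point whose orbit closure contains $y$.
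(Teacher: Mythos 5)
Your proof is correct and is exactly the argument the paper intends: the corollary is stated as an immediate consequence of Theorem~\ref{theorem 6}, obtained by applying the ``only if'' direction to $y$, noting $\overline{Ay}\subset\overline{Ax}$, and then applying the ``if'' direction to $x$. No issues.
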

We end this section by deducing Theorem~\ref{theorem 4} from the results in~\cite{Shap1}.
\begin{proof}[Proof of Theorem~\ref{theorem 4}]
In \cite[Theorem 4.5]{Shap1} it is stated that if $d\ge 3$ and $x\in X_d$ is such that $\overline{Ax}$ contains a compact $A$-orbit, then either $Ax$ is compact, or $x$ is GDP. It now follows from Theorem~\ref{theorem 6}, that our assumption that $x$ is $A$-regular of periodic type, implies that $\overline{Ax}$ contains a compact $A$-orbit, and the theorem follows.
\end{proof}
\section{Proofs of Lemma~\ref{gaining unipotents} and Theorem~\ref{EKL}}
\subsection{Preliminaries} In order to present the proofs of Lemma~\ref{gaining unipotents} and Theorem~\ref{EKL},
we need to introduce some terminology. We denote the Lie algebras of $G$ and $A$
by $\gog,\goa$ respectively.
$\goa$ consists of traceless diagonal matrices.
We have the \textit{root space decomposition}
\begin{align}\label{deco}
\gog=\goa\oplus\bigoplus_{i\ne j}\gog_{ij},
\end{align}
where the $\gog_{ij}$'s are the one dimensional common eigenspaces of $Ad_a$, $a\in A$. $\gog_{ij}$ is referred to as a \textit{root space}. Given a vector $X\in\gog$ we let $X=X_\goa+\sum_{i\ne j}X_{ij}$ denote its decomposition with respect to~\eqref{deco}. We denote by $\log$ the inverse
of the exponential map $\exp:\goa\to A$. Given a vector $v\in \goa$, the operator $\on{Ad}_{\exp(v)}$ has  $\gog_{ij}$ as a one dimensional eigenspace and it acts on it by multiplication by $e^{\lam_{ij}(v)}$, where $\lam_{ij}:\goa\to \bR$ is a linear functional called a \textit{root}. Hence, we have the following identity for $v\in\goa$ and $X\in \gog$
\begin{align}\label{formula}
\on{Ad}_{\exp(v)}(X)=X_\goa+\sum_{i\ne j}e^{\lam_{ij}(v)}X_{ij}.
\end{align}
The reader will easily convince himself that if $L<G$ is a closed connected subgroup with Lie algebra $\gol$, then $L$ is normalized by $A$ if and only if
\begin{align}\label{lie alg}
\gol=\pa{\goa\cap\gol}\oplus\bigoplus_{(i,j)\in I}\gog_{ij},
\end{align}
for some suitable choice of subset $I\subset\set{(i,j):1\le i\ne j\le d}$.

We order the roots in the following way: We say that $\lam_{ij}>\lam_{k\ell}$
if $j-i>k-\ell$ or if $j-i=k-\ell$ and $i<k$. In this way the ordering is linear. We say that a root $\lam_{ij}$ is \textit{positive}, if $i<j$. We fix some norm $\norm{\cdot}$ on $\gog$ and some metric $\on{d}(\cdot,\cdot)$ on $G$ (inducing the usual topology). In a metric space $Y$, we let  $B^Y_y(\rho)$ denote  the ball of radius $\rho$ around $y$ in $Y$. If $E,F\subset Y$, we let $\on{d}(E,F)$ denote the distance between the sets $E,F$.
Finally, for any diagonal matrix $a$ (not necessarily traceless), we denote by $p_\goa(a)$, its projection
to $\goa$, i.e.\
\begin{align}\label{projection on a}
p_\goa(a)=a-\diag{\frac{\on{Tr}(a)}{d},\dots,\frac{\on{Tr}(a)}{d}}.
\end{align}
\subsection{Proofs}
\begin{lemma}\label{first step}
Let $H<G$ be a closed connected subgroup not contained in $A$. Let $\goh$ denote its Lie algebra. Then, there exist an open cone
$\hat{C}<\goa$ and a nilpotent matrix $\textbf{n}\in\gog$, such that for any $\del>0$, there is a radius
$R>0$, such that if $v\in\hat{C}$ has norm $>R$, then $\on{d}\pa{\textbf{n},\on{Ad}_{\exp(v)}(\goh)}<\del$.
\end{lemma}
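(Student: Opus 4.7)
The plan is to exploit formula~\eqref{formula} and the root space decomposition~\eqref{deco}: by conjugating a suitable $X\in\goh$ with $\on{Ad}_{\exp(v)}$ for $v$ in a carefully chosen cone and then rescaling by a negative exponential, I will arrange for a single root component of $X$ to survive in the limit. Since that component lies in a root space $\gog_{ij}$, it will automatically be nilpotent.

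Since $H\not\subset A$, pick any $X\in\goh\setminus\goa$, and decompose $X=X_\goa+\sum_{(i,j)\in R(X)}X_{ij}$ where $R(X):=\{(i,j):X_{ij}\ne 0\}$ is nonempty. The key step is to select a distinguished root $\lam_{i_0j_0}$ with $(i_0,j_0)\in R(X)$, together with an open cone $\hat C_0\subset\goa$, on which
\begin{equation*}
\lam_{i_0j_0}(v)>0\quad\text{and}\quad\lam_{i_0j_0}(v)>\lam_{ij}(v)\text{ for every }(i,j)\in R(X)\setminus\{(i_0,j_0)\}.
\end{equation*}
To see this is possible, consider the continuous, positively homogeneous function $f(v):=\max_{(i,j)\in R(X)}\lam_{ij}(v)$. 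Each $\lam_{ij}$ is a nonzero linear functional, so $f$ is strictly positive on a nonempty open cone of $\goa$; inside that cone, avoiding the finitely many hyperplanes $\{\lam_{ij}=\lam_{i'j'}\}$ yields a point $v_0$ at which the maximum is uniquely attained by some $(i_0,j_0)\in R(X)$, and both displayed inequalities then persist in an open cone around $v_0$. I shrink $\hat C_0$ further to a proper subcone $\hat C$ (one whose intersection with the unit sphere has closure inside $\hat C_0$) so that there is a uniform constant $c>0$ with $\lam_{i_0j_0}(v)\ge c\norm{v}$ and $\lam_{i_0j_0}(v)-\lam_{ij}(v)\ge c\norm{v}$ for every $v\in\hat C$ and every $(i,j)\in R(X)\setminus\{(i_0,j_0)\}$.

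Finally, setting $\textbf{n}:=X_{i_0j_0}\in\gog_{i_0j_0}$, which is nilpotent, formula~\eqref{formula} yields
\begin{equation*}
Y_v:=e^{-\lam_{i_0j_0}(v)}\on{Ad}_{\exp(v)}(X)=\textbf{n}+e^{-\lam_{i_0j_0}(v)}X_\goa+\sum_{(i,j)\in R(X)\setminus\{(i_0,j_0)\}}e^{\lam_{ij}(v)-\lam_{i_0j_0}(v)}X_{ij}.
\end{equation*}
By the uniform bounds every exponential factor on the right is at most $e^{-c\norm{v}}$, hence $\norm{Y_v-\textbf{n}}\to 0$ as $\norm{v}\to\infty$ inside $\hat C$; since $Y_v$ is a scalar multiple of $\on{Ad}_{\exp(v)}(X)$ it lies in the subspace $\on{Ad}_{\exp(v)}(\goh)$, so $\on{d}(\textbf{n},\on{Ad}_{\exp(v)}(\goh))\le\norm{\textbf{n}-Y_v}<\del$ whenever $\norm{v}>R$ with $R=R(\del)$ large enough. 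The main obstacle is the root-theoretic selection of $(i_0,j_0)$ and the cone $\hat C$ — that is, locating a chamber in which one specific root in $R(X)$ strictly dominates the others — after which the rest is a direct computation from~\eqref{formula}.
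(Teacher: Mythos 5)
Your proof is correct and follows essentially the same strategy as the paper: take $X\in\goh\setminus\goa$, find a cone in $\goa$ on which one root component $X_{i_0j_0}$ of $X$ strictly dominates, and rescale $\on{Ad}_{\exp(v)}(X)$ by $e^{-\lam_{i_0j_0}(v)}$ so that only the nilpotent element $X_{i_0j_0}$ survives. The only (harmless) difference is in how the dominant root and cone are located: the paper fixes an explicit linear ordering of the roots and an explicit direction $v_0$, while you select $(i_0,j_0)$ by a genericity argument (a point in the cone $\{\max_{R(X)}\lam_{ij}>0\}$ off the finitely many hyperplanes where two roots coincide), which is equally valid.
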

\begin{proof}[Deducing Lemma~\ref{gaining unipotents} from Lemma~\ref{first step}]
Let $C=\exp(\hat{C})$ and $F_n$ be a F\o lner sequence in $C$. Let $\mu_n=\frac{1}{\av{F_n}}\int_{F_n}a_*\nu da$ be as in~\eqref{average 1}, and let
$\mu$ be a weak$^*$ limit of the $\mu_n$'s. We shall prove that $\mu$ is invariant under the one parameter unipotent subgroup of $G$, given by $u(t)=\exp(t\textbf{n})$. To prove this, let $f$ be a continuous function with compact support on $X_d$. We need to show that the following equality holds for any $t\in\bR$,
\begin{align}\label{need to prove}
\int_{X_d} f(x)d\mu=\int_{X_d} f(u(t)x)d\mu.
\end{align}
We show this for $t=1$ for example, and denote $u=u(1)$. For convenience, we further assume that $\norm{f}_\infty\le 1$. Given $\eps>0$, we can find $n_0$ and $R$, sufficiently large and $\rho>0$ sufficiently small, so that the following four conditions  hold
\begin{enumerate}
\item For any $n>n_0$ we have $$\av{\int_{X_d} f(x)d\mu-\int_{X_d} f(x) d\mu_n}<\eps\textrm{ and }\av{\int_{X_d} f(ux)d\mu-\int_{X_d} f(ux) d\mu_n}<\eps.$$
\item For any $g\in G$ such that $\on{d}\pa{g,e}<\rho$, and for any $x\in X_d$, we have $$\av{f(gx)-f(x)}<\eps.$$
\item For any $a\in C$ with $\norm{\log(a)}>R$, there exist $h_a\in H$ and $g_a\in B^G_\rho(e)$ such that $$u=g_aah_aa^{-1}.$$
\item Finally, for any $n>n_0$ we have $$\frac{\av{F_n\cap \exp\pa{B_R^\goa}}}{\av{F_n}}<\eps.$$
\end{enumerate}
(1) follows from the definition of weak$^*$ convergence, (2) follows from the fact that $f$ is continuous and has compact support, (3) is a reformulation of the  conclusion of Lemma~\ref{first step}, and (4) just follows (if $n_0$ is sufficiently large) from the fact that $\av{F_n}\to \infty$ while $R$ is fixed.
To conclude the proof we have the following series of estimates which implies~\eqref{need to prove} when taking $\eps$ to zero. We marked the equalities and estimates below
to indicate which of the above properties is used in each passage. We use the symbol $\al\sim_\eps\be$ to denote that $\al$ and $\be$ are at most $\eps$ distance apart. The only unmarked equality is in the fifth line and the reason it holds is that for $a\in A$, the measure $a_*\nu$ is $aHa^{-1}$-invariant. For $n>n_0$ and $R$ as above, we have
\begin{align*}
\int_{X_d}f(ux)d\mu \;\;\stackrel{(1)}{{\sim}_\eps} &\int_{X_d}f(ux)d\mu_n\\
\stackrel{\on{def}}{=}&\;\;\frac{1}{\av{F_n}}\int_{F_n\cap \exp\pa{B_R^\goa}}\int_{X_d}f(ux)da_*\nu da +
\frac{1}{\av{F_n}}\int_{F_n\setminus \exp\pa{B_R^\goa}}\int_{X_d}f(ux)da_*\nu da\\
\stackrel{(4)}{\sim_\eps}&\;\;\frac{1}{\av{F_n}}\int_{F_n\setminus \exp\pa{B_R^\goa}}\int_{X_d}f(ux)da_*\nu da\\
\stackrel{(3)}{=}&\;\;\frac{1}{\av{F_n}}\int_{F_n\setminus \exp\pa{B_R^\goa}}\int_{X_d}f(g_aah_aa^{-1}x)da_*\nu da\\
=&\;\;\frac{1}{\av{F_n}}\int_{F_n\setminus \exp\pa{B_R}^\goa}\int_{X_d}f(g_ax)da_*\nu da\\
\stackrel{(2)+(3)}{\sim_\eps}&\;\;\frac{1}{\av{F_n}}\int_{F_n\setminus \exp\pa{B_R}^\goa}\int_{X_d}f(x)da_*\nu da\\
\stackrel{(4)}{\sim_\eps}&\;\;\frac{1}{\av{F_n}}\int_{F_n\cap \exp\pa{B_R}^\goa}\int_{X_d}f(x)da_*\nu da+\frac{1}{\av{F_n}}\int_{F_n\setminus \exp\pa{B_R}^\goa}\int_{X_d}f(x)da_*\nu da\\
\stackrel{\on{def}}{=}&\;\;\int_{X_d}f(x)d\mu_n \stackrel{(1)}{\sim_\eps} \int_{X_d}f(x)d\mu.
\end{align*}
 \end{proof}
\begin{proof}[Proof of Lemma~\ref{first step}]
As we assume that $H$ is not contained in $A$, we conclude that there is a root space (say of a positive root) $\gog_{ij}$ such that
\begin{equation}\label{root proj}
\pi_{ij}(\goh)=\gog_{ij}.
\end{equation}
Assume that $\lam_{i_0j_0}$ is the maximal positive root for which~\eqref{root proj} is satisfied with respect to the ordering of the roots described in the previous subsection. Let $X=X_\goa+\sum_{ij}X_{ij}\in \goh$ be such that $X_{i_0j_0}\ne 0$.
Let
\begin{align}\label{t}
v_0&=p_\goa\pa{\on{diag}\big(\underbrace{j_0,j_0-1,\dots,3,2}_{j_0-1},\underbrace{0,\dots,0}_{d-j_0+1}\big)}\in\goa.
\end{align}
The reader would easily verify that for any $(i,j)\ne(i_0,j_0)$ such that $X_{ij}\ne 0$
\begin{equation}\label{max}
\lam_{i_0j_0}\pa{v_0}\ge\lam_{ij}\pa{v_0}+1.
\end{equation}
It follows from continuity that we can choose an open cone, $\hat{C}\subset \goa$, containing the half line $\set{tv_0}_{t>0}$, such that for some $\eps>0$, the following holds: For any $v\in \hat{C}$ of norm $1$ and any $(i,j)\ne(i_0,j_0)$ such that $X_{ij}\ne 0$
\begin{equation}\label{max2}
\lam_{i_0j_0}\pa{v}\ge\lam_{ij}\pa{v}+\eps.
\end{equation}
Fix now $\del>0$ and let $R>0$ be given. Any vector in $\hat{C}$ of norm $>R$ is of the form $tv$ for $v\in \hat{C}$ of norm 1 and $t>R$. We now estimate the distance between $\on{Ad}_{\exp(tv)}(\goh)$ and the nilpotent matrix $X_{i_0j_0}\ne 0$. It follows
from~\eqref{formula},~\eqref{max2} that
\begin{align*}
\norm{\on{Ad}_{\exp(tv)}\pa{e^{-\lam_{i_0j_0}(v)t}X}-X_{i_0j_0}}&=
\norm{e^{-\lam_{i_0j_0}(v)t}\pa{X_\goa+\sum_{ij}e^{\lam_{ij}(v)t}X_{ij}}-X_{i_0j_0}}\\
&\le e^{-\lam_{i_0j_0}(v)t}\norm{X_\goa}+ \norm{\sum_{(i,j)\ne(i_0,j_0)}e^{\pa{\lam_{ij}(v)-\lam_{i_0j_0}(v)}t}X_{ij}}\\
&\le e^{-\lam_{i_0j_0}(v)t}\norm{X_\goa}+ \sum_{(i,j)\ne(i_0,j_0)}e^{-\eps t}\norm{X_{ij}}.
\end{align*}
 As the last expression goes to zero when $R\to \infty$ (recall that $t>R$), the lemma follows because $e^{-\lam_{i_0j_0}(v)t}X\in\goh$.
\end{proof}
We  now prove Theorem~\ref{EKL}
\begin{proof}[Proof of Theorem~\ref{EKL}]
Let $L$ denote the identity component of the closed subgroup
\begin{equation}\label{Lstab}
\on{Stab}_G(\mu)<G.
\end{equation}
Let $\gol$ denote its Lie algebra. It follows from~\eqref{lie alg}, that $\gol=\goa\oplus\bigoplus_{(i,j)\in I}\gog_{ij}$, where $I$ is some subset of $\set{(i,j):1\le i\ne j\le d}$. Our assumption that $\mu$ is invariant under a one parameter unipotent subgroup,
implies that $I$ is not trivial and in fact, $\mu$ is invariant under a one parameter unipotent group of the form $\set{u_{i_0j_0}(t)}=\exp(\gog_{i_0j_0})$. Choose an element $a\in A$ such that $\chi_{i_0j_0}(a)>1$. The expanding horospherical subgroup of $G$ with respect to $a$ is $$G_a^+=\exp\pa{\oplus_{\set{ij:\chi_{ij}(a)>1}}\gou_{ij}}.$$ It follows from~\cite[Theorem 7.6]{EL1} that $\on{h}_\mu(a)=\on{h}_\mu(a,G_a^+)$, where $\on{h}_\mu\pa{a,G_a^+}$ is the so called ``entropy contribution'' of $G_a^+$. Also, from~\cite[Corollary 9.10]{EL1} it follows that
\begin{equation}\label{entcont}
\on{h}_\mu(a,G_a^+)=\sum_{\set{ij:\chi_{ij}(a)>0}}\on{h}_\mu\pa{a,\set{u_{ij}(t)}}.
\end{equation}
From~\cite[Theorem 7.9]{EL1} we conclude that as $\mu$ is $\set{u_{i_0j_0}(t)}$-invariant, the summand $\on{h}_\mu\pa{a,\set{u_{i_0j_0}(t)}}$ in the
right hand side of~\eqref{entcont} equals $\log \av{\det Ad_a|_{\gou_{i_0j_0}}}>0$.  Hence we deduce that $\on{h}_\mu(a)>0$. We can now apply~\cite[Theorem 1.3]{EKL} to conclude that $\mu$ is the $L$-invariant probability measure supported on a periodic $L$-orbit. Note that although it is not stated explicitly in~\cite[Theorem 1.3]{EKL}
that $L$ is reductive, it is proved there that this is indeed the case (see also~\cite{LW} for a full classification of the possible groups $L$ which could arise in this way).
 \end{proof}

Appealing
to~\cite[Theorem 1.3]{EKL} is slightly artificial as the main difficulty in its proof is to use positivity of entropy to
deduce invariance under a unipotent. Here we start with a measure which is already invariant under a unipotent. Moreover, \cite[Theorem 1.3]{EKL} is only applicable for $d \geq 3$ (which is the case we are interested in), whereas Theorem~\ref{EKL} also holds for $d = 2$.

We sketch below an alternative argument, based directly on Ratner's Measure Classification Theorem \cite[Thm. 1]{R-annals} (a similar argument can be found in \cite{MT}).

\begin{proof}[Alternative proof of Theorem~\ref{EKL}]
Suppose $\mu$ is $A$-invariant and ergodic probability measure on $X _ d$ invariant under a one parameter unipotent subgroup $u _ t$. Since $\mu$ is also invariant under $a u _ t a^{-1}$ for any $a \in A$, and since the group of $g \in G$ preserving $\mu$ is closed, by going to the limit we may assume that $\{u _ t\}$ is normalized by $A$.

Let $\mu = \int _ \Xi \mu _ \xi \,d \rho (\xi)$ be the ergodic decomposition of $\mu$ with respect to the action of $u _ t$. Let $L _ \xi$ denotes the connected component of identity of $\operatorname{stab} _G \mu _ \xi$. By Ratner's Measure Classification Theorem, for $\rho$-almost every $\xi$ the measure $\mu _ \xi$ is supported on a single periodic $L _ \xi$-orbit. Since $A$ normalizes $\{u_t\}$, the group $A$ acts on the space of $\{u_t\}$-ergodic components $\Xi$, and moreover, by $A$-invariance of $\mu$, for any $a \in A$ and $\xi \in \Xi$ it holds that
\begin{equation*}
L _ {a \xi} = a L _ \xi a^{-1}
.\end{equation*}
By Poincare recurrence for the action of $A$ it is easy to deduce that $L _ \xi$ is a.s.\ normalized by $A$, and by ergodicity it follows that there is a connected group $L$ so that $L _ \xi = L$\ $\rho$-almost everywhere.

If $L$ were not reductive, one can find an element $a \in A$ so that $\det Ad(a)|_{\lie L}<1$, and since we have already shown that $\mu$-a.e.\ $x \in X _ d$ lies on a periodic $L$-orbit it follows that in this case $a ^ nx \to \infty$\ $\mu$-a.e.: in contradiction to Poincare recurrence. Moreover, if $Lx$ is periodic so is $[L,L]x$. Since $u_t \in [L,L]$, the natural measure
on $Lx$ will not be $u_t$ ergodic unless $L=[L,L]$ --- in contradiction to the construction of $L$ using the ergodic decomposition of $\mu$. Therefore $L$ is semisimple. Similarly $L x$ periodic for a semisimple $L$ implies that $H x$ is closed for $H=N _ G (L) ^ 0$ (this can be deduced e.g.\ from Lemma~\ref{closed orbits'} below).

Finally, $H = A L$ unless $L$ fixes some vector in $\bR ^ d$. But then since $L x$ is periodic, for any $\ell \in L$ we have that
$\left\{ (\ell - 1)v : v \in x \right\} $ is a nontrivial proper additive subgroup of the lattice $x$, which for an appropriate choice of inner product in $\bR ^ d$ is contained in the orthogonal complement to the subspace of $\bR ^ d$ fixed by $L$. It follows that $x$ intersects nontrivialy an $L$-invariant proper subspace of $\bR ^ d$, and since $L$ is normalized by $A$ one can find an element $a \in A$ contracting the subspace, hence again $a^nx \to \infty$: which cannot happen for a typical $x$.

To conclude we have shown that $\mu$ is $A$ and $L$-invariant and supported on a single closed orbit of $H=AL$, hence this orbit must be periodic and we are done.
\end{proof}

\section{Application to Diophantine approximations}\label{applications}
The proof of Theorem~\ref{theorem 5}, which is the subject of this section, is merely a simple application of Theorem~\ref{theorem 2}.
\begin{proof}[Proof of Theorem~\ref{theorem 5}]
Let $\Lam=\on{Span}_\bZ\set{1,\al_2,\dots,\al_d}$ be the lattice spanned by the $\bQ$-basis $\set{1,\al_2,\dots,\al_d}$, of the number field $K$. Let $\vphi$ be a geometric embedding of $K$ in $\bR^d$ (as in~\eqref{g.e}) and assume that the first embedding is chosen to be the identity. Let $x_\Lam\in X_d$ be the lattice
corresponding to $\Lam$. Let
$P_1^-=\set{\pa{
\begin{array}{ll}
b&0\\
\vec{c}&D
\end{array}}\in G}$ be as in~\eqref{P_k} (i.e.\ $b\in \bR,\vec{c}\in\bR^{d-1}$ and $D\in\GL_{d-1}(\bR)$), and $a_1(t)$ be as in~\eqref{a_k}. The reader will easily verify that there exists a nonzero constant $c\in\bR$ and $p\in P_1^-$
such that
\begin{equation}
\pa{
\begin{array}{ll}
1&v^t\\
0 & I_{d-1}
\end{array}
}=
cp\pa{\begin{array}{llll}
\vdots &\vdots& &\vdots\\
\vphi(1)&\vphi(\al_2)&\dots&\vphi(\al_d)\\
\vdots &\vdots & &\vdots
\end{array}},
\end{equation}
where the matrix on the the right in the above equation, has $\vphi(1),\vphi(\al_i)$ as its columns (here $c$ is just the inverse of the determinant of the lattice $\vphi(\Lam)$). It follows that $z_v=px_\Lam$. Theorem~\ref{theorem 2} now implies that $z_v$ is $A$-regular of periodic type (note that indeed the conditions of Theorem~\ref{theorem 2} are satisfied; i.e.\ 1 is co-prime to $d$ and if $K$ is of type $(r,s)$, then $r\ge 1$ and $\set{a_1(t)}<T^{(r,s)}$).
We now argue that $x_v$ is $A$-regular of periodic type too.

Denote by $x^*$ the dual lattice to a lattice $x\in X_d$ and the involution $g\mapsto (g^{-1})^t$ on $G$ by $g\mapsto g^*$. For any $g\in G$ and $x\in X_d$, $(gx)^*=g^*x^*$, and as $e_\Gamma$ is self-dual we have that $ge_\Gamma = g^*e_\Gamma$.
Hence $(z_{(-v)})^*=x_v$, and it follows that
 if $L<G$ is such that that $\overline{Az_{(-v)}}=Lz_{(-v)}$ is a finite volume orbit, then
 \begin{align}\label{duality}
 \overline{Ax_v}=\overline{Az_{(-v)}^*}=\pa{\overline{\pa{Az_{(-v)}}}}^*=\pa{Lz_{(-v)}}^*=L^*x_v.
 \end{align}
Hence $x_v$ is $A$-regular of periodic type. Note that in~\eqref{duality} we used the fact that $A^*=A$. In fact, it is not hard to see that $L^*=L$ too.
\end{proof}
\section{Examples of $A$-irregular points in $X_3$}\label{irregular}
In this section we shall prove Theorem~\ref{irregular points theorem}, indeed prove the somewhat more precise Theorem~\ref{theorem 3'}.
Recall that for a vector $v=(\al,\be)^t\in\bR^2$ we denote by $x_v,z_v$, the lattices in $X_3$ which are spanned by the columns of the matrices
\begin{equation}\label{another time}
h_v=\begin{pmatrix}
1&0&0\\
\al&1&0\\
\be&0&1
\end{pmatrix},\;
g_v=\begin{pmatrix}
1&\al&\be\\
0&1&0\\
0&0&1
\end{pmatrix},
\end{equation}
respectively.
 We first note that any statement about the $A$-orbit closure of the lattice $x_v$ immediately implies a corresponding statement for the lattice $z_{-v}$. This is because (as in~\eqref{duality}) $\overline{Az_{-v}}=\pa{\overline{Ax_v}}^*$.
Hence from now on we confine our discussion to lattices of the form $x_v$.
Before we turn to state Theorem~\ref{theorem 3'} we need to state some lemmas and introduce some notation. The following is well known.
\begin{lemma}\label{closed orbits'}
Let $\rho:\SL_d(\bR)\to \on{GL}(V)$ be a $\bQ$-representation, $v_0\in V$ a rational vector, and $H=\set{g\in\SL_d(\bR):\rho(g)v_0=v_0}$. Then the orbit $H\SL_d(\bZ)$ is closed in $\SL_d(\bR)/\SL_d(\bZ)$.
\end{lemma}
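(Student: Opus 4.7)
The plan is to show that $H\Ga$ is closed as a subset of $G:=\SL_d(\bR)$, where $\Ga=\SL_d(\bZ)$; since $\Ga$ is discrete, this immediately implies that the orbit $H\Ga/\Ga$ is closed in $G/\Ga$. The strategy reduces this to proving that the $\Ga$-orbit $\rho(\Ga)v_0\subset V$ is discrete (and hence closed) in $V$.

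First I would consider the continuous orbit map $\pi:G\to V$, $\pi(g)=\rho(g)v_0$. Its fiber over $v_0$ is by definition $H$, and more generally a direct computation shows that $\pi^{-1}\bigl(\rho(\Ga)v_0\bigr)=H\Ga$: indeed, $\pi(g)=\rho(\gamma)v_0$ for some $\gamma\in\Ga$ is equivalent to $\rho(g\gamma^{-1})v_0=v_0$, i.e.\ $g\in H\gamma\subset H\Ga$. Thus, once $\rho(\Ga)v_0$ is shown to be closed in $V$, the set $H\Ga$ is closed in $G$ as a preimage of a closed set under a continuous map, and the lemma follows.

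To establish that $\rho(\Ga)v_0$ is discrete in $V$, I would use the fact that since $\rho$ is a $\bQ$-representation of the Chevalley group $\SL_d$, there exists a $\bZ$-lattice $L\subset V$ which is preserved by $\rho(\Ga)$. For $\SL_d$ this can be obtained very concretely: every $\bQ$-rational representation is a subquotient of tensor, symmetric, and exterior powers of the standard representation on $\bQ^d$, each of which carries a natural $\SL_d(\bZ)$-invariant lattice; alternatively one may invoke the general existence of admissible lattices in representations of Chevalley groups. Since $v_0\in V_\bQ$, we may choose a positive integer $N$ with $Nv_0\in L$, so that $\rho(\Ga)v_0\subset \frac{1}{N}L$. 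The set $\frac{1}{N}L$ is a lattice in the $\bR$-span of $L$ in $V$, hence is discrete, and therefore $\rho(\Ga)v_0$ is discrete in $V$.

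The only nontrivial ingredient in this plan is the existence of the $\rho(\Ga)$-invariant lattice $L$, which is a standard piece of the theory of integral structures on rational representations of semisimple algebraic groups; once granted, the rest of the argument is a formality about preimages under continuous maps. (In the applications below $V$ will be an explicit tensor construction on $\bR^d$, so the required lattice $L$ is manifest and one need not invoke any general theory.)
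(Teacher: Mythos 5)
The paper itself offers no proof of this lemma (it is cited as ``well known''), so there is nothing to compare against; your argument is the standard one and is essentially correct. The reduction to closedness of $H\Ga$ in $G$ is right (a subset of $G/\Ga$ is closed iff its full preimage in $G$ is closed; discreteness of $\Ga$ is not actually needed for this), and the discreteness of $\rho(\Ga)v_0$ via a $\rho(\Ga)$-invariant lattice $L$ with $Nv_0\in L$ is exactly the standard mechanism. One small slip: with $\pi(g)=\rho(g)v_0$ the condition $\pi(g)=\rho(\ga)v_0$ is equivalent to $\rho(\ga^{-1}g)v_0=v_0$, i.e.\ $g\in\ga H$, so $\pi^{-1}\bigl(\rho(\Ga)v_0\bigr)=\Ga H$, not $H\Ga$ (your manipulation $\rho(g)v_0=\rho(\ga)v_0\Leftrightarrow\rho(g\ga^{-1})v_0=v_0$ multiplies $\rho(\ga)^{-1}$ on the wrong side). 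This is harmless: since inversion is a homeomorphism of $G$ carrying $\Ga H$ onto $H\Ga$ (as $H^{-1}=H$ and $\Ga^{-1}=\Ga$), closedness of $\Ga H$ gives closedness of $H\Ga$; alternatively, work from the start with the map $g\mapsto\rho(g^{-1})v_0$, whose preimage of $\rho(\Ga)v_0$ is exactly $H\Ga$. With that correction the proof is complete, and your closing remark is apt: in the applications in \S 6 the representation is an explicit tensor construction on $\bR^d$, so the invariant lattice is manifest and no general theory of admissible lattices is needed.
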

Let
\begin{equation}
H^{(1)}=\set{\pa{
\begin{array}{lll}
*&*&0\\
*&*&0\\
0&0&*
\end{array}
}\in G}\;;\;
H^{(2)}=\set{\pa{
\begin{array}{lll}
*&0&*\\
0&*&0\\
*&0&*
\end{array}
}\in G}.
\end{equation}
For each $i$,  the orbit of the group $H^{(i)}$ through the identity coset $e_\Ga\in X_3$ is closed by Lemma~\ref{closed orbits'}. For example, for $i=1$, one takes the appropriate exterior product of the adjoint representation of $G$ on its Lie algebra, and $v_0$ a rational
vector corresponding to the one parameter subgroup $\set{\diag{e^t,e^t,e^{-2t}}:t\in\bR}$ (as $H^{(1)}$ is the centralizer in $G$ of this one parameter subgroup, it is equal to the stabilizer of $v_0$). It now follows that for any matrix $p\in\SL_3(\bQ)$, the orbits,
$H^{(i)}pe_\Ga$, are closed in $X_3$ (this is done by considering the conjugations of $H^{(i)}$ by $p^{-1}$). For a positive integer $q$, let us consider the following closed orbits:
\begin{align}\label{M 1'}
M^{(1)}_q&=H^{(1)}\pa{\begin{array}{lll}
1&0&0\\
0&1&0\\
q^{-1}&0&1
\end{array}}e_\Ga\\ &\nonumber =\set{\pa{\begin{array}{lll}
a&b&0\\
c&d&0\\
\frac{1}{q\det B}&0&\frac{1}{\det B}
\end{array}}e_\Ga\in X_3:B = \pa{\begin{array}{ll} a&b\\c&d\end{array}}\in \on{GL}_2(\bR)};
\end{align}
\begin{align}\label{M 2'}
M^{(2)}_q&=H^{(2)}\pa{\begin{array}{lll}
1&0&1\\
q^{-1}&1&0\\
1&0&1
\end{array}}e_\Ga\\ &\nonumber =\set{\pa{\begin{array}{lll}
a&0&b\\
\frac{1}{q\det B}&\frac{1}{\det B}&0 \\
c&0&d
\end{array}}e_\Ga\in X_3:B = \pa{\begin{array}{ll} a&b\\c&d\end{array}}\in \on{GL}_2(\bR)}.
\end{align}
We will prove that the accumulation points of the orbit $Ax_v$ belong to $M^{(i)}_q$, for certain $q$'s, hence we wish to have a convenient characterization of the lattices composing $M^{(i)}_q$. This characterization is given by the following simple lemma.
\begin{lemma}\label{char of M}
A lattice $x\in X_3$ belongs to $M^{(1)}_q$ if and only if there exists $a\in A, g\in G$, and integers $\ell_1,\ell_2$, which generate $\bZ/q\bZ$ such that $x=age_\Ga$, and $g$ is of the following form
\begin{equation}\label{particular form 1}
g=\begin{pmatrix}
*&*&0\\
*&*&0\\
\frac{\ell_1}{q}&\frac{\ell_2}{q}&1
\end{pmatrix}.
\end{equation}
A lattice $x\in X_3$ belongs to $M^{(2)}_q$ if and only if there exists $a\in A, g\in G$, and integers $\ell_1,\ell_2$, which generate $\bZ/q\bZ$ such that $x=age_\Ga$, and $g$ is of the following form
\begin{equation}\label{particular form 2}
g=\begin{pmatrix}
*&0&*\\
\frac{\ell_1}{q}&1&\frac{\ell_2}{q}\\
*&0&*
\end{pmatrix}.
\end{equation}
\end{lemma}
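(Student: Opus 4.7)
The plan is to match $\SL_3(\bZ)$-coset representatives of $x$ with specific block structures, using the explicit block description of $H^{(1)}$. I will outline the argument for $M^{(1)}_q$; the case of $M^{(2)}_q$ is completely analogous after interchanging the roles of the second and third coordinates.

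For the direction $(\Rightarrow)$, I start with $x = m e_\Ga$, where $m$ has the explicit shape in~\eqref{M 1'} with upper-left $2\times 2$ block $B$. If $\det B < 0$, right-multiplication by $\diag{-1,1,-1}\in\SL_3(\bZ)$ preserves both the $\Gamma$-coset and the form~\eqref{M 1'}, while replacing $B$ by a matrix of positive determinant; so I may assume $\det B > 0$. Setting $a = \diag{\sqrt{\det B},\sqrt{\det B},1/\det B}\in A$, a direct computation shows that $g := a^{-1}m$ has the shape~\eqref{particular form 1} with $\ell_1=1, \ell_2=0$, and $\gcd(1,0,q)=1$ trivially.

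For the converse $(\Leftarrow)$, the goal is to find $h\in H^{(1)}$ and $\gamma\in\SL_3(\bZ)$ with $ag = h p_0 \gamma$, where $p_0$ is the defining representative of $M^{(1)}_q$ used in~\eqref{M 1'}. Comparing third columns of $ag$ and $hp_0\gamma$ forces the third column of $\gamma$ to be $(0,0,1)^t$, so $\gamma$ has the block form
\[ \gamma = \begin{pmatrix}D & 0\\ v & 1\end{pmatrix} \]
with $D\in\SL_2(\bZ)$ and $v\in\bZ^2$ a row vector. Matching third rows then reduces to the arithmetic condition $(d_{11}, d_{12})\equiv(\ell_1, \ell_2)\pmod{q}$ on the first row of $D$, with $v = ((\ell_1-d_{11})/q, (\ell_2-d_{12})/q)$ determined once $D$ is chosen. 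The key input is thus: given $\gcd(\ell_1, \ell_2, q)=1$, produce a primitive pair $(d_{11}, d_{12})\in\bZ^2$ (so it can be completed to an $\SL_2(\bZ)$ matrix) lying in the prescribed residue class modulo $q$. I would do this by fixing $d_{11}=\ell_1$ and picking $m\in\bZ$ so that $d_{12}:=\ell_2+mq$ is coprime to $\ell_1$: this is possible by an inclusion-exclusion argument over the finitely many primes dividing $\ell_1$, using the fact that no prime divides all three of $\ell_1,\ell_2,q$. The corresponding $h$ is then read off from the upper-left $2\times 2$ block of $ag\gamma^{-1}$.

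The only substantive step is the arithmetic one above, which essentially amounts to the surjectivity of the reduction $\SL_2(\bZ)\to\SL_2(\bZ/q\bZ)$; all other steps are routine block-matrix manipulations.
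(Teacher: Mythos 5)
Your argument is correct and follows essentially the same route as the paper: both directions come down to exhibiting an explicit block lower-triangular $\gamma\in\Ga$ whose top-left $2\times2$ block is an $\SL_2(\bZ)$ lift of the residue data, i.e.\ to the surjectivity fact that the paper isolates as Remark~\ref{generation} (the paper applies it to pull $(\ell_1,\ell_2)$ back to $(1,0)$ mod $q$, you push $(1,0)$ forward to $(\ell_1,\ell_2)$ mod $q$ — the same computation read in the other direction). One small patch is needed in your arithmetic step: the recipe ``fix $d_{11}=\ell_1$ and choose $d_{12}=\ell_2+mq$ coprime to $\ell_1$'' fails when $\ell_1=0$ (e.g.\ $q=5$, $(\ell_1,\ell_2)=(0,2)$: coprimality to $0$ forces $d_{12}=\pm1$, which need not lie in the class of $\ell_2$ mod $q$); in that case take $d_{11}=\ell_1+q$ instead, after which $\gcd(d_{11},\ell_2)=\gcd(q,\ell_2)=1$ already and any $m$ works.
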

\begin{remark}\label{generation}
It is a simple exercise to show that $\ell_1,\ell_2$ generate $\bZ/q\bZ$ if and only if there exists a matrix $\pa{k_{ij}}\in\SL_2(\bZ)$ such that $(\ell_1,\ell_2)=(1,0)\pa{k_{ij}} \on{mod}q$.
\end{remark}
\begin{proof}
We shall prove the lemma for $M^{(1)}_q$, leaving the corresponding statement for $M^{(2)}_q$ to the reader. Let $x\in M^{(1)}_q$. It follows from~\eqref{M 1'}, that up to the action of $A$, $x=ge_\Ga$, with $g$ in the form of~\eqref{particular form 1}, with $\ell_1=1,\ell_2=0$. For the other direction, let $x=age_\Ga$, where $a\in A$ and $g$ is in the form of~\eqref{particular form 1}.
By Remark~\ref{generation}, as $\ell_1,\ell_2$ generate $\bZ/q\bZ$, there exists a matrix $\pa{k_{ij}}\in\SL_2(\bZ)$, such that $(\ell_1,\ell_2)\pa{k_{ij}}=(1,0) \on{mod}(q)$, i.e.\  $k_{11}\ell_1+k_{21}\ell_2=n_1q+1$, and $k_{12}\ell_1+k_{22}\ell_2=n_2q$, for some integers $n_1,n_2$. Let $\ga\in \Ga$ be the matrix
$$\ga=\begin{pmatrix}
k_{11}&k_{12}&0\\
k_{21}&k_{22}&0\\
-n_1&-n_2&1
\end{pmatrix}.$$
Then a short calculation shows that the matrix $ag\ga$, which represents the lattice $x$, is of the form given in~\eqref{M 1'}, i.e.\ the lattice $x$ belongs to $M^{(1)}$ as desired.
\end{proof}
Finally, let $$a^{(1)}(t)=\diag{e^{-t},e^t,1},a^{(2)}(t)=\diag{e^{-t},1,e^t}.$$
\begin{theorem}\label{theorem 3'}
Let $v=(\al,\be)^t\in\bR^2$ be such that $\al,\be$ are irrational and $1,\al,\be$ linearly dependent over $\bQ$. Suppose
$\be=\frac{p_1}{q}\al+\frac{p_2}{q}$, $\al=\frac{p_1'}{q'}\be+\frac{p_2'}{q'}$, where these equations are written in reduced forms, i.e.\ $q$ (resp.\ $q'$) is a positive integer, and $p_1,p_2$ (resp.\ $p_1',p_2'$) generate $\bZ/q\bZ$ (resp.\ $\bZ/q'\bZ$). Then the following holds:
\begin{enumerate}
\item\label{T1} The orbit $Ax_v$ is disjoint from $M^{(1)}_q\cup M^{(2)}_{q'}$.
\item\label{T2} If $a_n\in A$ is a sequence such that the distance from $a_n$ to the two rays $\cup_{i=1,2}\set{a^{(i)}(t)}_{t>0}$, goes to $\infty$,
then the sequence $a_nx_v$ diverges (i.e.\ it has no converging subsequences in $X_3$).
\item\label{T3} Let $\Om_i=\set{\overline{\set{a^{(i)}(t)x_v}}_{t>0}\setminus\set{a^{(i)}(t)x_v}_{t>0}}$, and write $A\Om_i=\cup_{a\in A}a\Om_i$.
Then $$\overline{Ax_v}\setminus Ax_v=A\Om_1\cup A\Om_2\quad\textrm{and}\quad A\Om_i\ne\emptyset\quad\textrm{for } i=1,2.$$
\item\label{T4} Finally, $A\Om_1\subset M^{(1)}_q$, and $A\Om_2\subset M^{(2)}_{q'}.$
\end{enumerate}
\end{theorem}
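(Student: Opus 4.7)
The plan is to split the proof into the four parts corresponding to claims~(\ref{T1})--(\ref{T4}), using Mahler's compactness criterion and an explicit analysis of short vectors in the lattice $ax_v$ for $a\in A$.

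For (\ref{T1}), I would apply Lemma~\ref{char of M} to observe that any lattice in $M^{(1)}_q$ has its projection onto the $e_3$-axis equal to a discrete subgroup of the form $\frac{s_3}{q}\bZ$, which has $\bZ$-rank one. By contrast, the $e_3$-projection of $ax_v$, for $a=\diag{e^{t_1},e^{t_2},e^{t_3}}$, is generated by $e^{t_3}$ and $e^{t_3}\beta$; since $\beta$ is irrational this has $\bZ$-rank two and is dense in $\bR$. Hence $ax_v\notin M^{(1)}_q$; the symmetric argument with the $e_2$-projection (using that $\alpha$ is irrational) gives $ax_v\notin M^{(2)}_{q'}$.

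For (\ref{T2})--(\ref{T3}), I would analyze short vectors in $ax_v$ via Mahler. A generic lattice vector has the form $(n_1e^{t_1},e^{t_2}(n_1\alpha+n_2),e^{t_3}(n_1\beta+n_3))$, and after minimizing over $n_2,n_3$ its coordinates are essentially $(|n_1|e^{t_1},e^{t_2}\idist{n_1\alpha},e^{t_3}\idist{n_1\beta})$, with product $|n_1|\idist{n_1\alpha}\idist{n_1\beta}$ (using $\sum t_i=0$). The relation $q\beta=p_1\alpha+p_2$ yields the key identity $\idist{q\cdot n_1\beta}=\idist{p_1 n_1\alpha}\le|p_1|\idist{n_1\alpha}$, so for $n_1=q_k$ a convergent denominator of $\alpha$ the product drops to size $\sim(qq_k)^{-1}$; a symmetric estimate holds along convergents of $\beta$. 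A case analysis on $n_1$ modulo $q$ and $q'$ then shows that admissible short vectors can exist only when $(t_1,t_2,t_3)$ lies near one of the two rays $\{a^{(i)}(t)\}_{t>0}$. This gives (\ref{T2}) (divergence off the rays), and combined with (\ref{T1}) yields $\overline{Ax_v}\setminus Ax_v=A\Omega_1\cup A\Omega_2$ required by (\ref{T3}); nonemptiness of each $\Omega_i$ is established by constructing explicit accumulation points along subsequences $t_n=\log q_{k_n}$ of convergent denominators.

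For (\ref{T4}), I would identify the limits precisely. Every $L_\infty\in\Omega_1$ contains $e_3=(0,0,1)$ (a vector of $a^{(1)}(t)x_v$ for every $t$) and limit vectors $(c_1,c_2,\gamma)$ arising from convergents $n_1=q_k$ of $\alpha$. The crucial computation: since $q\cdot q_k\beta=p_1 q_k\alpha+p_2 q_k\equiv p_1 q_k\alpha\pmod{\bZ}$ and $\idist{q_k\alpha}\to 0$, one gets $\idist{q\cdot q_k\beta}\to 0$, i.e.\ $q_k\beta$ is within $O(|p_1|/(qq_k))$ of some $\ell_k/q$. Thus the limiting third coordinate $\gamma$ lies in $\frac{1}{q}\bZ$, and the $e_3$-projection of $L_\infty$ is contained in $\frac{1}{q}\bZ$. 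Using $\gcd(p_1,p_2,q)=1$ together with $\gcd(p_k,p_{k+1})=1$ for consecutive convergents of $\alpha$, the residues $\ell_k\pmod q$ obtained generate all of $\bZ/q\bZ$, so the projection equals $\frac{1}{q}\bZ$; Lemma~\ref{char of M} then places $L_\infty\in M^{(1)}_q$, and $A$-invariance of $M^{(1)}_q$ yields $A\Omega_1\subset M^{(1)}_q$. The containment $A\Omega_2\subset M^{(2)}_{q'}$ follows symmetrically from $\alpha=(p_1'\beta+p_2')/q'$.

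The main obstacle is the short-vector analysis in (\ref{T2})--(\ref{T3}): one must rule out bounded orbits in \emph{any} direction except the two rays, even though Littlewood's product $|n_1|\idist{n_1\alpha}\idist{n_1\beta}$ is small for many $n_1$ and many $a\in A$ in principle. A secondary technical point is verifying in (\ref{T4}) that the residues produced by convergents genuinely cover $\bZ/q\bZ$ using only the hypothesis $\gcd(p_1,p_2,q)=1$; this will reduce to an elementary observation about how $p_1 p_k\pmod q$ varies with $k$ combined with the coprimality of consecutive convergent numerators.
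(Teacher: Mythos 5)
Your strategy for parts (2)--(4) is essentially the paper's: Mahler's criterion plus a short-vector analysis driven by the congruence $q\beta=p_1\alpha+p_2$ (which converts good approximations of $\alpha$ into simultaneous approximations), Dirichlet/convergents to produce short vectors off the two rays, and the projection $\pi$ to $X_2$ both for nonemptiness of $\Om_i$ (the geodesic with irrational endpoint $\al$ does not diverge) and for the generation of $\bZ/q\bZ$. Your part (1) is a correct alternative to the paper's direct matrix computation: the set of third coordinates of $ax_v$ is the dense group $e^{t_3}(\beta\bZ+\bZ)$, whereas for a member of $M^{(1)}_q$ it is the discrete group $\frac{1}{q\det B}\bZ$; this is clean and valid.

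There is, however, a genuine gap in part (4). Lemma~\ref{char of M} does not assert that a lattice whose set of third coordinates equals $\frac1q\bZ$ lies in $M^{(1)}_q$: it requires a basis of the lattice whose third vector is $e_3=(0,0,1)^t$, i.e.\ it requires $e_3$ to be \emph{primitive} in the limit lattice $L_\infty$. If some vector $(0,0,1/g)$ with $1<g\mid q$ belonged to $L_\infty$, then $L_\infty$ would have third-coordinate set containing $\frac1q\bZ$ and yet would \emph{not} lie in $M^{(1)}_q$ (by~\eqref{M 1'}, a member of $M^{(1)}_q$ whose third-coordinate set is $\frac1q\bZ$ meets the $e_3$-axis exactly in $\bZ e_3$). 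Such a vector could a priori arise as a limit of $\pa{e^{-t_i}k_i,\,e^{t_i}(k_i\al+m_i),\,k_i\be+n_i}$ with $k_i\neq0$ and $|k_i|\idist{k_i\al}\to0$; irrationality of $\al$ alone does not exclude this, and your argument controls only the \emph{set} of third coordinates, not the basis structure. The paper closes this off by noting that the entire ray $a^{(1)}(t)x_v$ lies in the closed orbit $Se_\Ga$ of the stabilizer $S$ of $e_3$ in $G$ (closedness via Lemma~\ref{closed orbits'}), so $L_\infty\in Se_\Ga$ and automatically has a basis of the shape demanded by Lemma~\ref{char of M}. You need this step or an equivalent. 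Relatedly, for the generation of $\bZ/q\bZ$ your appeal to consecutive convergents must be implemented so that \emph{both} generating vectors converge along the same subsequence defining $L_\infty$: the paper's device is to take the shortest and second-shortest vectors of the projected lattice $a(t_i)x_\al\in X_2$, which form a $\bZ$-basis (determinant $\pm1$ --- the rigorous form of your coprimality of consecutive convergents) and are bounded above and below along the subsequence, so they lift to convergent sequences; then $(\ell_1,\ell_2)=(p_2,p_1)\ga \bmod q$ with $\ga\in\GL_2(\bZ)$ generates because $(p_2,p_1)$ does.
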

\begin{proof}
We first argue why part~\eqref{T1} of the theorem follows from the fact that both $\al,\be$ are irrational. Working with~\eqref{M 1'}, we see that it suffice to show that there is no $\ga\in \Ga$ such that
\begin{equation}\label{cant rep}
\pa{\begin{array}{lll}
1&0&0\\
\al&1&0 \\
\be&0&1
\end{array}}\ga=
\pa{\begin{array}{lll}
a&b&0\\
c&d&0\\
\frac{1}{q\det B}&0&\frac{1}{\det B}
\end{array}}\;,\textrm{ where }B=\pa{\begin{array}{ll} a&b\\c&d\end{array}}\in \on{GL}_2(\bR).
\end{equation}
An analogue statement should be verified when working with~\eqref{M 2'}. In order to argue why there is no $\ga\in\Ga$ solving~\eqref{cant rep}, note first that the rightmost column of $\ga$ must be of the form $(0,0,\det B)^t$. This implies that $\det B$ is an invertible integer i.e.\ $\det B=\pm 1$; this follows because the determinant of $\ga$ equals $\det B$ times the determinant of the two by two upper left block of $\ga$, which is also an integer. It now follows that as $\be$ is irrational, the leftmost and middle columns of $\ga$, must be of the form $(0,*,*)^t$. Hence the first row of $\ga$ equals zero, a contradiction.

We now prove part~\eqref{T2} of the theorem. Let $a_n=\diag{e^{-t_n-s_n},e^{s_n},e^{t_n}}\in A$ be a diverging sequence (i.e.\ $\av{t_n}+\av{s_n}\to\infty$), such that $a_nx_v\to x,$ for some $x\in X_3$. Our goal is to show that the sequence
$\min\set{\av{t_n},\av{s_n}}$ is bounded. We will use the following fact about converging sequences of lattices;  a converging sequence in $X_3$ has a
positive lower bound on the lengths of the shortest nonzero vectors of its elements.
We first argue that both $t_n$ and $s_n$ are bounded from below.
This is because the lattice $x_v$ contains the standard basis vectors $e_2,e_3$, and if for instance $t_n$ is not bounded from below, then $a_nx_v$
contains the nonzero vector $a_ne_3$ which is arbitrarily short when $t_n$ is negative and arbitrarily large in absolute value. Hence, we can assume
that $t_n,s_n\ge 0$ (this is done by replacing the sequence $a_n$ by a constant multiple of it, $aa_n$, if necessary).
We now exclude the possibility of  $\min\set{t_n,s_n}$ being unbounded from above. We use Dirichlet's theorem which asserts that for any real number, $\theta\in\bR$, and any $T>0$, there exist $k,m\in\bZ$, with $0<\av{k}\le T$, such that
$$\av{k\theta+m}\le \frac{1}{kT}.$$
Using this theorem, we wish to produce vectors in $a_nx_v$ which will be arbitrarily short, once $\min\set{t_n,s_n}$ is arbitrarily large.
For a given $n$, choose
\begin{align}\label{T}
T=\Bigg\{\begin{array}{ll}
e^{t_n+\frac{s_n}{2}}&\textrm{if }t_n\ge s_n\\
e^{s_n+\frac{t_n}{2}}&\textrm{if } t_n<s_n
\end{array},
\end{align}
and apply Dirichlet's theorem for $\al$ and $T$ to conclude the existence of $k,m\in\bZ$ with $0<\av{k}<T$, and
$\av{k\al + m}<\frac{1}{kT}$. As $\be=\frac{p_1}{q}\al+\frac{p_2}{q}$, this implies that
$$\av{qk\be+(p_1m-p_2k)}=\av{p_1k\al+p_2k+p_1m-p_2k}\le\frac{p_1}{kT}.$$
We conclude that there exists a vector in the lattice $a_nx_v$, of the  form
\begin{align}\label{shorty}
a_nh_v\begin{pmatrix}
qk\\
qm\\
p_1m-p_2k
\end{pmatrix}=
\pa{\begin{array}{lll}
e^{-t_n-s_n}qk\\
e^{s_n}(qk\al+qm)\\
e^{t_n}(qk\be+(p_1m-p_2k))
\end{array}},
\end{align}
which has length $\le\max\set{e^{-t_n-s_n}qT,\frac{e^{s_n}q}{T},\frac{e^{t_n}p_1}{T}}\le \max\set{p_1,q} e^{-\min\set{s_n,t_n}/2}$. Where the last inequality follows from
~\eqref{T}. This quantity is of course arbitrarily small once $\min\set{s_n,t_n}$ is not bounded from above, which concludes the proof of the
part~\eqref{T2} of the theorem.

We now prove part~\eqref{T4} of the theorem postponing part~\eqref{T3} to the end. As $M^{(1)}_q$ (resp.\ $M^{(2)}_{q'}$) is $A$-invariant, it is enough to prove that $\Om_1\subset M^{(1)}_q$ (resp.\ $\Om_2\subset M^{(2)}_{q'}$). We shall
prove that if $t_i\nearrow\infty$ is such that $x=\lim_i a^{(1)}(t_i)x_v$,
then $x\in M^{(1)}_q$, leaving the analogue statement for $a^{(2)}(t)$ to the reader.
Let $$S= \set{\pa{
\begin{array}{lll}
*&*&0\\
*&*&0\\
*&*&1
\end{array}
}\in G}.$$
From Lemma~\ref{closed orbits'}, it follows that $Se_\Ga$ is a closed orbit, and as $\set{a^{(1)}(t)}<S$ we conclude that $x\in S\Ga$. This means that $x$ has a basis composing the columns of a matrix in $S$.
It now follows from Lemma~\ref{char of M} that part~\eqref{T4} of the theorem will follow once we show the following
\begin{description}
\item[Claim 1] Any vector $w\in x$ is of the form $(*,*,\frac{\ell}{q})^t$, for some $\ell\in\bZ$.
\item[Claim 2] There exist two  vectors $w_j=(*,*,\frac{\ell_j}{q})^t\in x, j=1,2$, where $\ell_1,\ell_2$ generate $\bZ/q\bZ.$
\end{description}

To prove claim~1, let $w_i\in a^{(1)}(t_i)x_v$ be a sequence of vectors converging to $w\in x$.
There are sequences of integers $k_i,m_i,n_i$ such that
\begin{equation}\label{the w_i's}
w_i
=a^{(1)}(t_i)h_v\begin{pmatrix}k_i\\ m_i \\n_i\end{pmatrix}
=\pa{\begin{array}{lll}
e^{-t_i}k_i\\
e^{t_i}(k_i\al+m_i)\\
k_i\be+n_i\end{array}}.
\end{equation}
As $t_i\nearrow \infty$, we conclude that $k_i\al+m_i\to 0$. In other words, the distance from $k_i\al$ to $\bZ$, which we denote by $\idist{k_i\al},$ approaches zero. This implies
that the distance from $k_i\be=\frac{p_1}{q}k_i\al+\frac{p_2k_i}{q}$
 to $\frac{1}{q}\bZ$ approaches zero as well. Hence the third coordinate of $w$, which is
the limit of $k_i\be+n_i$, belongs to $\frac{1}{q}\bZ$ as desired.
In fact, a closer look shows that
$$k_i\be=\frac{1}{q}\pa{p_1m_i+p_2k_i}+\frac{\idist{k_i\al}}{q}.$$
This shows that in order to derive claim 2 we need to find two  families of sequences of integers $k_i^{(j)},m_i^{(j)},n_i^{(j)}, j=1,2$,
such that the vectors $w_i^{(j)}$ as in~\eqref{the w_i's} converge (maybe after passing to a subsequence), and such that there exist some pair $0\le\ell_1,\ell_2\le q-1$, generating $\bZ/q\bZ$, such that for any $i$, $$ \ell_j=p_1m^{(j)}_i+p_2k^{(j)}_i \on{mod}q.$$ Note that the role of $n_i^{(j)}$ is not significant and these might be chosen so as to bring $k_i^{(j)}\be$ to the unit interval.

To motivate the arguments we note the following.
There is a natural projection from
the periodic orbit $Se_\Ga$ (in which our discussion takes place) to the space of two dimensional unimodular lattices $X_2$. This projection is defined by the following formula (as we will now mix dimensions 2 and 3, we will denote hereafter $\SL_d(\bR), \SL_d(\bZ)$ by $G_d, \Ga_d$ respectively):
$$\begin{pmatrix}
a&b&0\\
c&d&0\\
\theta_1&\theta_2&1
\end{pmatrix}e_{\Ga_3} \mapsto
\begin{pmatrix}
a&b\\
c&d
\end{pmatrix}e_{\Ga_2}.$$
We denote this projection by $\pi$. It follows that in our notation $\pi(x_v)=x_\al=\begin{pmatrix}1&0\\ \al&1\end{pmatrix}e_{\Ga_2}$. If we denote $a(t)=\diag{e^{-t},e^{t}}\in G_2$, then $$\pi\pa{a^{(1)}(t_i)x_v}=a(t_i)x_\al,$$
and from the continuity of $\pi$ we deduce that $a(t_i)x_\al$ converges to $\pi(x)$.

Let $(k^{(j)}_i,m^{(j)}_i)^t\in\bZ^2, j=1,2,$ be chosen so that the vectors
$\hat{w}_i^{(1)},\hat{w}_i^{(2)}\in a(t_i)x_\al$, given by
\begin{equation}
\hat{w}_i^{(j)}=a(t_i)\begin{pmatrix}1&0\\ \al&1\end{pmatrix}\begin{pmatrix}k_i^{(j)}\\ m_i^{(j)}\end{pmatrix},
\end{equation}
are such that $\hat{w}_i^{(1)}$ is the shortest vector and $\hat{w}_i^{(2)}$ is the second shortest vector (not co-linear with $w_i^{(1)}$) in the lattice $a(t_i)x_\al$. As the first and second shortest vectors in a two dimensional lattice always form a basis, it follows that the matrix
$\ga=\begin{pmatrix} k_i^{(1)}&k_i^{(2)}\\m_i^{(1)}&m_i^{(2)}\end{pmatrix}$ has determinant $\pm 1$. Also, as $a(t_i)x_\al$ converges, the lengths of the vectors $\hat{w}^{(1)}_i$ are bounded from below, which in turn implies that the lengths of the vectors $\hat{w}^{(2)}$, are bounded from above (this follows from the fact that the covolume of the lattice $a(t_i)x_\al$ is equal to 1). It follows that, possibly after passing to a subsequence, we may assume that both sequences $\hat{w}_i^{(j)}$ converge. By passing to another subsequence if necessary, we may assume that the residue classes $p_1m_i^{(j)}+p_2k_i^{(j)}\on{mod}q, j=1,2$ are fixed and equal $\ell_j, j=1,2.$ It follows from
Remark~\ref{generation} that the equality $(\ell_1,\ell_2)=(p_2,p_1)\ga\on{mod}q$ forces $\ell_1,\ell_2$ to generate $\bZ/q\bZ$. We now choose $n_i^{(j)}$ in the manner specified above to define $w_i^{(j)}$ as in~\eqref{the w_i's} and note that the third coordinate of the $w_i^{(j)}$'s automatically
converges to $\frac{\ell_j}{q}$, hence the sequences $w_i^{(j)}$ converge to some limits $w_j=(*,*,\frac{\ell_j}{q})^t\in x$. The proof of part~\eqref{T4} follows.

The argument of part~\eqref{T3} of the theorem is divided into two parts. We first establish the equality $\overline{Ax_v}\setminus Ax_v =  A\Om_1\cup A\Om_2$, postponing the proof that the limit points form a nonempty set to the end.
It is clear from from parts~\eqref{T1},~\eqref{T4}, and the definition of $\Om_i$ that
$\overline{Ax_v}\setminus Ax_v\supset A\Om_1\cup A\Om_2$. To establish the opposite inclusion
let $x\in\overline{Ax_v}\setminus Ax_v$ be given. Then, there is a divergent sequence $a_n\in A$, such that $a_nx_v\to x$. From the second part of the theorem it follows that after passing to a subsequence, there is a sequence $t_n\to \infty$, such that $a_n=a_n'a^{(i)}(t_n)$, for $i=1$ or $i=2$, and $a_n'\to a\in A$. We conclude
that $a^{(i)}(t_n)x_v$ must converge to some point in $\Om_i$ (namely to $a^{-1}x$), and that $x=a\lim a^{(i)}(t_n)x_v\in A\Om_i$.

Finally, in order to finish the proof of the theorem we need to argue why
$\Om_j\ne\emptyset.$ Note that from parts~\eqref{T1} and~\eqref{T4} it follows that $$\Om_j=\set{x\in X_3: x=\lim a^{(j)}(t_i)x_v\textrm{ for some }t_i\nearrow\infty}.$$
Hence we only need to argue why the trajectories $\set{a^{(j)}(t)x_v}_{t\ge 0}$ are not divergent. We argue this for $j=1$ for example. Using the notation of the proof of part~\eqref{T4}, it is not hard to see that the projection $\pi:Se_{\Ga_3}\to X_2$ is proper. Hence, if the trajectory $\set{a^{(1)}(t)x_v}_{t\ge 0}$ is divergent, then so is its projection under $\pi$ in $X_2$, i.e.\ the trajectory $\set{a(t)x_\al}_{t\ge 0}$. It is well known that a geodesic ray in the upper half plane projects to a divergent geodesic ray in the modular surface if and only if its end point is rational. Since $\al$ is the endpoint here and it is irrational, a contradiction emerges. A slightly different argument showing that the orbit $Ax_v$ is not closed could be derived from Margulis classification of divergent $A$-orbits given in the appendix to~\cite{TW}.
\end{proof}

\begin{acknowledgments}
The results appearing in this paper are  part of the second named author's Ph.D thesis conducted at the Hebrew University of Jerusalem under the supervision of Hillel Furstenberg and Barak Weiss. We wish to thank them both as well as Hee Oh and Nimish Shah for valuable discussions, remarks and suggestions. We thank George Tomanov for communicating to us recent work on related topics and for helpful discussions. E.L. has had the good fortune of discussing these types of questions many times with Dan Rudolph; we regret that we no longer can enjoy his wisdom and kindness, and dedicate this paper to his memory.
\end{acknowledgments}


\def\cprime{$'$}
\providecommand{\bysame}{\leavevmode\hbox to3em{\hrulefill}\thinspace}
\providecommand{\MR}{\relax\ifhmode\unskip\space\fi MR }
\providecommand{\MRhref}[2]{%
  \href{http://www.ams.org/mathscinet-getitem?mr=#1}{#2}
}
\providecommand{\href}[2]{#2}

\end{document}